\definecolor{labelkey}{gray}{.8}
\definecolor{refkey}{gray}{.8}
\definecolor{darkred}{rgb}{0.9,0.1,0.1}
\definecolor{darkgreen}{rgb}{0,0.5,0}
\newtheorem{theorem}{Theorem}[section]
\newtheorem{lemma}[theorem]{Lemma}
\newtheorem{corollary}[theorem]{Corollary}
\newtheorem{proposition}[theorem]{Proposition}
\theoremstyle{remark}
\newtheorem{remark}[theorem]{Remark}
\renewenvironment{proof}[1][Proof]{ {\itshape \noindent {#1.}} }{$\Box$
\medskip}
\numberwithin{equation}{section}
\numberwithin{figure}{section}
\theoremstyle{plain}
\newcommand{\R}{\mathbb{R}}
\newcommand{\1}{\mathbbm{1}}
\newcommand{\eps}{\varepsilon}
\newcommand{\dd}{\mathrm{d}}
\newcommand{\Var}{\mathrm{Var}}
\newcommand{\Cov}{\mathrm{Cov}}
\newcommand{\Prob}{\mathbb{P}}
\newcommand{\E}{\mathbb{E}}
\newcommand{\EE}{\mathrm{E}}
\newcommand{\EEE}{\mathbf{E}}
\begin{document}
\title[Mesoscopic averaging of $2d$ KPZ]{Mesoscopic averaging of the two-dimensional KPZ equation}

\author{Ran Tao}
\address[Ran Tao]{Department of Mathematics, University of Maryland, College Park, MD, 20740.  rantao16@umd.edu}

\begin{abstract}
We study the limit of a {local average} of the KPZ equation in dimension $d=2$ with general initial data in the subcritical regime. Our result shows that a proper spatial averaging of the KPZ equation converges in distribution to the sum of the solution to a deterministic KPZ equation and a Gaussian random variable that depends solely on the scale of averaging. This shows a unique mesoscopic averaging phenomenon that is only present in dimension two. Our work is inspired by the recent findings by Chatterjee \cite{chatterjee2023weak}.
\bigskip

\noindent \textsc{Keywords:} KPZ equation, local average, directed polymer, random interface growth

\end{abstract}

\maketitle

\section{Introduction and Main result}\label{s.introduction}
\subsection{Main Result}
We are interested in the two dimensional KPZ equation driven by a mollified space-time white noise
\begin{equation}
\partial_t h^\eps (t,x) = \frac{1}{2} \Delta h^\eps (t,x) + \frac{1}{2} |\nabla h^{\eps}(t,x)|^2+ \frac{\beta}{\sqrt{\log \eps^{-1}}}\xi_\eps(t,x) - C_\eps,
\label{eq:kpz2m}
\end{equation}
with initial condition $h^\eps (0,x) = h_0(x)$ and $C_\eps =  \frac{\beta^2}{{2}\eps^2\log \eps^{-1}}\|\phi \|_{L^2(\R^2)}^2$.
Here $\beta>0$ is a constant. We define $\xi_{\eps}(t,x) = \phi^\eps * \xi (t,x)$, where $\xi$ is the space-time white noise, and $\phi \in C_c^\infty(\R^2)$ is a non-negative, smooth, symmetric mollifier with $\int{\phi} \dd x =1$ and $\phi^\eps (x) = \frac{1}{\eps^2}\phi(\eps^{-1}x)$. We use $*$ to denote convolution in space. The noise $\xi_{\eps}$ is white in time and colored in space, with spatial correlation length in the scale of $\eps$. 

Inspired by Chatterjee's recent work \cite{chatterjee2023weak}, we will investigate the limit of a \textit{local average} of the mollified KPZ equation \eqref{eq:kpz2m}  when $\beta \in (0, \sqrt{2\pi})$. We will show that when $h^{\eps}(t,x)$ is averaged in space in a proper scale, its limit would equal in distribution to the sum of the solution to a deterministic KPZ equation and a Gaussian random variable. The deterministic part only depends on the initial condition $h_0$. The Gaussian random variable only depends on the scale of averaging. Our result shows a ``mesoscopic'' averaging phenomenon that appears exclusively in dimension two.

The following theorem is the main result of the paper.

\begin{theorem}\label{thm:main}
Let $h^{\eps} (t,x)$ be the solution to the mollified 2-dimensional KPZ equation \eqref{eq:kpz2m}
with the initial condition $h_0 : \R^2 \to \R$ being bounded and Lipschitz continuous. Let $0< \beta < \sqrt{2\pi}$ and {$r_{\eps} = \eps^{1-\gamma}$} for some $0\leq \gamma \leq 1$. We use  $B(x, r_{\eps})$ to denote a ball with center $x$ and radius $r_{\eps}$.

For any fixed $t >0$, $x \in \R^2$, the local average of $h^{\eps}$ over $B(x, r_{\eps})$ converges as follows 
\begin{equation}
\begin{aligned}
    &\frac{1}{|B(x, r_\eps)|}\int_{|y-x|\leq{r_\eps}}h^{\eps} (t,y) \dd y\\
    & \xrightarrow{d}  \mathcal{N}(0, \sigma_{\gamma}^2) - \frac{1}{2}\log\left(\frac{2\pi}{2\pi - \beta^2}\right)+ \begin{cases}
\bar{h}(t,x), & \quad \text{if } 0 \leq  \gamma <1, \\
\frac{1}{|B(x, 1)|}\int_{|y-x|\leq{1}} \bar{h}(t,y) \dd y, & \quad \text{if } \gamma = 1,
\end{cases}\label{eq:limit}
\end{aligned}
\end{equation}
as $\eps \to 0$, where $\bar{h}(t,x)$ is the solution to the 2-dimensional {deterministic} KPZ equation
\begin{equation}
    \partial_t \bar{h}(t,x) = \frac{1}{2}\Delta \bar{h}(t,x) + \frac{1}{2} |\nabla \bar{h}(t,x)|^2, \quad \bar{h}(0,x) = h_0(x), \label{eq:deterkpz}
\end{equation}
and $\mathcal{N}(0, \sigma_{\gamma}^2) $ is a normal distribution with mean 0 and variance $\sigma_{\gamma}^2 = \log\left( \frac{2\pi - \beta^2\gamma}{2\pi - \beta^2}\right)$. When $0 \leq \gamma <1$, we have $\sigma_{\gamma}^2 >0$.  We treat $\mathcal{N}(0, 0)$ as constant zero, when $\gamma =1$ and $\sigma_{\gamma}^2 =0$.
\end{theorem}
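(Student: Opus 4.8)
The plan is to pass through the Cole--Hopf transform. Setting $u^\eps = e^{h^\eps}$, the mollified KPZ equation \eqref{eq:kpz2m} becomes the multiplicative (It\^o) stochastic heat equation
\begin{equation*}
\partial_t u^\eps = \tfrac12\Delta u^\eps + \tfrac{\beta}{\sqrt{\log\eps^{-1}}}\,u^\eps\,\xi_\eps, \qquad u^\eps(0,\cdot)=e^{h_0},
\end{equation*}
the renormalization $C_\eps$ being exactly the It\^o correction that cancels the potential term. Its mild solution admits the Feynman--Kac (directed polymer) representation and satisfies $\E[u^\eps(t,y)] = \heatSemi_t e^{h_0}(y) =: \bar u(t,y)$, where $\heatSemi_t$ is the heat semigroup with generator $\tfrac12\Delta$; correspondingly $\bar h := \log\bar u$ is precisely the solution of the deterministic KPZ equation \eqref{eq:deterkpz}. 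Writing $R^\eps(t,y) := u^\eps(t,y)/\bar u(t,y)$ for the normalized partition function (so $\E[R^\eps]\equiv 1$), I would decompose
\begin{equation*}
\frac{1}{|B(x,r_\eps)|}\int_{B(x,r_\eps)} h^\eps(t,y)\,\dd y = \frac{1}{|B(x,r_\eps)|}\int_{B(x,r_\eps)} \bar h(t,y)\,\dd y + \frac{1}{|B(x,r_\eps)|}\int_{B(x,r_\eps)} \log R^\eps(t,y)\,\dd y.
\end{equation*}

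The first term is deterministic: since $\bar h$ is smooth for $t>0$, when $0\le\gamma<1$ we have $r_\eps\to 0$ and the average converges to $\bar h(t,x)$, while when $\gamma=1$ we have $r_\eps\to 1$ and it converges to the unit-ball average of $\bar h(t,\cdot)$, reproducing the two cases in \eqref{eq:limit}. It then remains to prove that the fluctuation term $F_\eps := |B(x,r_\eps)|^{-1}\int_{B(x,r_\eps)}\log R^\eps(t,y)\,\dd y$ converges in distribution to $\mathcal{N}\!\left(-\tfrac12\log\tfrac{2\pi}{2\pi-\beta^2},\ \sigma_\gamma^2\right)$.

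The core is a log-normal central limit theorem for $R^\eps$ in the subcritical window $\beta<\sqrt{2\pi}$. Expanding $R^\eps(t,y)$ in Wiener chaos and using the $L^2$ estimates for the overlap of two-dimensional Brownian paths (the marginal recurrence that produces the logarithmic divergence, controlled precisely when $\beta^2<2\pi$), I would show that the first chaos $\mathcal G^\eps(t,y)$ dominates and that $R^\eps(t,y) = \exp\!\big(\mathcal G^\eps(t,y) - \tfrac12\Var \mathcal G^\eps + o(1)\big)$ with $\mathcal G^\eps$ asymptotically Gaussian and $\Var\mathcal G^\eps(t,y)\to\log\tfrac{2\pi}{2\pi-\beta^2}=:\sigma_0^2$. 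Hence $\log R^\eps(t,y)=\mathcal G^\eps(t,y)-\tfrac12\sigma_0^2+o(1)$; the constant $-\tfrac12\sigma_0^2$ is exactly the claimed shift and matches $\E[\log R^\eps]\to -\tfrac12\sigma_0^2$ forced by Jensen together with $\E[R^\eps]=1$. The remaining task is to compute the limiting variance of $|B|^{-1}\int_B \mathcal G^\eps(t,y)\,\dd y$, which equals $|B|^{-2}\iint_{B\times B}\Cov\big(\mathcal G^\eps(t,y_1),\mathcal G^\eps(t,y_2)\big)\,\dd y_1\,\dd y_2$. The key computation is that for $|y_1-y_2|=\eps^{1-a}$ with $0\le a\le 1$ the covariance converges to $\log\tfrac{2\pi-\beta^2 a}{2\pi-\beta^2}$, again a consequence of the two-dimensional intersection-local-time asymptotics (note $a=0$ returns $\sigma_0^2$ and $a=1$ gives $0$). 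Rescaling $y=x+r_\eps z$, every pair in the ball has $\log|y_1-y_2|^{-1}=(1-\gamma)\log\eps^{-1}+O(1)$, so $a\to\gamma$ uniformly and the covariance is asymptotically the constant $\log\tfrac{2\pi-\beta^2\gamma}{2\pi-\beta^2}=\sigma_\gamma^2$ over the whole domain of integration; the double integral collapses to $\sigma_\gamma^2$. This yields $F_\eps\xrightarrow{d}\mathcal{N}(-\tfrac12\sigma_0^2,\sigma_\gamma^2)$ and hence \eqref{eq:limit}.

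I expect the main obstacle to be controlling the logarithm after the spatial average: the chaos expansion gives good control of $R^\eps$ and its moments, but transferring this to $\log R^\eps$, and in particular showing that the higher chaoses and the negative tail of $R^\eps$ contribute nothing to $F_\eps$, is delicate. I would handle this by establishing uniform (in $y$ over the ball) bounds on the positive and negative moments of $R^\eps(t,y)$ in the subcritical regime, controlling the modulus of continuity of $y\mapsto R^\eps(t,y)$ to justify interchanging the average with the Gaussian approximation, and verifying the limit by the method of cumulants for $F_\eps$: showing that the second cumulant converges to $\sigma_\gamma^2$ while all higher cumulants vanish, the latter encoding that three or more two-dimensional polymer paths do not simultaneously overlap at the mesoscopic scale in the subcritical window.
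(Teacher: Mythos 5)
Your reduction to the normalized partition function $R^\eps = u^\eps/\E u^\eps$ is sound as bookkeeping (indeed $\E u^\eps(t,\cdot)=\heatSemi_t e^{h_0}$ and $\log \heatSemi_t e^{h_0}=\bar h$ is the Hopf--Cole solution of \eqref{eq:deterkpz}), but the engine you propose for the Gaussian limit --- first-chaos domination, $\log R^\eps = \mathcal G^\eps - \tfrac12\Var\mathcal G^\eps + o(1)$ --- is false in the two-dimensional subcritical regime, and it would produce the wrong variance. A direct computation (the same one as in \eqref{eq:ucov}) shows the $n$-th Wiener chaos of $R^\eps$ carries variance of order $(\beta^2/2\pi)^n$ for \emph{every} $n$, with total $\sum_{n\ge 1}(\beta^2/2\pi)^n = \tfrac{\beta^2}{2\pi-\beta^2}$: no single chaos dominates. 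Concretely, $\Var\mathcal G^\eps \to \beta^2/2\pi$, which is strictly smaller than the true limiting variance $\sigma_0^2=\log\tfrac{2\pi}{2\pi-\beta^2}$ of $\log R^\eps$, and for two points at distance $\eps^{1-\gamma}$ the first-chaos covariance tends to $(1-\gamma)\beta^2/2\pi$, not to $\sigma_\gamma^2=\log\tfrac{2\pi-\beta^2\gamma}{2\pi-\beta^2}$ (note these agree only at $\gamma=1$). So your own covariance claim for $\mathcal G^\eps$ is inconsistent with your linearization: the asymptotic log-normality of $\log R^\eps$ in $d=2$ is \emph{not} a linearization phenomenon but arises from a central limit theorem across dyadic time scales, with all chaos orders contributing --- this is exactly the content of the Caravenna--Sun--Zygouras multipoint theorem (Theorem~\ref{thm:kpzpw}), and the paper accordingly does not attempt any chaos analysis: it proves Proposition~\ref{pr:localave} by the method of moments, citing Theorem~\ref{thm:kpzpw} for the joint multiscale Gaussian limit and upgrading distributional convergence to moment convergence via uniform positive/negative moment bounds (Lemmas~\ref{le:higher}, \ref{le:negmom}, Corollary~\ref{co:logmom}) plus Wick's theorem. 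Your cumulant plan for $F_\eps$ faces the same obstruction: the vanishing of higher cumulants is equivalent to the joint Gaussianity you would have to prove, and the ``three paths don't overlap'' heuristic is the $d\ge 3$ mechanism, not the $d=2$ one.

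There is a second, independent gap: Theorem~\ref{thm:kpzpw} (and any covariance asymptotics you could cite) is stated for flat initial data $h_0=0$, whereas your $R^\eps$ carries the general $h_0$ inside it, so the multipoint limit of $\log R^\eps$ is not covered by existing results. The paper resolves this with the time-splitting decomposition \eqref{eq:decomp}: the randomness is isolated in the recent window $[t-\eps^{2a_\eps}t,\,t]$ where the partition function has flat data (so Theorem~\ref{thm:kpzpw} applies to $\mathcal I_2$), while all $h_0$-dependence sits in a ratio shown to be asymptotically deterministic by a conditional mean/variance argument (Propositions~\ref{pr:SHEdetermin}--\ref{pr:KPZdetermin}). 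Your decomposition $h^\eps = \bar h + \log R^\eps$ cleanly extracts the deterministic KPZ part, which is an attractive simplification of the first half, but without something playing the role of \eqref{eq:decomp} the second half of your argument has no theorem to stand on; repairing it would essentially mean reproving the paper's two propositions or redoing the CSZ scale-by-scale analysis with general initial data.
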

We first make the following remarks.

If $\gamma = 0$, the averaging was performed over a ball with radius $r_\eps  = \eps$. The limit in distribution is \[\bar{h}(t,x) + \mathcal{N}(0,  \log \frac{2\pi}{2\pi - \beta^2}) - \frac{1}{2}\log\left(\frac{2\pi}{2\pi - \beta^2}\right).\] If we take $h_0 =0$, then $\bar{h}(t,x) = 0$ and the limit coincides with the point-wise limit of $h^{\eps}(t,x)$ as $\eps \to 0$ obtained in \cite{caravenna2017universality} (see Theorem~\ref{thm:kpzpw} below). In fact, our result generalizes the point-wise limit of KPZ equation with flat initial data to KPZ equation with more general (bounded and Lipschitz continuous) initial conditions. The case $\gamma = 0$ can be viewed as a ``microscopic'' averaging result.

If $\gamma = 1$, the averaging was performed over a ball with radius $r_\eps =1$. As shown in \eqref{eq:limit}, the Gaussian term equals zero as $\sigma_\gamma^2 = 0$. The limit would be deterministic. It equals to a spatial average of the deterministic KPZ equation over the ball of radius 1, plus a height shift $- \frac{1}{2}\log\left(\frac{2\pi}{2\pi - \beta^2}\right)$. 

The randomness has disappeared due to the independence in the limit of $h^\eps$ for distinct points. As we will see in Theorem~\ref{thm:kpzpw} below, a result of Caravenna-Sun-Zygouras, for any finite set of distinct points $(x_i)_{1\leq i \leq n}$, the random variables $(h^{\eps}(t, x_i))_{1\leq i \leq n}$ converge to independent Gaussians as $\eps \to 0$. Thus taking a local average over a fixed-radius ball eliminates the randomness as a result of  the law of large numbers. The height shift $- \frac{1}{2}\log\left(\frac{2\pi}{2\pi - \beta^2}\right)$ is the mean of the Gaussians. The case $\gamma = 1$ can be viewed as a ``macroscopic'' averaging result.

{When $\gamma = 1$, to study the next order random fluctuations, one should look at the error $h^{\eps}(t,x) - \E h^{\eps}(t,x)$ with amplification. \cite{chatterjee2020constructing} started such studies. \cite{caravenna2020two} (as well as \cite{gu2020gaussian} for a smaller regime of $\beta$) proved that, after proper rescaling, the error converges to an Edwards-Wilkinson limit. \cite{nakajima2023fluctuations} later enhanced this result for more general initial conditions and multi-dimensional parameters. Although we both derive Gaussian limits, our work here differs from previous studies. Our study focuses on the first order term $\int_{B(0,r_\eps)} h^{\eps}(t,x) \dd x$, whereas the previous works all studied the second order fluctuations $\sqrt{\log \eps^{-1}}\int_{B(0,1)}  h^{\eps}(t,x) - \E h^{\eps}(t,x)  \dd x$. }

If $0 < \gamma <1$, the averaging was performed over a ball with radius $r_\eps = \eps^{1-\gamma}$, where $ \eps \ll r_\eps \ll 1$. The limit in distribution is the deterministic KPZ equation with the same height shift, plus a Gaussian random variable. The variance of the Gaussian, $\sigma^2_\gamma$, is dependent solely on $\gamma$. This is  an interpolation between the limiting case $\gamma = 0$ and $\gamma =1$ and is a ``mesoscopic'' averaging result.

The choice of radius $r_\eps = \eps^{1-\gamma}$ is motivated by the work of Chatterjee \cite{chatterjee2023weak} in dimensions three and higher and also by the multi-dimensional limit of $h^\eps(t,x)$ in Theorem~\ref{thm:kpzpw} below. For 
a finite collection of space-time points $( t, x_\eps^{(i)})_{1 \leq i \leq n}$, 
 $(h^\eps(t, x_\eps^{(i)}))_{1 \leq i \leq n}$ converges in joint distribution to a multi-dimensional normal distribution with
a covariance matrix depending on the power scales of  $\{ |x_\eps^{(i)} - x_\eps^{(j)}|: 1 \leq i < j \leq n\}$. When $|x_\eps^{(i)} - x_\eps^{(j)}| = \eps^{1-\gamma+o(1)}$, $\Cov(h^\eps(t, x_\eps^{(i)}), h^\eps(t, x_\eps^{(j)})) \to \sigma_\gamma^2$ as $\eps \to 0$. Since an average of Gaussians would remain a Gaussian, the limit of $h^\eps$ averaged over a ball of radius $ \eps^{1-\gamma}$ would be a Gaussian with variance $\sigma^2_\gamma$.

The ``height shift'' term $- \frac{1}{2}\log\left(\frac{2\pi}{2\pi - \beta^2}\right)$ equals to $\lim_{\eps \to 0} \E h^\eps(t, x)$ with initial condition $h_0=0$ for any $t>0$ and $x \in \R^2$. This term appears because we take logarithm in solving the KPZ equation and it is also derived in the point-wise limit in Theorem~\ref{thm:kpzpw}.

\begin{remark}\label{rm:fieldlimit}
Another interpretation of Theorem~\ref{thm:main} is to view the convergence from the perspective of generalized random fields (also known as \textit{random distribution} in the literature) in microscopic variables.
For any $(t,x) \in [0, +\infty) \times \R^2$, we consider the following microscopic equation
\begin{equation*}
\partial_t \tilde{h}^\eps(t,x) = \frac{1}{2}\Delta \tilde{h}^\eps(t,x) + \frac{1}{2} |\nabla \tilde{h}^\eps(t,x)|^2 + \frac{\beta}{\sqrt{\log \eps^{-1}}} \xi_1(t,x) - \frac{\beta^2}{2\log \eps^{-1}}\|\phi\|_{L^2(\R^2)}^2,
\label{eq:microvar}
\end{equation*}
with initial condition $\tilde{h}^\eps(0,x) = h_0(\eps x)$. The above $\xi_1$ is defined as in \eqref{eq:kpz2m} with $\eps =1$. By the scaling property of the space-time white noise,
\[
h^\eps(\cdot,\cdot) = \tilde{h}^\eps(\frac{\cdot}{\eps^2},\frac{\cdot}{\eps}) \qquad \text{jointly in law}.
\]

The proof of Theorem~\ref{thm:main} can be modified to generalize the following result:

Fix $t>0$ and $x \in \R^2$. Let $0 \leq  \gamma <1$ and $0< \beta <\sqrt{2\pi}$.  For any smooth and compactly supported test function $g \in C_c^{\infty}(\R^2)$,  
\[
  \int_{\R^2}\tilde{h}^{\eps} (\frac{t}{\eps^2},\frac{x}{\eps}+\frac{y}{\eps^\gamma}) g(y) \dd y \xrightarrow{d}  [\mathcal{N}(0, \sigma_{\gamma}^2) - \frac{1}{2}\log\left(\frac{2\pi}{2\pi - \beta^2}\right)+ \bar{h}(t,x)]\int_{\R^2}g(y)\dd y,
\]
as $\eps \to 0$.  

By \cite[Corollary 2.4]{bierme2018generalized}, as a generalized random field, $\tilde{h}^{\eps} (\frac{t}{\eps^2},\frac{x}{\eps}+\frac{\cdot}{\eps^\gamma})$ converges in law to the constant Gaussian random field $[\mathcal{N}(0, \sigma_{\gamma}^2) - \frac{1}{2}\log\left(\frac{2\pi}{2\pi - \beta^2}\right)+ \bar{h}(t,x)] \1_{\R^2}(\cdot)$. This constant Gaussian field on $\R^2$ has a fixed spatial covariance $\sigma_\gamma^2$ independent of the spatial variable. This limit is distinct from other Gaussian random field limits found in the literature on KPZ equation.
\end{remark}

One can also consider the locally averaged KPZ equation as a generalized random field on $\R^2$. Let $0 \leq \gamma < 1$. Define 
\[
\mathfrak{h}^{\eps, \gamma}(t,x) :=  \frac{1}{|B(x, r_\eps)|}\int_{|y-x|\leq{r_\eps}}h^{\eps} (t,y) \dd y, \quad \text{with }r_\eps = \eps^{1-\gamma}.
\]
The following corollary shows that, as a generalized random field, $\mathfrak{h}^{\eps, \gamma}(t,\cdot)$ converges to \added{the 2-dimensional {deterministic} KPZ limit} $\bar{h}(t, \cdot)- \frac{1}{2}\log\left(\frac{2\pi}{2\pi - \beta^2}\right)$ as $\eps \to 0$.

\begin{corollary}\label{co:localfield}
Take any smooth and compactly supported test function $g \in C_c^{\infty}(\R^2)$. For any $t >0$ and $0 \leq \gamma < 1$, as $\eps \to 0$, the random variable $\int_{\R^2}\mathfrak{h}^{\eps, \gamma}(t,x) g(x)\dd x$ converges in probability to $\int_{\R^2} \left[\bar{h}(t, x)- \frac{1}{2}\log\left(\frac{2\pi}{2\pi - \beta^2}\right)\right] g(x)\dd x$.
\end{corollary}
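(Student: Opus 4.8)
The plan is to establish convergence in probability by the second-moment (mean-and-variance) method, reducing the corollary to two pointwise covariance facts that come out of the analysis behind Theorem~\ref{thm:main}. Write
\[
Z_\eps := \int_{\R^2}\mathfrak{h}^{\eps, \gamma}(t,x) g(x)\dd x, \qquad Z := \int_{\R^2}\left[\bar{h}(t, x)- \tfrac{1}{2}\log\left(\tfrac{2\pi}{2\pi - \beta^2}\right)\right] g(x)\dd x,
\]
where $Z$ is a deterministic number. Since $\E|Z_\eps - Z|^2 = \Var(Z_\eps) + |\E Z_\eps - Z|^2$, it suffices to prove $\E Z_\eps \to Z$ and $\Var(Z_\eps)\to 0$; these give $Z_\eps \to Z$ in $L^2$ and hence in probability. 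Because $g$ is compactly supported, say in a compact set $K$, every integral below is effectively over $K$, so there are no issues at infinity, and $\bar h(t,\cdot)$ is continuous and bounded on $K$.

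For the mean, Fubini gives $\E Z_\eps = \int_{\R^2} \E[\mathfrak{h}^{\eps,\gamma}(t,x)]\,g(x)\dd x$ with $\E[\mathfrak{h}^{\eps,\gamma}(t,x)] = \frac{1}{|B(x,r_\eps)|}\int_{B(x,r_\eps)}\E[h^\eps(t,y)]\dd y$. The moment analysis underlying Theorem~\ref{thm:main} yields $\E[h^\eps(t,y)] \to \bar h(t,y) - \frac12\log\left(\frac{2\pi}{2\pi-\beta^2}\right)$ locally uniformly in $y$; since $r_\eps \to 0$, averaging over the shrinking ball $B(x,r_\eps)$ produces the same limit at $y=x$, so $\E[\mathfrak{h}^{\eps,\gamma}(t,x)]\to \bar h(t,x) - \frac12\log\left(\frac{2\pi}{2\pi-\beta^2}\right)$ pointwise in $x$. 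Combined with the uniform bound $\sup_{\eps,\, y\in K}|\E[h^\eps(t,y)]|<\infty$ (again a byproduct of the same moment estimates), dominated convergence over $K$ gives $\E Z_\eps \to Z$.

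For the variance, expanding yields
\[
\Var(Z_\eps) = \int_{\R^2}\int_{\R^2}\Cov\!\left(\mathfrak{h}^{\eps,\gamma}(t,x),\mathfrak{h}^{\eps,\gamma}(t,x')\right) g(x)g(x')\dd x\,\dd x',
\]
where the covariance is the double average of $\Cov(h^\eps(t,y),h^\eps(t,y'))$ over $y\in B(x,r_\eps)$ and $y'\in B(x',r_\eps)$. By Cauchy--Schwarz and the uniform bound $\Var(h^\eps(t,y))\le C$, the integrand is dominated by $C|g(x)||g(x')|$, which is integrable and $\eps$-independent. The key input is the two-point asymptotics used in Theorem~\ref{thm:main}: for fixed $y\ne y'$, the separation $|y-y'|$ corresponds to a scale exponent $\delta\to 1$ as $\eps\to 0$, so that $\Cov(h^\eps(t,y),h^\eps(t,y'))\to\sigma_1^2 = 0$. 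Hence for every fixed $x\ne x'$, since $r_\eps\to 0$ the two averaging balls are eventually macroscopically separated and $\Cov(\mathfrak{h}^{\eps,\gamma}(t,x),\mathfrak{h}^{\eps,\gamma}(t,x'))\to 0$. The diagonal $\{x=x'\}$ is Lebesgue-null in $\R^2\times\R^2$, so dominated convergence gives $\Var(Z_\eps)\to 0$.

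Combining the two steps, $Z_\eps\to Z$ in $L^2$, hence in probability, which is the claim. I expect the main obstacle to be the variance step, specifically justifying the decay $\Cov(h^\eps(t,y),h^\eps(t,y'))\to 0$ at macroscopic separations with enough uniformity to feed into dominated convergence; this is precisely the two-point covariance estimate established while proving Theorem~\ref{thm:main}, and the mean and uniform-variance bounds needed above are consequences of the same moment analysis.
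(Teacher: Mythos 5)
Your proposal is correct and takes essentially the same route as the paper: the paper likewise proves $L^2$ (hence in-probability) convergence by splitting $\E\big[\int(\mathfrak{h}^{\eps,\gamma}(t,x)-Z(x))g(x)\dd x\big]^2$ into a mean part, handled by uniform integrability (Corollary~\ref{co:logmom}) plus dominated convergence, and a variance part, killed by showing $\Cov\big(\tilde{\mathfrak{h}}^{\eps, \gamma}(t,x_1),\tilde{\mathfrak{h}}^{\eps, \gamma}(t,x_2)\big)\to 0$ for $x_1\neq x_2$ together with the uniform moment bound and dominated convergence. The one step you leave implicit is that for general $h_0$ the off-diagonal covariance decay is not a direct consequence of Theorem~\ref{thm:kpzpw} (which assumes $h_0=0$); the paper re-applies the decomposition \eqref{eq:decomp} to split off an almost-deterministic part (converging in $L^2$ by Proposition~\ref{pr:KPZdetermin}, hence contributing no covariance in the limit) and a zero-initial-data random part to which Theorem~\ref{thm:kpzpw} and Corollary~\ref{co:logmom} apply --- exactly the ``two-point covariance estimate established while proving Theorem~\ref{thm:main}'' that you gesture at, so this is a matter of spelling out rather than a gap.
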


\begin{remark}\label{rm:nextstep}
A natural question to ask next is to investigate the second order fluctuations of $\mathfrak{h}^{\eps, \gamma}(t,\cdot)$ after proper rescaling. Inspired by the results in \cite{caravenna2020two, gu2020gaussian} and \cite{nakajima2023fluctuations}, one may expect that $\sqrt{\log \eps^{-1}}\int_{\R^2} [\mathfrak{h}(t, x) - \E \mathfrak{h}(t, x)]g(x) \dd x$ has a non-degenerate Gaussian limiting distribution as $\eps \to 0$. We do not pursue further in this direction, as proving such results may require techniques that are beyond the scope of this paper. 
\end{remark}

We now conclude the section with a note on the significance of this study.
In Theorem~\ref{thm:main}, we proved that the local average of the KPZ equation in dimension two shows a specific mesoscopic averaging phenomenon. 
In Remark~\ref{rm:fieldlimit}, when the local average is understood in a microscopic scale, we obtained a generalized random field converge in law to a constant Gaussian random field, which is novel to the literature. 
Additionally, we showed in Corollary~\ref{co:localfield} that the locally averaged KPZ equation, as a generalized random field, converges to a deterministic KPZ limit in dimension two.
Our work studies the two-dimensional KPZ equation from a new perspective and it improves the understanding on this subject.

\subsection{Context}
On $(t,x) \in [0, +\infty) \times \R^d$, the KPZ equation is an SPDE formally given by 
\begin{equation}
    \partial_t h (t,x) = \frac{1}{2}\Delta h (t,x) + \frac{1}{2} |\nabla {h}(t,x)|^2 +  \xi (t,x). \label{eq:kpz}
\end{equation}
Here $\xi$ denotes the space-time white noise which is the distribution valued Gaussian field in spatial dimension $d$
with the covariance function  
\begin{equation*}
	\mathbb{E}\left[\xi(t,x)\xi(t',x')\right]=\delta_0(t-t')\delta_0(x-x').\nonumber\label{eq:covarianceoperator}
\end{equation*}
Here $\delta_0$ is the Dirac mass.

The KPZ equation was first introduced in 1986 by Kardar, Parisi and Zhang \cite{kardar1986dynamic} and has since become the standard random interface growth model in physics. However, mathematically, the KPZ equation is ill-posed due to the non-linear term $\nabla h$ being a generalized function, which makes the interpretation of $|\nabla h|^2$ unclear.

In 1990's, Bertini and Giacomin \cite{bertini1997stochastic} formulated the Hopf-Cole solution of \eqref{eq:kpz} in $d=1$ by a transformation $h (t,x) = \log u (t,x)$. In fact, let $u(t,x)$ be the solution to the $d$-dimensional stochastic heat equation (SHE) on $(t,x) \in [0, +\infty) \times \R^d$
\begin{equation}
\partial_t u(t,x) = \frac{1}{2}\Delta u(t,x) + u(t,x) \xi(t,x). \label{eq:she}
\end{equation}
Then at least formally, if ignoring the Itô correction, $\log u(t,x)$ satisfies \eqref{eq:kpz}.

However, the stochastic heat equation \eqref{eq:she} is only well-posed for $d=1$. For dimension $d \geq 2$,  \eqref{eq:she} does not have function or distribution valued solutions. This makes the problem of stochastic heat equation and KPZ equation in higher dimensions more challenging to solve. 

In \cite{caravenna2017universality}, Caravenna, Sun and Zygouras introduced a space-regularized equation with a scaling on the disorder strength to address the well-posedness issue in dimension $d=2$. The equation is given by
 \begin{equation}
\partial_t u^\eps (t,x) = \frac{1}{2} \Delta u^\eps (t,x) + \frac{\beta}{\sqrt{\log \eps^{-1}}}u^\eps(t,x)\xi_\eps(t,x),  \qquad u^{\eps}(0,x) = u_0(x).
\label{eq:she2m}
\end{equation}
Through the Hopf-Cole transformation $h^\eps(t,x) = \log u^\eps (t,x)$,  $h^{\eps}$ solves the mollified KPZ equation \eqref{eq:kpz2m} with $h_0 = \log u_0$.

In \cite{caravenna2017universality}, the following multi-scale point-wise asymptotic limit of $h^{\eps}(t,x)$ with initial condition $h_0(x)=0$ was derived.

\begin{theorem}[{\cite{caravenna2017universality}, Theorem 2.15, Remark 2.16}]\label{thm:kpzpw}
Let $h_0(x) = 0$. Fix $t>0$. Consider a finite collection of space-time points $( t_{\eps}, x_\eps^{(i)})_{1 \leq i \leq n}$, where $t_{\eps}>0$, $x_\eps^{(i)} \in \R^2$, such that as $\eps \to 0$, $t_\eps = \eps^{o(1)}t$ and
\[
\forall i, j \in \{1,\dots, n\}:  \quad |x_\eps^{(i)} - x_\eps^{(j)}| \leq \eps^{(1-\zeta_{i,j})+o(1)} \qquad \text{for some } \zeta_{i,j} \in [0,1].
\]

Then if $\beta \in (0, \sqrt{2\pi})$, $(h^\eps(t_\eps, x_\eps^{(i)}))_{1 \leq i \leq n}$ converges in joint distribution to the multi-dimensional normal distribution $(Y_i - \frac{1}{2}\Var[Y_i])_{1\leq i \leq n}$ as $\eps \to 0$, where $(Y_i)_{1\leq i \leq n}$ are jointly Gaussian random variables with
\[
\E[Y_i] = 0, \qquad \Cov[Y_i,Y_j] = \log\left( \frac{2\pi - \beta^2 \zeta_{i,j}}{2\pi-\beta^2}\right).
\]
If $\beta \geq \sqrt{2\pi}$, $h^\eps(t_\eps,x_\eps^{(i)})$ converges to $-\infty$ in probability for all $1\leq i \leq n$, as $\eps \to 0$.
\end{theorem}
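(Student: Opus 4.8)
The plan is to work through the Hopf--Cole transform and study $u^\eps=e^{h^\eps}$, the solution of the mollified SHE \eqref{eq:she2m} with flat data $u_0\equiv1$ (as $h_0=0$). Write $\beta_\eps:=\beta/\sqrt{\log\eps^{-1}}$ and let $g_s(y)=(2\pi s)^{-1}e^{-|y|^2/(2s)}$ be the heat kernel of $\tfrac12\Delta$. Picard iteration of the mild formulation yields the Wiener chaos expansion of $u^\eps(t,x)$, whose $k$-th term is a $k$-fold stochastic integral (with prefactor $\beta_\eps^{k}$) of a product of heat kernels over an ordered time simplex. Equivalently, by Feynman--Kac, $u^\eps(t,x)$ is the normalized partition function of a directed polymer in the environment $\xi_\eps$, so that $\E[\prod_i u^\eps(t,x_i)]$ equals an expectation over independent Brownian motions $B^{(i)}$ started at the $x_i$, weighted by $\exp\!\big(\beta_\eps^2\sum_{i<j}\int_0^t V_\eps(B^{(i)}_s-B^{(j)}_s)\,\dd s\big)$ with $V_\eps=\phi^\eps*\phi^\eps$. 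I would keep both representations available and use whichever is more convenient at each step.

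First I would compute the low moments. Because $u_0\equiv1$, $\E[u^\eps(t,x)]=1$ exactly. For the two-point function I would use the replica representation with $B^{(1)}$ started at $x_i$ and $B^{(2)}$ at $x_j$; the interaction depends only on $B^{(1)}-B^{(2)}$, started at $x_i-x_j$. As $\eps\to0$ the covariance $V_\eps$ concentrates to $\delta_0$ and the weight of a single coincidence of the two paths is the two-dimensional overlap $\beta_\eps^2\int\tfrac{\dd s}{4\pi s}\to\tfrac{\beta^2}{2\pi}$, with the integral cut off below $\eps^2$ for returns starting from coincidence and below $|x_i-x_j|^2=\eps^{2(1-\zeta_{i,j})+o(1)}$ for the first meeting of paths that start distance $\eps^{1-\zeta_{i,j}}$ apart. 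Summing the renewal/geometric series over the number of coincidences yields $\E[u^\eps(t,x_i)u^\eps(t,x_j)]\to\frac{2\pi-\beta^2\zeta_{i,j}}{2\pi-\beta^2}$ for $\beta<\sqrt{2\pi}$ (and $+\infty$ for $\beta\ge\sqrt{2\pi}$). This already pins down the target covariance: if $u^\eps(t,x_i)\xrightarrow{d}e^{Y_i-\frac12\Var Y_i}$ with $(Y_i)$ jointly centered Gaussian and $\E u^\eps\to1$, then necessarily $\Cov(Y_i,Y_j)=\lim\log\E[u^\eps(t,x_i)u^\eps(t,x_j)]=\log\frac{2\pi-\beta^2\zeta_{i,j}}{2\pi-\beta^2}$.

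The \emph{main obstacle} is the distributional convergence itself, because the log-normal law is not determined by its moments -- indeed $\E[(u^\eps)^p]$ diverges for $p$ large even when $\beta<\sqrt{2\pi}$ -- so the method of moments is unavailable and a genuine central limit argument is required. I would obtain it from the martingale $M^{(i)}_s:=\E[u^\eps(t,x_i)\mid\filt_s]=\int g_{t-s}(x_i-y)u^\eps(s,y)\,\dd y$, which interpolates $M^{(i)}_0=1$ and $M^{(i)}_t=u^\eps(t,x_i)$. Itô's formula for the logarithm gives $h^\eps(t,x_i)=\log M^{(i)}_t=N^{(i)}_t-\tfrac12\langle N^{(i)}\rangle_t$, where $N^{(i)}$ is a continuous martingale whose brackets are
\[
\dd\langle N^{(i)},N^{(j)}\rangle_s=\frac{\beta_\eps^2}{M^{(i)}_sM^{(j)}_s}\iint g_{t-s}(x_i-y)\,g_{t-s}(x_j-y')\,u^\eps(s,y)\,u^\eps(s,y')\,V_\eps(y-y')\,\dd y\,\dd y'\,\dd s .
\]
The crux is to show that the bracket matrix concentrates on the \emph{deterministic} limit $\langle N^{(i)},N^{(j)}\rangle_t\to\log\frac{2\pi-\beta^2\zeta_{i,j}}{2\pi-\beta^2}$ in probability; I would establish this by computing the mean (the same overlap/renewal computation as for the second moment, after controlling the denominators via $L^2$-bounds on $u^\eps$) and by showing the variance vanishes through fourth-moment/chaos estimates. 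The mechanism behind the concentration is that in two dimensions the fluctuations are spread over the $\sim\log\eps^{-1}$ logarithmic scales, which are asymptotically independent, so a law of large numbers across scales renders the bracket deterministic -- this is precisely what distinguishes the two-dimensional marginal case from the genuinely random weak-disorder limits in $d\ge3$. Granting the concentration, the multivariate martingale central limit theorem gives $(N^{(i)}_t)_i\xrightarrow{d}(Y_i)_i$ jointly Gaussian with the stated covariance, and the deterministic drift $-\tfrac12\langle N^{(i)}\rangle_t\to-\tfrac12\Var Y_i$ supplies the recentering, so $(h^\eps(t_\eps,x_\eps^{(i)}))_i\xrightarrow{d}(Y_i-\tfrac12\Var Y_i)_i$. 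The rescaling $t_\eps=\eps^{o(1)}t$ only moves the upper cutoff of the logarithmic integrals by a sub-polynomial factor and does not affect the limit.

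Finally, for $\beta\ge\sqrt{2\pi}$ the same bracket computation gives $\langle N^{(i)}\rangle_t\to+\infty$, so heuristically $h^\eps=N^{(i)}_t-\tfrac12\langle N^{(i)}\rangle_t\approx\sqrt{\langle N^{(i)}\rangle_t}\,Z-\tfrac12\langle N^{(i)}\rangle_t$ with $Z$ standard Gaussian, and the negative drift dominates the fluctuation, forcing $h^\eps(t_\eps,x_\eps^{(i)})\to-\infty$ in probability (equivalently the positive mean-one martingale $M^{(i)}$ collapses, $u^\eps\to0$). I would make this rigorous via a sub-linear moment bound, showing $\E[(u^\eps)^\theta]\to0$ for some $\theta\in(0,1)$, which yields the second assertion.
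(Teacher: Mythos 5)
The paper does not prove Theorem~\ref{thm:kpzpw} at all: it is imported verbatim from \cite{caravenna2017universality}, so your attempt has to be measured against the proof given there. Much of your setup is correct as far as it goes: the martingale $M^{(i)}_s=\E[u^\eps(t,x_i)\mid\filt_s]$, the It\^o decomposition $h^\eps(t,x_i)=N^{(i)}_t-\frac{1}{2}\langle N^{(i)}\rangle_t$, the bracket formula, the renewal/overlap computation giving $\E[u^\eps(t,x_i)u^\eps(t,x_j)]\to\frac{2\pi-\beta^2\zeta_{i,j}}{2\pi-\beta^2}$, the observation that the method of moments is unavailable for a log-normal limit, and the reduction of the supercritical case to a fractional-moment bound $\E[(u^\eps)^\theta]\to 0$ are all sound.

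The genuine gap is at the step you yourself identify as the crux: concentration of the bracket matrix, which you propose to obtain by ``computing the mean and showing the variance vanishes through fourth-moment/chaos estimates.'' The variance of $\langle N^{(i)},N^{(j)}\rangle_t$ involves products of four copies of $u^\eps$ (besides negative powers of the $M^{(i)}_s$, which need left-tail concentration estimates of the type of Lemma~\ref{le:negmom}), and uniform-in-$\eps$ bounds $\sup_\eps\|u^\eps\|_{L^p(\Omega)}<\infty$ hold only for $p<p_\beta=1+2\pi/\beta^2$ (hypercontractivity; cf.\ Lemma~\ref{le:higher} of this paper). Since $p_\beta\le 4$ as soon as $\beta^2\ge 2\pi/3$, fourth moments diverge on a nontrivial portion of the subcritical window, so your argument as written can at best prove the theorem for $\beta<\sqrt{2\pi/3}$, while the statement claims the full range $\beta\in(0,\sqrt{2\pi})$. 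This is not a technicality: bracket concentration in the entire subcritical regime is of the same nature and difficulty as the Edwards--Wilkinson-type results of \cite{caravenna2020two}, which required substantially more than moment computations, and the martingale/forward-backward route you sketch is known in the literature (\cite{dunlap2022forward}) and is restricted to small coupling for precisely this reason. The proof in \cite{caravenna2017universality} avoids higher moments altogether: it splits the time horizon $[0,\eps^{-2}t]$ into $O(\log\eps^{-1})$ geometric blocks, shows the partition function asymptotically factorizes into independent block contributions, each carrying effective disorder of strength $O(1/\log\eps^{-1})$, and applies a Lindeberg-type central limit theorem to $\log u^\eps$ written as a triangular array of small independent summands; only second-moment control per block is needed, which is what makes the whole regime $\beta<\sqrt{2\pi}$ accessible, and the covariance $\log\frac{2\pi-\beta^2\zeta_{i,j}}{2\pi-\beta^2}$ emerges as the integral of the per-scale variance density $\frac{\beta^2}{2\pi-\beta^2 a}\,\dd a$ over the fraction $a\in[\zeta_{i,j},1]$ of scales shared by $x^{(i)}_\eps$ and $x^{(j)}_\eps$. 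To repair your proof you would either have to restrict $\beta$, or replace the fourth-moment step by such a multiscale factorization --- at which point you would essentially be reproducing the argument of \cite{caravenna2017universality}.
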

If $\beta \in (0, \sqrt{2\pi})$, $\Var [Y_i] = \log \left(\frac{2\pi}{2\pi-\beta^2}\right)$ as $\zeta_{i,i} = 0$ for any $1 \leq i \leq n$. For any finite set of distinct points $(x_i)_{1\leq i \leq n}$, the random variables $(h^{\eps}(t_\eps, x_i))_{1\leq i \leq n}$ converge to independent Gaussians as $\eps \to 0$. This is because
 $\zeta_{i,j} = 1$ for all $1 \leq i, j \leq n$ with $i \neq j$, implying independence in the limit. 

\begin{remark}
{The value $\beta_c = \sqrt{2\pi}$ is critical here as there is a phase transition in Theorem~\ref{thm:kpzpw}. The interval $(0, \sqrt{2\pi})$ is known as the \textit{subcritical regime} for 2-dimensional KPZ equation.} In our paper, we will only focus on this regime. The research on the 2-dimensional SHE \eqref{eq:she} in a critical window around $\beta_c$ initiated with \cite{bertini1998two}, and notable recent advancements have been made in \cite{caravenna2019dickman, caravenna2023critical, caravenna2023critical2, caravenna2023quasi, gu2021moments, caravenna2019moments}.
\end{remark}

The directed polymer model in the random environment in $2+1$ dimension is related to the two-dimensional KPZ equation. The solution to the KPZ equation equals in law to the logarithm of the partition function of a \replaced{continuum }{continuous }directed polymer. (See \eqref{eq:kpzfk} below.)

In a prior work, Chatterjee \cite{chatterjee2023weak} proved that a growing random surface generated by discrete directed polymers in $d \geq 3$ converges to a deterministic KPZ equation. It is an analogue of our Theorem~\ref{thm:main} for directed polymers in dimensions $d+1$ with $d \geq 3$. The same result as in  \cite{chatterjee2023weak} is expected to hold in the \replaced{continuum }{continuous }setting, i.e. for the analogue KPZ equation in dimension $d\geq 3$ defined as
\[
\partial_t \hat{h}_\eps(t,x) = \frac{1}{2}\Delta \hat{h}_\eps(t,x)+\frac{1}{2}|\nabla \hat{h}_\eps(t,x)|^2 + \beta{\eps^{\frac{d-2}{2}}}\hat{\xi}_\eps(t,x) -  \frac{\beta^2}{{2}\eps^2}\|\hat{\phi} \|_{L^2(\R^d)}^2, \quad \hat{h}_\eps(0,x) = \hat{h}_0(x),
\]
where $\hat{\xi}_\eps = \xi(t,x) * \hat{\phi}^\eps$ is the space-time white noise in dimension $d+1$ spatially smoothened at scale $\eps$, $\hat{\phi} \in C_c^\infty(\R^d)$ is a non-negative, smooth, symmetric mollifier with $\int{\hat{\phi}} \dd x =1$ and $\hat{\phi}^\eps (x) = \frac{1}{\eps^d}\hat{\phi}(\eps^{-1}x)$, $\hat{h}_0$ is bounded and Lipschitz continuous, and $\beta$ is sufficiently small.

We note that there is a significant difference between the $d=2$ KPZ equation and the $d \geq 3$ KPZ equation. In dimension $d \geq 3$, there is no ``mesoscopic'' averaging phenomenon. In fact, in $d\geq 3$, by \cite[Theorem 2.2]{chatterjee2023weak}, we expect that, by taking the average of $\hat{h}_\eps(t,x)$ over a ball $B(x,r_\eps)$ with radius $\eps \ll r_\eps \ll 1$, the limit would be the corresponding deterministic KPZ equation with a height shift. The ``mesoscopic'' averaging result (when $ \eps \ll r_\eps \ll 1$ in our Theorem~\ref{thm:main}) is exclusive to dimension $d=2$.

The difference arises as follows. 
For $d \geq 3$, if $x_\eps^{(1)}, x_\eps^{(2)}\in \R^d$ and $\eps \ll |x_\eps^{(1)}-x_\eps^{(2)}| \ll 1$, then as $\eps \to 0$, $\hat{h}_\eps(t, x_\eps^{(1)})$ and $\hat{h}_\eps(t, x_\eps^{(2)})$ always become asymptotically independent, regardless of the scale of $|x_\eps^{(1)}-x_\eps^{(2)}|$. Taking a spatial average over a ball with radius $\eps \ll r_\eps \ll 1$ eliminates the ``randomness'' due to the law of large numbers. However, in dimension $d=2$, as noted in Theorem~\ref{thm:kpzpw}, when $\zeta_{i,j} \neq 0$, there is a nontrivial multi-scale correlation in the limit of $(h^{\eps}(t,x_i))_{1\leq i \leq n}$. 

We shall finish this section by mentioning some other relevant recent works. 
The second order fluctuations of KPZ equation in dimension $d=2$, i.e. the limit of $\sqrt{\log \eps^{-1}}[h^{\eps}(t,\cdot) - \E h^{\eps}(t,\cdot)]$ as $\eps \to 0$, was studied in the aforementioned works \cite{chatterjee2020constructing, caravenna2020two, gu2020gaussian, nakajima2023fluctuations}.
The second order fluctuations of SHE and nonlinear SHE in $d=2$ were studied in \cite{caravenna2017universality, nakajima2023fluctuations} and \cite{tao2022gaussian} respectively. \cite{gabriel2021central} studied the macroscopic-level limit of the polymer paths in dimension $2+1$. In $d \geq 3$, the point-wise limit of SHE and KPZ equation were studied in \cite{mukherjee2016weak, comets2020renormalizing}. The second order fluctuations of SHE and KPZ equation were studied in \cite{comets2019space, cosco2022law, cosco2021gaussian, dunlap2020fluctuations,  gu2018edwards, lygkonis2022edwards, magnen2018scaling}. The second order fluctuations of nonlinear SHE was studied in \cite{gu2020nonlinear}.

Studies of the deterministic KPZ equation and deterministically growing surfaces were recently conducted in \cite{chatterjee2022convergence, chatterjee2021universality}. By dropping the random noise from the environment, the deterministic KPZ equation is a much simpler object and is well-defined in all dimensions. The hope of such studies was that it could lead to an understanding of the universal nature of KPZ growth with noise in the dimensions greater than one. 

\subsection{Sketch of proof}
Before beginning the proof of Theorem~\ref{thm:main}, let us first outline its central idea. 

To study $h^{\eps}$, we analyze $u^{\eps} = e^{h^{\eps}}$. To do so, we employ the Feynman-Kac formula from \cite{bertini1995stochastic}, which can be easily adapted to any dimension.

Let $0 < \inf_{x\in \R^2} u_0(x) \leq \sup_{x\in \R^2} u_0(x) < \infty$, which is equivalent to the requirement that $h_0(x)$ is bounded. The Feynman-Kac formula states that the solution to the 2-dimensional stochastic heat equation \eqref{eq:she2m} is given by:
\begin{equation}
 u^\eps(t,x)=
\EE_x\left[u_0(B_{t}) \exp \left\{\beta_\eps \int_0^t \xi_{\eps}(t-r, B_r) \dd r -\frac{\beta_{\eps}^2}{2}  \E\left[\left(\int_0^t \xi_{\eps}(t-r, B_r) \dd r \right)^2\right] \right\}\right],
\label{eq:fk1}
\end{equation}
where $\beta_{\eps} := \frac{\beta}{\sqrt{\log \eps^{-1}}}$, $\EE_x$ is the expectation w.r.t. $(B_r)_{r\geq 0}$, $(B_r)_{r\geq 0}$ is a standard Brownian motion in $\R^2$ starting from $B_0=x$ and $\E$ denotes the expectation with respect to the space-time white noise in the environment. Later, we will also make use of the Brownian bridge from $(0, x)$ to $(s, y)$ with $s >0$ and $x, y \in \R^2$. We denote the expectation w.r.t. such Brownian bridge by $\EE_{0, x}^{s, y}$.

By a time reversal in $\xi_{\eps}$, a scaling property of the space-time white noise in dimension {${d=2}$}, and the scaling invariance of {Brownian} motion ($\eps B_{{\eps^{-2}}r} \stackrel{law}{=} B_r $), we find that $\{u^{\eps}(t,x)\}_{x \in \R^2}$ in \eqref{eq:fk1} has the same joint distribution as $\{\tilde{u}^{\eps}(t,x)\}_{x \in \R^2}$, where
\begin{equation*}
\begin{aligned}
 \tilde{u}^\eps(t,x)&=
\EE_x\Big[u_0(B_t)\exp \Big\{\beta_{\eps}\int_0^t \xi_{\eps}(r, B_r) \dd r -\frac{\beta_{\eps}^2}{2} \E\big(\int_0^t \xi_{\eps}(r, B_r) \dd r \big)^2 \Big\}\Big]\\
&=
\EE_x\Big[u_0(B_t)\exp \Big\{\beta_{\eps}\int_0^t  \int_{\R^2} \phi^{\eps}(B_r-y)\xi(r, y) \dd y\dd r -\frac{\beta_{\eps}^2}{2} t \|\phi^{\eps}\|_{L^2(\R^2)}^2\Big\}\Big]\\
&= \EE_{\eps^{-1}x}\Big[u_0(\eps B_{\eps^{-2}t})\exp \Big\{\beta_{\eps}\int_0^{\eps^{-2}t} \int_{\R^2} \phi(B_{\tilde{r}}-\tilde{y})\tilde{\xi}(\tilde{r}, \tilde{y}) \dd \tilde{y}\dd \tilde{r}-\frac{\beta_{\eps}^2}{2} \eps^{-2}t \|\phi\|_{L^2(\R^2)}^2\Big\}\Big].
\end{aligned}\label{eq:fk2}
\end{equation*}
In the last step, we made the change of variable $(\eps \tilde{y}, \eps^2 \tilde{r}) := (y, r)$. The scaling property of the space-time white noise implies that
\[
\tilde{\xi}(\tilde{r}, \tilde{y}) \dd \tilde{y}\dd \tilde{r} := \eps^{-2} {\xi}(\eps^2\tilde{r}, \eps\tilde{y}) \dd (\eps\tilde{y}) \dd (\eps^2 \tilde{r})
\]
is another space-time white noise in $\R^2$.

Now since $h^{\eps} = \log{u^{\eps}}$ and $h_0 = \log u_0$, we have 
\begin{equation}
\begin{aligned}
 h^\eps(t,x)&\stackrel{law}{=} 
\log \EE_{\eps^{-1}x}\Big[\exp \Big\{h_0(\eps B_{\eps^{-2}t})+\beta_\eps\int_0^{\eps^{-2}t} \int_{\R^2} \phi(B_r-y)\xi(r, y) \dd y \dd r
-\frac{1}{2}\beta_\eps^2 \eps^{-2}t \|\phi\|_{L^2(\R^2)}^2\Big\}\Big].
\end{aligned}\label{eq:fkkpz}
\end{equation}
For any $s \geq 0$ and $f \in {C}(\R^2)$ being bounded and Lipschitz continuous, define
\begin{equation}
\begin{aligned}
\Psi_{s}^{\beta_{\eps}, f} &=\Psi_{s}^{\beta_{\eps}, f} (B, \xi) = \exp \left[f (\eps B_{s}) + \beta_{\eps}\int_0^s \int_{\R^2} \phi(y-B_r)\xi(r, y) \dd y \dd r-\frac{1}{2}\beta_{\eps}^2 \|\phi\|_{L^2}^2s\right].
\end{aligned}\label{eq:psi}
\end{equation} 
Then \eqref{eq:fkkpz} can be rewritten as
\begin{equation}
h^\eps(t,x)\stackrel{law}{=} \log \EE_{\eps^{-1}x}\Big[\Psi_{\eps^{-2}t}^{\beta_{\eps}, h_0}\Big].
\label{eq:kpzfk}
\end{equation}

In the following sections, in order to prove Theorem~\ref{thm:main} for $h^\eps(t,x)$, we analyze the limit of $\log \EE_{\eps^{-1}x}\Big[\Psi_{\eps^{-2}t}^{\beta_{\eps}, h_0}\Big]$ as $\eps \to 0$.

The main idea behind the proof for Theorem~\ref{thm:main} is the following decomposition:
\begin{equation}
\log \EE_{\eps^{-1}x}\big[\Psi_{\eps^{-2}t}^{\beta_{\eps}, h_0}\big] = \log \frac{ \EE_{\eps^{-1}x}\big[\Psi_{\eps^{-2}t}^{\beta_{\eps}, h_0}\big]}{\EE_{\eps^{-1}x}\big[\Psi_{\eps^{-2(1-a_\eps)}t}^{\beta_{\eps}, 0}\big]} + \log \EE_{\eps^{-1}x}\big[\Psi_{\eps^{-2(1-a_\eps)}t}^{\beta_{\eps}, 0}\big]
\label{eq:decomp}
\end{equation}
for some $a_{\eps}=o(1)$ satisfying $\eps^{2a_{\eps}}=o(1)$ that we will define later. 

This decomposition is based on the following observation: the random part in the limit of $h^{\eps}(t,x)$ depends only on the white noise $\xi$ in an infinitesimal time window $[t-o(1),t]$ as $\eps \to 0$, and it is independent of the initial condition $h_0$. Thus, we can split $h^{\eps}(t,x)$ into two components: a ``random'' quantity which is the solution to the KPZ equation on time window $[t-o(1),t]$ with zero initial condition, and an ``almost deterministic'' quantity that concentrates to a deterministic value when $\eps \to 0$. The right-hand-side of \eqref{eq:decomp} demonstrates this decomposition, with the first term being the ``almost deterministic'' quantity and the second term being the ``random'' quantity.

The remaining proof consists of two parts. In the first part, we show that the  ``almost deterministic'' quantity converges to the solution of the deterministic KPZ equation with original initial data $h_0$. This is proved in Proposition~\ref{pr:KPZdetermin} below, with the assistance of Proposition~\ref{pr:SHEdetermin}. In the second part, we prove that the local average of the ``random'' quantity converges in law to the Gaussian random variable $\mathcal{N}(0, \sigma_{\gamma}^2) - \frac{1}{2}\log\left(\frac{2\pi}{2\pi - \beta^2}\right)$. This is shown in Proposition~\ref{pr:localave}.

We should note that the idea of this decomposition was previously mentioned in \cite{caravenna2017universality} Remark 2.18 for 2-dimensional SHE with general initial conditions. It has since been utilized in various recent works, including \cite{caravenna2020two, dunlap2022forward, lygkonis2022edwards}, with a particular emphasis in \cite{chatterjee2023weak}.

\subsection*{Acknowledgement}
The author would like to thank their advisor, Yu Gu, for suggesting this problem and providing guidance, and would like to thank Sourav Chatterjee for the suggestion on adding Corollary~\ref{co:localfield}. \added{The author would also like to thank two anonymous referees for multiple constructive suggestions which helped
to improve the presentation. }This work is supported by Yu Gu’s
NSF grant DMS-2203014.

\subsection*{Data Availability} Not applicable.
\subsection*{Declarations} Conflict of interest: The author declares that they have no conflict of interest.

\section{Moment bounds}
In this section, we will discuss some properties of the random variables $\EE_{\eps^{-1}x}\Big[\Psi_{s}^{\beta_{\eps}, 0}\Big]$, $\EE_{0, {\eps}^{-1}x}^{s, {\eps}^{-1}y}\Big[\Psi_{s}^{\beta_{\eps}, 0}\Big]$ and $ \log \EE_{\eps^{-1}x}\Big[\Psi_{s}^{\beta_\eps, 0}\Big]$, where $\Psi_{s}^{\beta_{\eps}, 0}$ is defined as in \eqref{eq:psi} with $f$ being a zero function. These properties will be used in the proof of Theorem~\ref{thm:main} later.

Hereafter, for any $p>0$, we use the notation $\| \cdot \|_{L^p(\Omega)}$ to denote the $L^p(\Omega)$ norm of the probability space $(\Omega, \mathscr{F}, \Prob)$ where the space-time white noise $\xi$ is built on.

\subsection{Second moments}
We will first show that $\EE_{\eps^{-1}x}\Big[\Psi_{s}^{\beta_{\eps}, 0}\Big]$ and $\EE_{0, {\eps}^{-1}x}^{s, {\eps}^{-1}y}\Big[\Psi_{s}^{\beta_{\eps}, 0}\Big]$ have bounded second moments.
Let 
\begin{equation}\label{eq:defofV}
V(x) = \int_{\R^2}\phi(x-y)\phi(y)\dd y.
\end{equation}

We first remark that for any $x, y \in \R^2$,
\begin{equation}
\begin{aligned}
&\E\Big(\EE_{\eps^{-1}x}\Big[\Psi_{s}^{\beta_{\eps}, 0}\Big]\EE_{\eps^{-1}y}\Big[\Psi_{s}^{\beta_{\eps}, 0}\Big]\Big) \\&= \EE_{\eps^{-1}x}\otimes \EE_{\eps^{-1}y} \otimes \E\Big(\exp \big[\beta_{\eps}\int_0^s \int_{\R^2} [\phi(y-B^1_r)+ \phi(y-B^2_r)]\xi(r, y) \dd y \dd r -\beta_{\eps}^2 \|\phi\|_{L^2}^2s\big]\Big)\\
&= \EE_{\eps^{-1}x}\otimes \EE_{\eps^{-1}y}\Big(\exp \big[\frac{\beta_{\eps}^2}{2}\int_0^s \int_{\R^2} [\phi(y-B^1_r)+ \phi(y-B^2_r)]^2 \dd y \dd r -\beta_{\eps}^2 \|\phi\|_{L^2}^2s\big]\Big)\\
&= \EE_{\eps^{-1}x}\otimes \EE_{\eps^{-1}y}\Big(\exp \big[\beta_{\eps}^2 \int_0^s \int_{\R^2} \phi(y-B^1_r)\phi(y-B^2_r) \dd y \dd r \big]\Big)\\
&=  \EE_{\eps^{-1}x}\otimes \EE_{\eps^{-1}y}\Big(\exp \big[\beta_{\eps}^{2} \int_0^s V(B_r^1-B_r^2) \dd r \big]\Big)\\
&=  \EE_{\eps^{-1}\frac{x-y}{\sqrt{2}}}\Big(\exp \big[\beta_{\eps}^2 \int_0^s V(\sqrt{2}B_r) \dd r \big]\Big)\\
&= 1 + \sum_{n=1}^{\infty}\beta_{\eps}^{2n} \int_{0 < s_1 < \dots < s_n<s}\int_{\R^{2n}}\prod_{i=1}^n V(\sqrt{2} x_i)\rho_{s_i - s_{i-1}}(x_{i-1}, x_i) \dd s_1 \dots \dd s_n \dd x_1 \dots \dd x_n
\end{aligned}\label{eq:ucov}
\end{equation}
where we set $x_0 = \eps^{-1}\frac{x-y}{\sqrt{2}}$ \replaced{, }{and }$s_0 = 0$ \replaced{and }{. We} let $\rho_t(x) = (2\pi t)^{-1}e^{-\frac{|x|^2}{2t}}$ be the heat kernel in $d=2$ \replaced{such that }{and }$\rho_t(x,y) := \rho_t(x-y)$. We use $\EE_x\otimes \EE_y$ to denote the expectation in two independent Brownian motions $B^1$ and $B^2$ starting from $x$ and $y$ respectively.

Since
\begin{equation}
\sup_{x\in \R^2} \int_{\R^2} V(\sqrt{2}y)\rho_r(x,y)\dd y \leq \frac{1}{4\pi r} \wedge \|V\|_{\infty},
\end{equation}
where $\|V\|_\infty$ denotes the supremum norm of $V$ on $\R^2$,
\eqref{eq:ucov} is bounded from above by
\[
1+ \sum_{n=1}^{\infty}\beta_{\eps}^{2n} \Big( \int_0^s  \frac{1}{4\pi r} \wedge \|V\|_{\infty}\dd r \Big)^n = 1 + \sum_{n=1}^{\infty}\beta_{\eps}^{2n} \Big( \frac{\log 4\pi \|V\|_{\infty}s}{4 \pi} +\frac{1}{4\pi} \Big)^n.
\]
Now for any fixed $t>0$, we have that as $\eps \to 0$,
\[
\beta_{\eps}^2\Big( \frac{\log 4\pi \|V\|_{\infty}\eps^{-2}t}{4 \pi} +\frac{1}{4\pi}\Big) = \frac{\beta^2}{\log \eps^{-1}} \Big(\frac{\log \eps^{-2}}{4\pi} + \frac{1+\log 4\pi \|V\|_{\infty}t}{4 \pi} \Big) \to \frac{\beta^2}{2\pi}.
\]
Hence when $s \leq \eps^{-2}t$ and $0 < \beta < \sqrt{2\pi}$ , we have
\begin{equation}
\limsup_{\eps \to 0} \E\Big(\EE_{\eps^{-1}x}\Big[\Psi_{s}^{\beta_{\eps}, 0}\Big]\EE_{\eps^{-1}y}\Big[\Psi_{s}^{\beta_{\eps}, 0}\Big]\Big) \leq \sum_{n=0}^{\infty} \Big(\frac{\beta^2}{2\pi}\Big)^n \leq \frac{2\pi}{2\pi-\beta^2}.
\label{eq:general2ndmomentbound}
\end{equation}

We also have a second moment bound for the expectation w.r.t. Brownian bridge.
\begin{lemma}
Let $0 < \beta < \sqrt{2\pi}$, $t>0$ and $x, y \in \R^2$. There exists $C_{\beta} < \infty$ independent of $t, x, y$, such that
\begin{equation}\label{eq:2ndmombb}
\sup_{\eps \leq 1}\sup_{s \leq  \eps^{-2}t} \left\|\EE_{0, {\eps}^{-1}x}^{s, {\eps}^{-1}y}\left[\Psi_s^{\beta_{\eps}, 0}\right]\right\|_{L^2(\Omega)}< C_{\beta}.
\end{equation}
\label{le:2ndmombb}
\end{lemma}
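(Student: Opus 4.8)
The plan is to expand the squared $L^2(\Omega)$ norm, integrate out the white noise exactly, reduce the resulting object to an exponential moment of a single Brownian bridge that no longer sees $x,y$, and then bound that moment by a chaos expansion parallel to \eqref{eq:ucov}, with one extra induction needed to recover the sharp threshold $\sqrt{2\pi}$.

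First, introducing two independent bridges $B^1,B^2$ from $\eps^{-1}x$ to $\eps^{-1}y$ and performing the Gaussian (Wick) integration over $\xi$ exactly as in the first lines of \eqref{eq:ucov}, I would obtain
\[
\Big\|\EE_{0,\eps^{-1}x}^{s,\eps^{-1}y}[\Psi_s^{\beta_{\eps},0}]\Big\|_{L^2(\Omega)}^2
=\EE_{0,\eps^{-1}x}^{s,\eps^{-1}y}\otimes\EE_{0,\eps^{-1}x}^{s,\eps^{-1}y}\Big[\exp\Big(\beta_{\eps}^2\int_0^s V(B^1_r-B^2_r)\,\dd r\Big)\Big].
\]
The key point is that the difference $B^1-B^2$ is, in law, $\sqrt2$ times a Brownian bridge $W$ from $0$ to $0$ on $[0,s]$, and this law does not depend on the endpoints, since the deterministic drifts cancel. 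Hence the right-hand side equals $\EE_{0,0}^{s,0}[\exp(\beta_{\eps}^2\int_0^s V(\sqrt2 W_r)\,\dd r)]$, which is already uniform in $x,y$, and it suffices to bound this single-bridge moment uniformly for $s\le\eps^{-2}t$.

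Next I would expand the exponential as $\sum_{n\ge0}\beta_{\eps}^{2n}I_n$ with $I_n=\int_{0<r_1<\dots<r_n<s}\EE_{0,0}^{s,0}[\prod_{i=1}^n V(\sqrt2 W_{r_i})]\,\dd\vec{r}$. Writing the joint law of $(W_{r_1},\dots,W_{r_n})$ as a product of heat kernels times the return kernel $\rho_{s-r_n}(w_n,0)$ divided by $\rho_s(0,0)$, I would integrate the spatial variables from the right using the Gaussian multiplication identity $\rho_a(u,w)\rho_b(w,v)=\rho_{a+b}(u,v)\rho_{ab/(a+b)}(w,\cdot)$. At each step $\int_{\R^2}V(\sqrt2 w)\rho_\sigma(w,\cdot)\,\dd w\le\frac{1}{4\pi\sigma}\wedge\|V\|_\infty=:f(\sigma)$ (the same estimate used in \eqref{eq:ucov}) produces a factor $f(\sigma_i)$ and leaves a return kernel with its time index lowered by one; after all $n$ integrations the leftover kernel is exactly $\rho_s(0,0)$, cancelling the normalization. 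This gives
\[
\EE_{0,0}^{s,0}\Big[\prod_{i=1}^n V(\sqrt2 W_{r_i})\Big]\le\prod_{i=1}^n f(\sigma_i),\qquad \sigma_i=\frac{(r_i-r_{i-1})(s-r_i)}{s-r_{i-1}}\quad(r_0=0),
\]
so that $I_n\le\int_{0<r_1<\dots<r_n<s}\prod_{i=1}^n f(\sigma_i)\,\dd\vec{r}$.

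The hard part is this last time integral, and it is where the sharp constant is decided. Because $\tfrac1{\sigma_i}=\tfrac1{r_i-r_{i-1}}+\tfrac1{s-r_i}$, each $\sigma_i$ carries a logarithmic singularity not only from the gap $r_i-r_{i-1}$ (as in the free case) but also from the right endpoint $r_i\to s$; estimating both on the same footing gives a geometric series with ratio $2\beta_{\eps}^2 A(s)$, where $A(L):=\int_0^L f=\frac1{4\pi}(1+\log(4\pi\|V\|_\infty L))$, and this converges only for $\beta<\sqrt\pi$ — the wrong threshold. To see that the endpoint singularity is in fact subleading I would substitute $u_i=(s-r_i)/(s-r_{i-1})\in(0,1)$, whose Jacobian turns the bound into $I_n\le Q_n(s)$ with $Q_n(L)=\int_0^1 f(Lu(1-u))L\,Q_{n-1}(Lu)\,\dd u$ and $Q_0\equiv1$, a renewal recursion. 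Using $\tfrac1{Lu(1-u)}=\tfrac1{Lu}+\tfrac1{L(1-u)}$ together with the subadditivity of $x\mapsto x\wedge\|V\|_\infty$, the ``bubble'' part ($u\to1$) contributes $A(L)^n$, while the ``endpoint'' part ($u\to0$), after $\sigma=Lu$ and using $A'=f$, becomes $\int_0^L A'(\sigma)A(\sigma)^{n-1}\,\dd\sigma=A(L)^n/n$; this lets me prove by induction the sharp bound $Q_n(L)\le(n+1)A(L)^n$. Summing,
\[
\Big\|\EE_{0,\eps^{-1}x}^{s,\eps^{-1}y}[\Psi_s^{\beta_{\eps},0}]\Big\|_{L^2(\Omega)}^2\le\sum_{n\ge0}(n+1)\big(\beta_{\eps}^2 A(s)\big)^n=\frac{1}{\big(1-\beta_{\eps}^2 A(s)\big)^2},
\]
and since $\beta_{\eps}^2 A(\eps^{-2}t)\to\beta^2/2\pi<1$ as $\eps\to0$ for $\beta<\sqrt{2\pi}$ (with $A$ increasing, so $s=\eps^{-2}t$ is the worst case), the right-hand side is bounded uniformly, giving the finite constant $C_\beta$. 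The genuine obstacle is thus the induction $Q_n\le(n+1)A^n$, which isolates the harmless endpoint logarithm and recovers the critical exponent $\sqrt{2\pi}$ rather than $\sqrt\pi$.
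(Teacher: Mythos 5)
Your proposal is correct, but it takes a genuinely different route from the paper, whose entire proof is two lines: (i) by shear invariance of the white noise, the $L^2(\Omega)$ norm of the bridge partition function from $\eps^{-1}x$ to $\eps^{-1}y$ equals that of the bridge from $0$ to $0$, and (ii) the second moment of the latter is quoted from \cite[(2.5)]{nakajima2023fluctuations}. Your opening reduction is the same fact obtained differently --- after Wick integration, the difference $B^1-B^2$ of two independent bridges with common endpoints is $\sqrt{2}$ times a $0\to 0$ bridge because the affine drifts cancel --- and everything after that replaces the citation by a self-contained argument. I checked the substantive steps and they hold: the two-kernel Gaussian merge produces exactly the conditional variance $\sigma_i=(r_i-r_{i-1})(s-r_i)/(s-r_{i-1})$ and leaves the return kernel $\rho_{s-r_{i-1}}(w_{i-1},0)$, so after all $n$ integrations the leftover $\rho_s(0,0)$ cancels the bridge normalization; the recursion $Q_n(L)=\int_0^1 f\bigl(Lu(1-u)\bigr)\,L\,Q_{n-1}(Lu)\,\dd u$ is in fact an identity (integrate out $r_1=L(1-u)$; the remaining integral is the same problem with horizon $Lu$); the splitting $f\bigl(Lu(1-u)\bigr)\le f(Lu)+f\bigl(L(1-u)\bigr)$ is justified by $\min(a+b,c)\le\min(a,c)+\min(b,c)$; and the induction closes, since under $Q_{n-1}\le nA^{n-1}$ the endpoint piece gives $n\int_0^L A'(\sigma)A(\sigma)^{n-1}\,\dd\sigma=A(L)^n$ and the bubble piece at most $nA(L)^n$, hence $Q_n\le (n+1)A(L)^n$. (Your prose lists the two contributions before multiplying by the factor $n$ from the induction hypothesis, which momentarily reads as totals $A^n$ and $A^n/n$, but the bookkeeping you actually perform is the right one.) What your route buys is a complete proof that makes visible precisely why the bridge constraint costs only the polynomial factor $n+1$ rather than the $2^n$ of the crude bound, so that the $L^2$ threshold remains $\sqrt{2\pi}$ instead of degrading to $\sqrt{\pi}$; what the paper's route buys is brevity at the price of an external reference. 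One caveat you share with the paper's own statement of the lemma: your geometric series needs $\beta_\eps^2A(\eps^{-2}t)<1$, which holds for $\eps$ below some $\eps_0(\beta,t)$ but fails as $\eps\uparrow 1$ (where $\beta_\eps\to\infty$, and even at fixed $\eps$ the bound grows with $t$), so the literal $\sup_{\eps\le 1}$ with $C_\beta$ independent of $t$ is not attainable by any argument; this is a blemish of the statement as written, not a gap in your proof, which establishes the bound in exactly the regime the paper actually uses.
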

\begin{proof}
By the shear invariance of the environment, for any $\eps >0$ and $x, y \in \R^2$, \[\left\|\EE_{0, {\eps}^{-1}x}^{s, {\eps}^{-1}y}\left[\Psi_s^{\beta_{\eps}, 0}\right]\right\|_{L^2(\Omega)} = \left\| \EE_{0, 0}^{s, 0}\left[\Psi_s^{\beta_{\eps}, 0}\right]\right\|_{L^2(\Omega)}.\] \eqref{eq:2ndmombb} is then derived by applying \cite[(2.5)]{nakajima2023fluctuations}.
\end{proof}

\subsection{Higher moments}
Next we present the boundedness of some higher moments of $\EE_{\eps^{-1}x}\Big[\Psi_{s}^{\beta_{\eps}, 0}\Big]$ and $\EE_{0, {\eps}^{-1}x}^{s, {\eps}^{-1}y}\Big[\Psi_{s}^{\beta_{\eps}, 0}\Big]$. This is proved via hypercontractivity.

\begin{lemma}
Let $0 < \beta < \sqrt{2\pi}$, $t>0$ and $x, y \in \R^2$. There exists some $p_{\beta}>2$ such that $ \forall 2 \leq p < p_{\beta}$, $\exists {C}_{\beta, p} < \infty$ independent of $t, x, y$, such that
\begin{equation}
\sup_{\eps \leq 1}\sup_{s \leq {\eps^{-2}t}}\left\|\EE_{\eps^{-1}x}\left[\Psi_s^{\beta_{\eps}, 0}\right]\right\|_{L^p(\Omega)} \leq {C}_{\beta, p},\label{eq:higherbound1}
\end{equation}
and
\begin{equation}
\sup_{\eps \leq 1}\sup_{s \leq  \eps^{-2}t} \left\|\EE_{0, {\eps}^{-1}x}^{s, {\eps}^{-1}y}\left[\Psi_s^{\beta_{\eps}, 0}\right]\right\|_{L^p(\Omega)} \leq {C}_{\beta, p}.\label{eq:higherbound2}
\end{equation}
\label{le:higher}
\end{lemma}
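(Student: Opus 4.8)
The plan is to bootstrap from the second moment bounds already established to the higher moments via Nelson's hypercontractivity, exploiting the fact that the Wiener chaos expansion of these partition functions is \emph{graded} by the power of $\beta_{\eps}$. I will carry this out for \eqref{eq:higherbound1} in detail; the bridge estimate \eqref{eq:higherbound2} is handled verbatim. Write $U_{\eps} := \EE_{\eps^{-1}x}[\Psi_s^{\beta_{\eps},0}]$ and expand the Wick exponential in \eqref{eq:psi} (with $f\equiv 0$), conditionally on the Brownian motion, into homogeneous chaoses of the white noise $\xi$, so that $U_{\eps} = \sum_{n\geq 0}U_{\eps}^{(n)}$ with $U_{\eps}^{(n)}$ in the $n$-th chaos. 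Since the exponent in \eqref{eq:psi} is linear in $\beta_{\eps}$, the $n$-th component carries exactly a factor $\beta_{\eps}^{n}$, whence
\begin{equation*}
\big\|U_{\eps}^{(n)}\big\|_{L^2(\Omega)}^2 = \beta_{\eps}^{2n}\, b_n(s), \qquad b_n(s)\geq 0 \text{ independent of } \beta_{\eps}.
\end{equation*}
Reading off the $n$-th summand of \eqref{eq:ucov} with $x=y$ (hence $x_0=0$) shows that $b_n(s)$ is precisely the $\beta_{\eps}$-independent simplex integral appearing there, and $\sum_n \beta_{\eps}^{2n}b_n(s) = \E(U_{\eps}^2)$.

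Now apply hypercontractivity in the form $\|X\|_{L^p(\Omega)}^2 \leq \sum_{n\geq 0}(p-1)^n\|X^{(n)}\|_{L^2(\Omega)}^2$ (the dual Nelson bound with Ornstein--Uhlenbeck contraction parameter $\rho = (p-1)^{-1/2}$) to $X=U_{\eps}$:
\begin{equation*}
\|U_{\eps}\|_{L^p(\Omega)}^2 \leq \sum_{n\geq 0}(p-1)^n \beta_{\eps}^{2n}\, b_n(s) = \sum_{n\geq 0}\big(\tilde{\beta}_{\eps}^2\big)^n b_n(s), \qquad \tilde{\beta}_{\eps} := \sqrt{p-1}\,\beta_{\eps}.
\end{equation*}
The crucial point is that the right-hand side is \emph{exactly} the second moment of the same partition function with the tilted coupling $\tilde{\beta} := \sqrt{p-1}\,\beta$ in place of $\beta$. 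Choosing $p_{\beta} := 1 + 2\pi/\beta^2$, which is $>2$ because $\beta^2<2\pi$, every $2\leq p<p_{\beta}$ gives $\tilde{\beta}<\sqrt{2\pi}$. Hence the bound \eqref{eq:general2ndmomentbound}, applied with $\tilde{\beta}$ rather than $\beta$, controls the right-hand side uniformly in $\eps\leq 1$ and $s\leq \eps^{-2}t$ by a constant depending only on $\beta$ and $p$, giving \eqref{eq:higherbound1} with $C_{\beta,p}$ independent of $t,x,y$.

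For \eqref{eq:higherbound2} the chaos grading is the same, so the weighted chaos sum reproduces the second moment of the Brownian-bridge partition function at the tilted coupling $\tilde{\beta}<\sqrt{2\pi}$, which is bounded uniformly by Lemma~\ref{le:2ndmombb} applied with $\tilde{\beta}$. I expect the main technical care to be in the justification of the hypercontractivity step in this infinite-chaos setting: one must check that the chaos expansion of $U_{\eps}$ converges in $L^2(\Omega)$ and that $U_{\eps}$ lies in the domain of the inverse Ornstein--Uhlenbeck operator implicit in the bound. This is guaranteed precisely by the finiteness of the tilted second moment $\sum_n(\tilde{\beta}_{\eps}^2)^n b_n(s)$ for $\tilde{\beta}<\sqrt{2\pi}$, so the range $p<p_{\beta}$ is exactly the range in which the whole argument is licit, and the threshold $p_{\beta}=1+2\pi/\beta^2$ is the natural one. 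The homogeneity identity $\|U_{\eps}^{(n)}\|_{L^2}^2=\beta_{\eps}^{2n}b_n(s)$ is the only genuinely model-specific input and follows directly from the explicit $n$-th term of \eqref{eq:ucov}.
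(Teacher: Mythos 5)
Your proof is correct and is essentially the paper's own argument: the graded (``dual Nelson'') hypercontractive bound you invoke is exactly the Mehler-formula identity $T_v \EE_{0,\eps^{-1}x}^{s,\eps^{-1}y}\big[\Psi_s^{\beta_\eps,0}\big] = \EE_{0,\eps^{-1}x}^{s,\eps^{-1}y}\big[\Psi_s^{(e^{-v}\beta_\eps),0}\big]$ that the paper uses, with the same tilting $\beta \mapsto \sqrt{p-1}\,\beta$ (i.e. $e^{v}\beta$ with $p=e^{2v}+1$), the same threshold $p_\beta = 1+2\pi/\beta^2$, and the same reduction to the second-moment bounds \eqref{eq:general2ndmomentbound} and Lemma~\ref{le:2ndmombb} at the tilted coupling. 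The only cosmetic difference is that the paper imports \eqref{eq:higherbound1} directly from \cite{caravenna2020two} and runs the semigroup argument only for the bridge bound \eqref{eq:higherbound2}, whereas you carry out the identical mechanism for both, correctly flagging the one technical point (finiteness of the tilted second moment justifies writing $U_\eps$ in the image of $T_v$ applied to an $L^2$ element).
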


\begin{proof}
\eqref{eq:higherbound1} is exactly \cite[(5.11)]{caravenna2020two} with $p_{\beta} = 1+2\pi/\beta^2 $.

\eqref{eq:higherbound2} can be proved in a similar way, as $\EE_{0, {\eps}^{-1}x}^{s, {\eps}^{-1}y}\Big[\Psi_s^{\beta_{\eps}, 0}\Big]$ admits a similar Wiener chaos expansion and is bounded in $L^2$. A discrete version of \eqref{eq:higherbound2} for the partition function of point-to-point polymer was proved in \cite[Corollary 2.8 (i)]{gabriel2021central}. 
Here we present a proof in the \replaced{continuum }{continuous }setting.

Let $(\Omega, \mathscr{F}, \Prob)$ be the probability space on which the space-time white noise $\xi$ is built. Let \replaced{$\{T_v, v \geq 0\}$}{$\{T_r, r \geq 0\}$} be the Ornstein-Uhlenbeck semigroup on $L^2(\Omega)$ (see e.g. \cite{nualart2006malliavin} for a reference). Let $\tilde{\xi}$ be an independent copy of $\xi$ built on the probability space $(\tilde{\Omega}, \tilde{\mathscr{F}}, \tilde{\Prob})$. By Mehler’s formula,
\begin{equation}
\begin{aligned}
&T_v \left(\EE_{0, {\eps}^{-1}x}^{s, {\eps}^{-1}y}\left[\Psi_s^{\beta_{\eps}, 0}\right]\right) = T_v \left(\EE_{0, {\eps}^{-1}x}^{s, {\eps}^{-1}y}\left[ \exp \left[\beta_{\eps}\int_0^s \int_{\R^2} \phi(y-B_r)\xi(r, y) \dd y \dd r-\frac{1}{2}\beta_{\eps}^2 \|\phi\|_{L^2}^2s\right]\right]\right) \\
&= \tilde{\E}\EE_{0, {\eps}^{-1}x}^{s, {\eps}^{-1}y}\left[ \exp \left[\beta_{\eps}\int_0^s \int_{\R^2} \phi(y-B_r)\left(e^{-v}\xi(r, y)+\sqrt{1-e^{-2v}}\tilde{\xi}(r,y)\right) \dd y \dd r-\frac{1}{2}\beta_{\eps}^2 \|\phi\|_{L^2}^2s\right]\right],
\end{aligned}\nonumber
\end{equation}
where $\tilde{\E}$ is the expectation w.r.t. $\tilde{\Prob}$. Since \begin{align*}
&\tilde{\E} \left[ \exp \left[\beta_{\eps}\int_0^s \int_{\R^2} \phi(y-B_r)\left(e^{-v}\xi(r, y)+\sqrt{1-e^{-2v}}\tilde{\xi}(r,y)\right) \dd y \dd r-\frac{1}{2}\beta_{\eps}^2 \|\phi\|_{L^2}^2s\right]\right]\\
&= \exp \left[\beta_{\eps}\int_0^s \int_{\R^2} \phi(y-B_r)e^{-v}\xi(r, y)\dd y \dd r-\frac{1}{2}e^{-2v}\beta_{\eps}^2\|\phi\|_{L^2}^2s\right],
\end{align*}
we have 
\begin{equation}
\begin{aligned}
&T_v \left(\EE_{0, {\eps}^{-1}x}^{s, {\eps}^{-1}y}\left[\Psi_s^{\beta_{\eps}, 0}\right]\right) \\
&= \EE_{0, {\eps}^{-1}x}^{s, {\eps}^{-1}y} \left[  \exp \left[\beta_{\eps}\int_0^s \int_{\R^2} \phi(y-B_r)e^{-v}\xi(r, y)\dd y \dd r-\frac{1}{2}e^{-2v}\beta_{\eps}^2\|\phi\|_{L^2}^2s\right] \right]\\
&= \EE_{0, {\eps}^{-1}x}^{s, {\eps}^{-1}y}\left[\Psi_s^{(e^{-v}\beta_{\eps}), 0}\right].
\end{aligned}\label{eq:oubd}
\end{equation}
By the hypercontractivity property of $\{T_v, v \geq 0\}$, for any $v>0$, if $p(v) = e^{2v}+1$, we have
\[
\left\|T_v \left(\EE_{0, {\eps}^{-1}x}^{s, {\eps}^{-1}y}\left[\Psi_s^{\beta_{\eps}, 0}\right]\right)\right\|_{L^{p(v)}(\Omega)} \leq \left\| \EE_{0, {\eps}^{-1}x}^{s, {\eps}^{-1}y}\left[\Psi_s^{\beta_{\eps}, 0}\right] \right\|_{L^2(\Omega)}.
\]
With $0< \beta < \sqrt{2\pi}$, we have $e^{v}\beta < \sqrt{2\pi}$ when $v < \log \frac{\sqrt{2\pi}}{\beta}$. Thus by \eqref{eq:oubd} and Lemma~\ref{le:2ndmombb}, if we restrict $p <  p_\beta = 1+{2\pi}/{\beta^2}$ and let $v=\frac{1}{2}\log(p-1)$ accordingly, then
\[
\sup_{\eps \leq 1}\sup_{s \leq  \eps^{-2}t} \left\| \EE_{0, {\eps}^{-1}x}^{s, {\eps}^{-1}y}\left[\Psi_s^{\beta_{\eps}, 0}\right] \right\|_{L^p(\Omega)} \leq \sup_{\eps \leq 1}\sup_{s \leq  \eps^{-2}t} \left\| \EE_{0, {\eps}^{-1}x}^{s, {\eps}^{-1}y}\left[\Psi_s^{(e^v\beta_{\eps}), 0}\right] \right\|_{L^2(\Omega)} < C_{\beta, v},
\]
for some constant $C_{\beta, v}$.
\end{proof}

\subsection{Negative moments}
We next bound the negative moments of $\EE_{\eps^{-1}x}\Big[\Psi_{s}^{\beta_{\eps}, 0}\Big]$. The following lemma is proved in  \cite{caravenna2020two} via concentration inequality. 

\begin{lemma}[\cite{caravenna2020two}, (5.13)]\label{le:negmom}
Let $0 < \beta < \sqrt{2\pi}$, $t>0$ and $x \in \R^2$. For any $p >0$, there exists ${C}_{p, \beta} < \infty$ independent of $t$ and $x$, such that
\[
\sup_{\eps \leq 1}\sup_{s \leq  \eps^{-2}t} \E\left[\Big(\EE_{\eps^{-1}x}\Big[\Psi_{s}^{\beta_{\eps}, 0}\Big]\Big)^{-p}\right] \leq {C}_{p, \beta}.
\]
\end{lemma}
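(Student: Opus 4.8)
The plan is to reduce the bound on $\E[U^{-p}]$, where $U := \EE_{\eps^{-1}x}[\Psi_s^{\beta_\eps,0}]>0$, to a concentration statement for $F := \log U$. Two elementary facts frame the problem. First, integrating out the white noise in \eqref{eq:psi} (with $f=0$) exactly cancels the Itô correction $\tfrac12\beta_\eps^2\|\phi\|_{L^2}^2 s$, so $\E\Psi_s^{\beta_\eps,0}=1$ and hence $\E U = 1$; in particular $\E F\le \log\E U=0$ by Jensen, and the only danger is that $U$ is anomalously small, i.e.\ that $F$ has a heavy lower tail. Second, by Paley--Zygmund together with the uniform second-moment bound of Lemma~\ref{le:higher} (take $p=2$ in \eqref{eq:higherbound1}), one has $\Prob(U\ge \tfrac12)\ge \frac{1}{4\,\E U^2}\ge \delta>0$ uniformly in $\eps\le1$ and $s\le\eps^{-2}t$. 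Consequently, if I can show that $F$ obeys a sub-Gaussian concentration $\Prob(|F-m|\ge \lambda)\le 2e^{-c\lambda^2}$ about its median $m$ with $c$ bounded below uniformly, then the event $\{F\ge -\log 2\}$ of probability at least $\delta$ forces $m\ge -C$, and the two estimates combine to give $\E[U^{-p}]=e^{-pm}\,\E[e^{-p(F-m)}]\le C_{p,\beta}$ uniformly. So everything reduces to a uniform sub-Gaussian concentration for $F$.

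The structural fact that makes such an estimate plausible is that $F$ is a \emph{convex} functional of the noise $\xi$. Indeed, for each Brownian path the exponent in $\Psi_s^{\beta_\eps,0}$ is affine in $\xi$, so $U(\xi)=\EE_{\eps^{-1}x}[e^{L(\xi)}]$ with $L$ affine, and Hölder's inequality gives $U(\theta\xi_1+(1-\theta)\xi_2)\le U(\xi_1)^{\theta}U(\xi_2)^{1-\theta}$, i.e.\ $F=\log U$ is convex. Computing the Malliavin derivative yields $D_{r,y}F = \beta_\eps\,\langle \phi(y-B_r)\rangle$, where $\langle\,\cdot\,\rangle$ denotes the random polymer average with weight $\Psi_s^{\beta_\eps,0}$, so that the Cameron--Martin norm of the gradient is
\[
\|DF\|^2 = \beta_\eps^2\int_0^s\!\int_{\R^2}\langle\phi(y-B_r)\rangle^2\,\dd y\,\dd r .
\]
For a convex functional of a Gaussian field, a one-sided concentration inequality controls $\Prob(F\le m - \lambda)$ in terms of a suitable norm of $\|DF\|$, and this is the route I would follow.

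The main obstacle is that the crude, worst-case control of the gradient is hopelessly large. Using $\phi\ge0$ and $\int\phi=1$ one gets $\int_{\R^2}\langle\phi(y-B_r)\rangle^2\dd y\le \|\phi\|_\infty$, hence $\|DF\|^2\le \beta_\eps^2\|\phi\|_\infty s$; with $s$ as large as $\eps^{-2}t$ this is of order $\eps^{-2}/\log\eps^{-1}$, and feeding it into Gaussian concentration produces a bound that explodes. The resolution, which is the heart of the matter, is that this estimate is wildly non-sharp: because the polymer endpoint spreads diffusively, the time-$r$ marginal has density of order $(4\pi r)^{-1}$, so $\int_{\R^2}\langle\phi(y-B_r)\rangle^2\dd y$ typically decays like $(4\pi r)^{-1}$ and the time integral is only $\|DF\|^2\approx \tfrac{\beta_\eps^2}{4\pi}\log s=O(1)$ --- consistent with the fact (cf.\ Theorem~\ref{thm:kpzpw}) that $\Var F$ remains bounded. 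Making this heuristic rigorous is delicate precisely because $\langle\,\cdot\,\rangle$ carries a factor $U^{-1}$, so $L^q$ control of $\|DF\|^2$ is entangled with the very negative moments one is trying to bound. This is exactly the technical content of the concentration inequality of Caravenna--Sun--Zygouras. After the diffusive rescaling recorded in \eqref{eq:kpzfk}, our $U=\EE_{\eps^{-1}x}[\Psi_s^{\beta_\eps,0}]$ coincides with the partition function whose negative moments are bounded there, so I would conclude by invoking their estimate \cite[(5.13)]{caravenna2020two} --- uniform in $s\le\eps^{-2}t$ and, by translation invariance of the environment, in $x$ --- which supplies the required sub-Gaussian lower tail and hence all negative moments.
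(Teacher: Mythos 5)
Your proposal is correct and ultimately coincides with the paper's treatment: the paper does not prove Lemma~\ref{le:negmom} at all but simply cites the concentration-inequality-based negative moment bound \cite[(5.13)]{caravenna2020two}, which is exactly the estimate you invoke in your final step. Your preliminary scaffolding (Paley--Zygmund to locate the median, convexity of $\log U$ in the noise, the Malliavin gradient computation) is a faithful sketch of the ideas behind that cited result, but it plays no independent role in the argument since the citation already delivers the full conclusion.
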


A direct corollary is the following moment bounds of the logarithm.
\begin{corollary}\label{co:logmom}
Let $0 < \beta < \sqrt{2\pi}$, $t>0$ and $x \in \R^2$. For any $p >0$, there exists ${C}_{p, \beta} < \infty$ independent of $t$ and $x$, such that 
\[
\sup_{\eps \leq 1} \sup_{s \leq \eps^{-2}t}\E\Big(\left|\log \EE_{\eps^{-1}x}\Big[\Psi_{s}^{\beta_\eps, 0}\Big]\right|^{p}\Big) \leq  {C}_{p, \beta}.
\]
\end{corollary}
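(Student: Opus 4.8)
The plan is to dominate $|\log X|^p$ by a sum of a positive and a negative power of $X$, where $X := \EE_{\eps^{-1}x}\big[\Psi_{s}^{\beta_\eps, 0}\big]$, and then control each piece by the moment bounds already established. The elementary input is that $\log$ grows and decays more slowly than any power: for every $p>0$ and every $q>0$ there is a finite constant $C_{p,q}$, depending only on $p$ and $q$, with
\[
|\log z|^p \leq C_{p,q}\big(z^q + z^{-q}\big) \qquad \text{for all } z > 0.
\]
I would verify this by treating $z \geq 1$ and $0 < z < 1$ separately; on each region the ratio $(\log z)^p / z^{\pm q}$ is continuous and tends to $0$ at the relevant endpoint, hence is bounded.

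Next I would apply this pointwise in $\omega$ to the strictly positive random variable $X$ (positive because $\Psi_{s}^{\beta_\eps, 0}$ is an exponential and $\EE_{\eps^{-1}x}$ is an expectation), and take the expectation over the noise to obtain
\[
\E\big(|\log X|^p\big) \leq C_{p,q}\big(\E[X^q] + \E[X^{-q}]\big).
\]
It then remains to bound the two moments on the right uniformly in $\eps \leq 1$ and $s \leq \eps^{-2}t$, for one convenient choice of $q$.

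I would choose $q \in (0,2]$, for instance $q=1$. The negative moment $\E[X^{-q}]$ is controlled directly by Lemma~\ref{le:negmom} (applied with exponent $q$), which is valid for every $q>0$ and uniform in $\eps \leq 1$, $s \leq \eps^{-2}t$, with constant independent of $t$ and $x$. For the positive moment, monotonicity of $L^p$ norms on the probability space (or Jensen's inequality) gives $\E[X^q] = \|X\|_{L^q(\Omega)}^q \leq \|X\|_{L^2(\Omega)}^q$ since $q \leq 2$ and $X \geq 0$, and $\|X\|_{L^2(\Omega)}$ is bounded uniformly by \eqref{eq:higherbound1} with $p=2$ (equivalently by the second-moment estimate \eqref{eq:general2ndmomentbound}), again uniformly in $\eps, s$ and independent of $x$. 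Combining the two bounds and taking the supremum over $\eps \leq 1$ and $s \leq \eps^{-2}t$ yields the assertion with a constant depending only on $p$ and $\beta$.

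The argument is essentially routine, so there is no genuine obstacle, only the bookkeeping of keeping the constants uniform in $\eps, s$ and free of $t, x$; this is automatic since $C_{p,q}$ depends only on $p,q$ while both Lemma~\ref{le:negmom} and Lemma~\ref{le:higher} provide bounds uniform in $\eps \leq 1$, $s \leq \eps^{-2}t$ and independent of $x$. The only mild point to watch is that $q$ must be chosen small enough that $\E[X^q]$ is finite; taking $q \leq 2$ sidesteps any need for the full hypercontractive range $q < p_\beta$, relying only on the second moment.
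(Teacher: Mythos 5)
Your proposal is correct and is essentially the paper's own proof: the paper uses $|x|^p \leq C_p(e^x + e^{-x})$ with $x = \log \EE_{\eps^{-1}x}\big[\Psi_s^{\beta_\eps,0}\big]$, which is exactly your inequality $|\log z|^p \leq C_{p,q}(z^q + z^{-q})$ specialized to $q=1$, and then invokes the same two inputs, namely the second moment bound \eqref{eq:general2ndmomentbound} and the negative moment bound of Lemma~\ref{le:negmom}. Your appeal to \eqref{eq:higherbound1} with exponent $2$ for the positive moment is an equivalent (and, for the uniformity in $\eps \leq 1$, slightly more explicitly stated) substitute for \eqref{eq:general2ndmomentbound}, so there is no substantive difference.
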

\begin{proof}
For any $p >0$, there exists $C_p>0$ such that $|x|^p \leq C_p (e^x + e^{-x})$ for any $x\in \R$. Apply \eqref{eq:general2ndmomentbound} and Lemma~\ref{le:negmom}.
\end{proof}

If  $h_0:\R^2 \to \R$ is bounded, \eqref{eq:general2ndmomentbound} and Lemma~\ref{le:negmom} remain valid for $\Psi_s^{\beta_\eps, h_0}$ after we multiply some constants to the bounds.  As a result, Corollary~\ref{co:logmom} also holds for $\Psi_s^{\beta_\eps, h_0}$.

\section{Proof of Theorem~\ref{thm:main}}
The proof here is inspired by \cite{chatterjee2023weak}. 

We first state the following propositions that we will prove later.

\begin{proposition}\label{pr:SHEdetermin}
Let $0<\beta<\sqrt{2\pi}$ and $h_0 \in C(\R^2)$ be bounded and Lipschitz continuous. Let $a_{\eps} = o(1)$. There exist $C_{\beta}>0$ and $\eps_0>0$, such that when $0 < \eps < \eps_0$, for any $t>0$, $x \in \R^2$, we have 
\begin{equation}
\E\left[\left|\frac{ \EE_{\eps^{-1}x}\big[\Psi_{\eps^{-2}t}^{\beta_{\eps}, h_0}\big]}{\EE_{\eps^{-1}x}\big[\Psi_{\eps^{-2(1-a_\eps)}t}^{\beta_{\eps}, 0}\big]}  - \exp[\bar{h}(t - \eps^{2a_{\eps}}t,x)]\right|^2\right] \leq C_{\beta}(\eps^{2a_{\eps}}t+a_{\eps}^{1/3}),
\label{eq:conv2SHE}
\end{equation}
where $\bar{h}(t,x)$ is defined as in \eqref{eq:deterkpz}.
\end{proposition}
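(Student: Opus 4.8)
The plan is to split the Feynman--Kac path at the intermediate time $S := \eps^{-2(1-a_\eps)}t$ and to exploit the independence of the white noise on the disjoint windows $[0,S]$ and $[S,T]$, where $T := \eps^{-2}t$. Writing out $\EE_{\eps^{-1}x}[\Psi_T^{\beta_\eps,h_0}]$, decomposing $\int_0^T = \int_0^S + \int_S^T$, and applying the Markov property of Brownian motion at time $S$ gives
\[
\EE_{\eps^{-1}x}\big[\Psi_T^{\beta_\eps,h_0}\big] = \EE_{\eps^{-1}x}\big[\Psi_S^{\beta_\eps,0}\,G(B_S)\big],
\]
where $G(w):=\EE_w\big[\exp(h_0(\eps B_{T-S})+\beta_\eps\int_0^{T-S}\int_{\R^2}\phi(y-B_r)\xi(S+r,y)\dd y\dd r-\tfrac12\beta_\eps^2\|\phi\|_{L^2}^2(T-S))\big]$ is built from the noise on $[S,T]$ only, hence is independent of $\Psi_S^{\beta_\eps,0}$ and of the denominator $D:=\EE_{\eps^{-1}x}[\Psi_S^{\beta_\eps,0}]$. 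Introducing the random polymer average $\langle F\rangle_S := \EE_{\eps^{-1}x}[\Psi_S^{\beta_\eps,0}F(B_S)]/D$, the ratio in \eqref{eq:conv2SHE} is exactly $\langle G\rangle_S$, and since $\langle 1\rangle_S=1$ I may write $\langle G\rangle_S-\Theta=\langle G-\Theta\rangle_S$ for any target $\Theta$.

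Because the normalization in $\Psi$ makes the noise exponential a mean-one martingale, $\E[G(w)]=g(w):=\EE_w[e^{h_0(\eps B_{T-S})}]$ is \emph{deterministic}, and Brownian scaling together with $T-S=\eps^{-2}(t-\eps^{2a_\eps}t)$ yields $g(\eps^{-1}x)=\EE_x[e^{h_0(B_{t-\eps^{2a_\eps}t})}]=\exp[\bar h(t-\eps^{2a_\eps}t,x)]=:\Theta$, the last equality being the Cole--Hopf representation of the deterministic KPZ equation \eqref{eq:deterkpz}. I then split $\langle G-\Theta\rangle_S=\langle G-g\rangle_S+\langle g-g(\eps^{-1}x)\rangle_S$ and estimate the two pieces in $L^2(\Omega)$ separately. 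For the deterministic piece, $g$ inherits the Lipschitz regularity of $e^{h_0}$ on the macroscopic scale, so $|g(w)-g(\eps^{-1}x)|\lesssim\eps|w-\eps^{-1}x|$, whence $\E|\langle g-g(\eps^{-1}x)\rangle_S|^2\lesssim\eps^2\,\E\langle|B_S-\eps^{-1}x|^2\rangle_S$. The polymer displacement is bounded by writing $\E\langle|B_S-\eps^{-1}x|^2\rangle_S=\E[D^{-1}\EE_{\eps^{-1}x}[\Psi_S^{\beta_\eps,0}|B_S-\eps^{-1}x|^2]]$ and applying H\"older together with the negative-moment bound of Lemma~\ref{le:negmom} and the second-moment formula \eqref{eq:ucov}; this produces a diffusive bound of order $S$, so the contribution is of order $\eps^2 S=\eps^{2a_\eps}t$, matching the first error term.

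The genuinely random piece $\langle G-g\rangle_S$ is the heart of the matter. Squaring it and using two independent Brownian replicas $B,\tilde B$ under the common environment on $[0,S]$, the pathwise identity $\EE_{\eps^{-1}x}^{\otimes 2}[\Psi_S^{\beta_\eps,0}(B)\Psi_S^{\beta_\eps,0}(\tilde B)]=D^2$ turns $\langle\cdot\rangle_S^{\otimes 2}$ into a (random) probability average, and the independence of $G$ from the $[0,S]$-noise collapses the cross expectation to the covariance of $G$ at two starting points:
\[
\E\big|\langle G-g\rangle_S\big|^2 = \E\big[\langle \Cov(G(B_S),G(\tilde B_S))\rangle_S^{\otimes 2}\big].
\]
Using that $h_0$ is bounded and the second-moment computation behind \eqref{eq:ucov}, one gets $0\le \Cov(G(w),G(w'))\lesssim \EE_{(w-w')/\sqrt 2}[\exp(\beta_\eps^2\int_0^{T-S}V(\sqrt 2 B_r)\dd r)-1]$, a collision functional that is $O(1)$ when $w,w'$ are microscopically close but becomes $O(1-\zeta_*)$ once $|w-w'|$ reaches the mesoscopic scale $\eps^{-\zeta_*}$ (this is the quantitative form of the multi-scale decorrelation in Theorem~\ref{thm:kpzpw}, read off from the bound $\int V(\sqrt2 y)\rho_r(z,y)\dd y\le \tfrac1{4\pi r}\wedge\|V\|_\infty$ together with $\beta_\eps^2=\beta^2/\log\eps^{-1}$). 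I therefore split the replica average at a separation threshold $\eps^{-\zeta_*}$: on the far event the covariance is bounded by the small constant $\sigma^2_{\zeta_*}=O(1-\zeta_*)$ and, since $\langle 1\rangle_S^{\otimes2}=1$, this part contributes at most $\sigma^2_{\zeta_*}$; on the complementary collision event the covariance is bounded by $\sigma_0^2$ and what remains is the two-replica collision probability $\E\langle \1_{|B_S-\tilde B_S|\le \eps^{-\zeta_*}}\rangle_S^{\otimes2}$.

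The main obstacle is precisely this collision estimate, because the average carries the random weight $D^{-2}$ and cannot be reduced to a clean Brownian computation. I would control it by H\"older's inequality in $\Omega$, decoupling $\|D^{-2}\|_{L^q}$ (finite for all $q$ by Lemma~\ref{le:negmom}) from $\|\EE^{\otimes2}[\Psi_S^{\beta_\eps,0}(B)\Psi_S^{\beta_\eps,0}(\tilde B)\1_{\mathrm{close}}]\|_{L^{q'}}$, the latter being estimated via Minkowski and Cauchy--Schwarz in $\Omega$ using the higher-moment bound \eqref{eq:higherbound1} (valid as long as the relevant exponent stays below $p_\beta=1+2\pi/\beta^2$), which leaves the pure Brownian collision probability $\sim \eps^{-2\zeta_*}/S$ to be bounded by the Gaussian density of $B_S-\tilde B_S$. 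Balancing the far-decorrelation term $O(1-\zeta_*)$ against the collision term, where the constraint $\eps^{2a_\eps}=o(1)$ guarantees the collision term is negligible once $\zeta_*$ is chosen slightly below $1-a_\eps$, and absorbing the losses incurred by the H\"older decoupling with the available moment bounds, yields an error of the stated order $a_\eps^{1/3}$. Combining this with the $O(\eps^{2a_\eps}t)$ bias from the second paragraph gives \eqref{eq:conv2SHE}.
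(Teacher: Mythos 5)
Your proposal is correct in its architecture and that architecture coincides with the paper's: the same splitting time $S=\eps^{-2(1-a_\eps)}t$, the same Markov-property representation of the ratio as a randomly weighted polymer average, and the same bias/fluctuation decomposition --- your split $\langle G-\Theta\rangle_S=\langle G-g\rangle_S+\langle g-g(\eps^{-1}x)\rangle_S$ is exactly the paper's law-of-total-variance decomposition conditional on $\mathscr{F}_{s}$ (your identity $\E|\langle G-g\rangle_S|^2=\E[\langle \Cov(G(B_S),G(\tilde B_S))\rangle_S^{\otimes 2}]$ is precisely \eqref{eq:averagedvar}, and your bias estimate via the Lipschitz property of $g$ and a diffusive displacement bound reproduces Lemma~\ref{le:condmean} with the same order $\eps^{2a_\eps}t$). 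Where you genuinely diverge is the fluctuation term. The paper never splits on replica separation: it truncates on the event $\{\EE_{\eps^{-1}x}[\Psi_s^{\beta_\eps,0}]\le b_\eps\}$ (probability $\lesssim b_\eps$ by Lemma~\ref{le:negmom}, conditional variance uniformly bounded there), and on the complement replaces the random normalization by $b_\eps^{-2}$ and computes the \emph{expectation} of the unnormalized replica integral by a chaos expansion; the crucial point is that averaging the covariance kernel against the heat kernels $\rho_{\eps^2 s}$ --- i.e., starting the replicas at their typical separation $\eps^{-(1-a_\eps)}\sqrt{t}$ --- makes each chaos order carry a factor $\tfrac{\beta^2}{2\pi}a_\eps$ up to lower-order corrections, yielding $O(a_\eps)$, and the balance $b_\eps+a_\eps b_\eps^{-2}$ with $b_\eps=a_\eps^{1/3}$ produces the exponent $1/3$. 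Your alternative --- a far/close dichotomy at separation $\eps^{-\zeta_*}$, with the far covariance $O(1-\zeta_*)$ absorbed by $\langle 1\rangle_S^{\otimes 2}=1$ (so no denominator issue there), and the close event handled by H\"older decoupling of $\|D^{-2}\|_{L^q}$ from the bridge moments of \eqref{eq:higherbound2} --- is sound: the exponent constraint $2q'<p_\beta$ is satisfiable since $p_\beta>2$, and Minkowski plus Cauchy--Schwarz leave the free two-replica collision probability, with no power loss in that probability. In the main regime $a_\eps\log\eps^{-1}\to\infty$ your route in fact gives the stronger bound $O(a_\eps+(\log\eps^{-1})^{-1})$ for the fluctuation term, of which $a_\eps^{1/3}$ is a weakening; you do not need (and your balancing does not naturally produce) the exponent $1/3$, which in the paper is an artifact of the $b_\eps$ trade-off.

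Two caveats you should repair in a full write-up. First, your collision probability $\eps^{-2\zeta_*}/S$ carries a factor $1/t$, so with a threshold fixed at $\eps^{-\zeta_*}$ your bound degrades as $t\to 0$, whereas the paper's computation is $t$-robust (the $t$'s cancel in $\log(t/\eps^2 s)=\log\eps^{-2a_\eps}$); the fix is to take the threshold proportional to $\sqrt{S}$, say $\sqrt{S}\,\eps^{\delta_\eps}$, after which both the far covariance $\tfrac{\beta^2}{2\pi}(a_\eps+\delta_\eps)+O((\log\eps^{-1})^{-1})$ and the collision probability $\eps^{2\delta_\eps}$ become uniform in $t$, exactly mirroring the cancellation in the paper. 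Second, your far-covariance estimate needs more than the stated bound $\int V(\sqrt 2y)\rho_r(z,y)\,\dd y\le \tfrac{1}{4\pi r}\wedge\|V\|_\infty$, which is a supremum over starting points and alone yields only the full constant $\tfrac{\beta^2}{2\pi}$; to get the factor $(1-\zeta_*)$ in the \emph{first} chaos order you must use the off-diagonal Gaussian decay of $\rho_r$ at distance $\eps^{-\zeta_*}$ for $r\ll\eps^{-2\zeta_*}$ (the contribution of short times is then $O(\beta_\eps^2)=O((\log\eps^{-1})^{-1})$), with the subsequent orders bounded by the supremum estimate as usual --- this is the standard computation implicit in Theorem~\ref{thm:kpzpw} and parallel to the paper's appendix, but it is an ingredient you should make explicit rather than cite the $\tfrac{1}{4\pi r}$ bound for. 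Neither caveat affects the viability of the approach.
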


\begin{proposition}\label{pr:KPZdetermin}
With the same assumptions as in Proposition~\ref{pr:SHEdetermin}, there exist $C_{\beta}>0$ and $\eps_0>0$, such that when $0 < \eps < \eps_0$, for any $t>0$, $x \in \R^2$, we have 
\begin{equation}
\E\left[\left| \log \frac{ \EE_{\eps^{-1}x}\big[\Psi_{\eps^{-2}t}^{\beta_{\eps}, h_0}\big]}{\EE_{\eps^{-1}x}\big[\Psi_{\eps^{-2(1-a_\eps)}t}^{\beta_{\eps}, 0}\big]}  - \bar{h}(t - \eps^{2a_{\eps}}t,x)\right|^2\right] \leq C_{\beta} (\eps^{2a_{\eps}}t+a_{\eps}^{1/3})^{1/8}.
\label{eq:conv2KPZ}
\end{equation}
\end{proposition}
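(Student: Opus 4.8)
The plan is to upgrade the $L^2$ convergence of the ratio in Proposition~\ref{pr:SHEdetermin} to $L^2$ convergence of its logarithm. Write $\delta := \eps^{2a_{\eps}}t + a_{\eps}^{1/3}$ and abbreviate
\[
R_{\eps} := \frac{ \EE_{\eps^{-1}x}\big[\Psi_{\eps^{-2}t}^{\beta_{\eps}, h_0}\big]}{\EE_{\eps^{-1}x}\big[\Psi_{\eps^{-2(1-a_\eps)}t}^{\beta_{\eps}, 0}\big]}, \qquad Y_{\eps} := \bar{h}(t - \eps^{2a_{\eps}}t, x),
\]
so Proposition~\ref{pr:SHEdetermin} reads $\E[|R_{\eps} - e^{Y_{\eps}}|^2] \leq C_{\beta}\delta$ and the goal is to bound $\E[|\log R_{\eps} - Y_{\eps}|^2]$. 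Two uniform inputs make the passage to the logarithm possible. First, $Y_{\eps}$ is bounded: by Hopf--Cole $e^{\bar{h}}$ is the heat evolution of $e^{h_0}$, so the maximum principle gives $\inf h_0 \leq \bar{h} \leq \sup h_0$, whence $|Y_{\eps}| \leq \|h_0\|_{\infty}$ and $e^{Y_{\eps}} \geq a := e^{-\|h_0\|_{\infty}}$ (note $t - \eps^{2a_{\eps}}t \in (0,t)$ for $\eps$ small). Second, since $\eps^{-2(1-a_{\eps})}t \leq \eps^{-2}t$, splitting $\log R_{\eps}$ into its two logarithms and applying Corollary~\ref{co:logmom} (together with its stated analogue for $\Psi^{\beta_{\eps}, h_0}$) yields $\E[|\log R_{\eps}|^{p}] \leq C_{p,\beta}$ for every $p$.

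The core estimate I would carry out is a split according to whether $R_{\eps}$ is bounded below by $a/2$. On the good event $\{R_{\eps} \geq a/2\}$ I would use the elementary inequality $|\log u - \log v| \leq |u-v|/\min(u,v)$ for $u,v>0$; there $\min(R_{\eps}, e^{Y_{\eps}}) \geq a/2$, so $|\log R_{\eps} - Y_{\eps}| \leq (2/a)|R_{\eps} - e^{Y_{\eps}}|$ and Proposition~\ref{pr:SHEdetermin} gives $\E[|\log R_{\eps} - Y_{\eps}|^2\1_{\{R_{\eps} \geq a/2\}}] \leq (4/a^2)C_{\beta}\delta$. On the rare event $\{R_{\eps} < a/2\}$, the bound $e^{Y_{\eps}} \geq a$ forces $|R_{\eps} - e^{Y_{\eps}}| \geq a/2$, so Chebyshev and Proposition~\ref{pr:SHEdetermin} give $\Prob(R_{\eps} < a/2) \leq (4/a^2)C_{\beta}\delta$; then from $|\log R_{\eps} - Y_{\eps}|^2 \leq 2(\log R_{\eps})^2 + 2\|h_0\|_{\infty}^2$, Cauchy--Schwarz and the uniform fourth-moment bound on $\log R_{\eps}$ yield
\[
\E\big[|\log R_{\eps} - Y_{\eps}|^2\1_{\{R_{\eps} < a/2\}}\big] \leq 2\,\E[(\log R_{\eps})^{4}]^{1/2}\,\Prob(R_{\eps} < a/2)^{1/2} + 2\|h_0\|_{\infty}^2\,\Prob(R_{\eps} < a/2) \leq C_{\beta}\delta^{1/2}.
\]
Adding the two pieces and using $\delta \leq 1$ for $\eps$ small gives a bound of order $\delta^{1/2}$, which since $\delta \leq 1$ is itself $\leq \delta^{1/8}$; the exponent $1/8$ in the statement is thus not optimized, and any fixed positive power of $\delta$ would do.

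The only genuine obstacle is the singularity of the logarithm at the origin: when $R_{\eps}$ is atypically small, $\log R_{\eps}$ is large and negative and cannot be compared to $|R_{\eps} - e^{Y_{\eps}}|$ directly. This is precisely where the negative-moment control of Lemma~\ref{le:negmom}, repackaged as the uniform moment bound on $\log R_{\eps}$ in Corollary~\ref{co:logmom}, enters: it lets the rare event $\{R_{\eps} < a/2\}$ contribute only a higher power of $\delta$ after Cauchy--Schwarz, closing the estimate. All constants remain independent of $t$ and $x$ because both Proposition~\ref{pr:SHEdetermin} and Corollary~\ref{co:logmom} are uniform in those variables.
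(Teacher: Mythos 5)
Your proof is correct, but it takes a genuinely different route from the paper's. The paper proceeds in two stages: it first establishes an $L^{1/2}(\Omega)$ estimate (Lemma~\ref{le:cov2kpzhalf}) via the elementary inequality $|\log a - \log b|^{1/2} \leq \sqrt{|a-b|}\,(a^{-1/2}+b^{-1/2})$, Cauchy--Schwarz, Proposition~\ref{pr:SHEdetermin}, the second-moment bound \eqref{eq:general2ndmomentbound} and the negative-moment bound of Lemma~\ref{le:negmom} applied to the numerator, yielding a bound of order $\delta^{1/4}$ with $\delta := \eps^{2a_\eps}t + a_\eps^{1/3}$; it then upgrades to $L^2$ by writing $|X|^2 = |X|^{1/4}|X|^{7/4}$ and applying Cauchy--Schwarz together with the uniform high moments of the logarithm from Corollary~\ref{co:logmom} (and its $h_0$-analogue), which is exactly where the exponent $1/8$ comes from. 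You instead perform a good/bad event decomposition on $\{R_\eps \geq a/2\}$, exploiting the deterministic lower bound $e^{Y_\eps} \geq a = e^{-\|h_0\|_\infty}$ from the maximum principle, the mean-value inequality $|\log u - \log v| \leq |u-v|/\min(u,v)$ on the good event, and Chebyshev plus the uniform fourth moment of $\log R_\eps$ on the bad event. The ingredients are identical (Proposition~\ref{pr:SHEdetermin}, Lemma~\ref{le:negmom} repackaged through Corollary~\ref{co:logmom}, boundedness of $\bar{h}$), but your bookkeeping produces the stronger exponent $\delta^{1/2}$ in place of the paper's $\delta^{1/8}$, confirming your observation that the stated exponent is not optimized.

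One small patch is needed for uniformity in $t$: your closing step ``using $\delta \leq 1$ for $\eps$ small'' is not uniform in $t$, since for any fixed $\eps < \eps_0$ one can take $t$ large enough that $\delta = \eps^{2a_\eps}t + a_\eps^{1/3} > 1$, whereas the proposition asserts $C_\beta$ and $\eps_0$ independent of $t$. The fix is immediate from your own inputs: when $\delta > 1$, the uniform bounds $\E[(\log R_\eps)^2] \leq C_\beta$ (Corollary~\ref{co:logmom} with $p=2$, together with its version for $\Psi^{\beta_\eps, h_0}$) and $|Y_\eps| \leq \|h_0\|_\infty$ give $\E\big[|\log R_\eps - Y_\eps|^2\big] \leq C_\beta \leq C_\beta\,\delta^{1/8}$ trivially. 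The paper's interpolation route never needs this case distinction, since its chain of inequalities produces $C_\beta\,\delta^{1/8}$ for all values of $\delta$ at once; that robustness is essentially what the paper trades the sharper exponent for.
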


\begin{proposition}\label{pr:localave}
Let $0<\beta<\sqrt{2\pi}$, $r_{\eps} = \eps^{1-\gamma}$ for some $0 \leq \gamma \leq 1$ and $a_{\eps} = (\log \eps^{-1})^{-1/2}$. For any $t>0$, $x \in \R^2$, as $\eps \to 0$, 
\begin{equation}
\frac{1}{|B(x, r_{\eps})|}\int_{|y-x| \leq r_{\eps}} \log \EE_{\eps^{-1}y}\big[\Psi_{\eps^{-2(1-a_\eps)}t}^{\beta_{\eps}, 0}\big] \dd y \stackrel{d}{\to} \mathcal{N}(0, \sigma_{\gamma}^2) - \frac{1}{2}\log\left(\frac{2\pi}{2\pi - \beta^2}\right),
\label{eq:localave}\end{equation}
where $\mathcal{N}(0, \sigma_{\gamma}^2) $ is a normal distribution with mean 0 and variance $\sigma_{\gamma}^2 := \log\left( \frac{2\pi - \beta^2\gamma}{2\pi - \beta^2}\right)$.  When $0 \leq \gamma <1$, $\sigma^2_{\gamma} >0$. When $\gamma =1$, $\sigma^2_{\gamma} =0$.
\end{proposition}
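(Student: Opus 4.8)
The plan is to establish \eqref{eq:localave} by the method of moments, reading off the limiting joint law from Theorem~\ref{thm:kpzpw}. Write
\[
F^\eps(y) := \log \EE_{\eps^{-1}y}\big[\Psi_{\eps^{-2(1-a_\eps)}t}^{\beta_\eps,0}\big], \qquad A^\eps := \frac{1}{|B(x,r_\eps)|}\int_{|y-x|\le r_\eps} F^\eps(y)\,\dd y,
\]
so that $A^\eps$ is the left-hand side of \eqref{eq:localave}. Since $\eps^{-2(1-a_\eps)}t = \eps^{-2}(\eps^{2a_\eps}t)$ and $a_\eps = (\log\eps^{-1})^{-1/2} = o(1)$, identity \eqref{eq:kpzfk} with $h_0 = 0$ gives $F^\eps(y) \stackrel{law}{=} h^\eps(t_\eps, y)$ with $t_\eps := \eps^{2a_\eps}t = \eps^{o(1)}t$. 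Hence Theorem~\ref{thm:kpzpw} applies to this $t_\eps$: for any fixed finite family of (possibly $\eps$-dependent) points with pairwise distances $\eps^{(1-\zeta_{ij})+o(1)}$, the vector $\big(F^\eps(y_i)\big)_i$ converges jointly to a Gaussian vector $\big(G_i\big)_i$ with $\E G_i = -\tfrac12\log\frac{2\pi}{2\pi-\beta^2}$ and $\Cov(G_i,G_j)=\log\frac{2\pi-\beta^2\zeta_{ij}}{2\pi-\beta^2}$. The goal is to show that every centered moment of $A^\eps$ converges to that of $\mathcal{N}(0,\sigma_\gamma^2)$; since a Gaussian law is determined by its moments, the Fréchet--Shohat theorem then yields \eqref{eq:localave}.

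For the mean, translation invariance of the environment gives $\E[F^\eps(y)] = \E[F^\eps(x)] =: m^\eps$ for all $y$, and the single-point case of Theorem~\ref{thm:kpzpw} ($\zeta = 0$) together with the uniform integrability from Corollary~\ref{co:logmom} yields $m^\eps \to -\tfrac12\log\frac{2\pi}{2\pi-\beta^2}$. It remains to analyze $\E\big[(A^\eps - m^\eps)^k\big]$ for each $k \ge 2$.

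Rescaling $y_i = x + r_\eps z_i$ with $z_i \in B(0,1)$ and using that each factor is exactly centered, we obtain
\[
\E\big[(A^\eps - m^\eps)^k\big] = \frac{1}{|B(0,1)|^k}\int_{B(0,1)^k}\Phi^\eps(z)\,\dd z, \qquad \Phi^\eps(z) := \E\Big[\prod_{i=1}^k\big(F^\eps(x+r_\eps z_i)-m^\eps\big)\Big].
\]
For every $z$ with distinct coordinates, the separations satisfy $|y_i-y_j| = r_\eps|z_i-z_j| = \eps^{(1-\gamma)+o(1)}$, so $\zeta_{ij}=\gamma$ for all $i\ne j$; Theorem~\ref{thm:kpzpw} gives joint convergence of $\big(F^\eps(x+r_\eps z_i)\big)_i$ to a Gaussian vector with all pairwise covariances equal to $\sigma_\gamma^2$, and Corollary~\ref{co:logmom} with Hölder's inequality upgrades this to convergence of the joint centered moment. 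By Wick's formula the limit equals the $k$-th moment of $\mathcal{N}(0,\sigma_\gamma^2)$, namely $(k-1)!!\,\sigma_\gamma^k$ for even $k$ and $0$ for odd $k$, independently of $z$. Finally Corollary~\ref{co:logmom} and Hölder bound $|\Phi^\eps|$ by a constant depending only on $k$ and $\beta$, uniformly in $\eps$ and $z$; since coinciding coordinates form a null set in $B(0,1)^k$, dominated convergence passes the limit inside the integral. Thus $\E\big[(A^\eps-m^\eps)^k\big]$ converges to the $k$-th moment of $\mathcal{N}(0,\sigma_\gamma^2)$, which completes the proof. The degenerate case $\gamma=1$ is covered automatically: there $\sigma_\gamma^2 = 0$, all central moments of order $\ge 2$ vanish, and the limit is the constant $-\tfrac12\log\frac{2\pi}{2\pi-\beta^2}$.

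The step I expect to be the main obstacle is the interchange of the $\eps\to 0$ limit with the spatial average, i.e. upgrading the purely distributional conclusion of Theorem~\ref{thm:kpzpw} to convergence of the integrated joint moments $\Phi^\eps$. This is exactly where the estimates of the previous section are indispensable: Corollary~\ref{co:logmom} provides $L^p$ bounds on $\log \EE_{\eps^{-1}y}\big[\Psi^{\beta_\eps,0}_s\big]$ that are uniform in $\eps$, in $s \le \eps^{-2}t$, and in $y$, and these bounds simultaneously supply the uniform integrability needed to convert weak convergence into moment convergence at each fixed configuration and the uniform bound on $\Phi^\eps$ required for dominated convergence. The configurations with nearly coinciding points, which Theorem~\ref{thm:kpzpw} does not directly control, are harmless because they occupy a vanishing fraction of $B(0,1)^k$ while $\Phi^\eps$ remains bounded.
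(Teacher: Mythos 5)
Your proposal is correct and takes essentially the same route as the paper's own proof: the method of moments after the change of variables $y = x + r_\eps z$, the multi-point limit of Theorem~\ref{thm:kpzpw} with $\zeta_{ij} = \gamma$ for distinct coordinates, uniform integrability from Corollary~\ref{co:logmom} to upgrade distributional convergence to moment convergence, Wick's formula, and dominated convergence over the null set of coinciding coordinates. Your explicit check that $t_\eps = \eps^{2a_\eps}t = \eps^{o(1)}t$, so that Theorem~\ref{thm:kpzpw} indeed applies at the shortened time horizon $\eps^{-2(1-a_\eps)}t$, is a detail the paper passes over more quickly, but the arguments otherwise coincide.
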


We first use Proposition~\ref{pr:KPZdetermin} and Proposition~\ref{pr:localave} to prove Theorem~\ref{thm:main}.

Let \begin{equation}\label{eq:aepsilon}
a_{\eps} = (\log \eps^{-1})^{-1/2}.
\end{equation} 
Then we have  $a_{\eps}  = o(1)$ and $\eps^{a_\eps} = o(1)$. In particular, as $\eps \to 0$, 
\begin{equation}\label{eq:limitaeps}
C_{\beta}(\eps^{2a_{\eps}}t+a_{\eps}^{1/3})^{1/8}\to 0.
\end{equation}
By applying \eqref{eq:fkkpz} and the decomposition in \eqref{eq:decomp}, we have
\begin{equation*}
\begin{aligned}
& \frac{1}{|B(x, r_{\eps})|} \int_{|y - x|\leq r_{\eps}} h^{\eps}(t,y) \dd y  \stackrel{law}{=}  \frac{1}{|B(x, r_{\eps})|} \int_{|y - x|\leq r_{\eps}} \log \EE_{\eps^{-1}y}\big[\Psi_{\eps^{-2}t}^{\beta_{\eps}, h_0}\big] \dd y
\\&=  \frac{1}{|B(x, r_{\eps})|} \int_{|y - x|\leq r_{\eps}}  \log \frac{ \EE_{\eps^{-1}y}\big[\Psi_{\eps^{-2}t}^{\beta_{\eps}, h_0}\big]}{\EE_{\eps^{-1}y}\big[\Psi_{\eps^{-2(1-a_\eps)}t}^{\beta_{\eps}, 0}\big]} + \log \EE_{\eps^{-1}y}\big[\Psi_{\eps^{-2(1-a_\eps)}t}^{\beta_{\eps}, 0}\big] \dd y\\
& = \mathcal{I}_1 +  \mathcal{I}_2 +  \mathcal{I}_3,
\end{aligned}
\label{eq:split}
\end{equation*}
where
\begin{align*}
\mathcal{I}_1 &:= \frac{1}{|B(x, r_{\eps})|} \int_{|y - x|\leq r_{\eps}}  \log \frac{ \EE_{\eps^{-1}y}\big[\Psi_{\eps^{-2}t}^{\beta_{\eps}, h_0}\big]}{\EE_{\eps^{-1}y}\big[\Psi_{\eps^{-2(1-a_\eps)}t}^{\beta_{\eps}, 0}\big]} \dd y- \frac{1}{|B(x, r_{\eps})|} \int_{|y - x|\leq r_{\eps}}  \bar{h}(t-\eps^{2a_\eps}t, y) \dd y,\\
\mathcal{I}_2 &:=  \frac{1}{|B(x, r_{\eps})|} \int_{|y - x|\leq r_{\eps}}  \log \EE_{\eps^{-1}y}\big[\Psi_{\eps^{-2(1-a_\eps)}t}^{\beta_{\eps}, 0}\big] \dd y,\\
\mathcal{I}_3 &:= \frac{1}{|B(x, r_{\eps})|} \int_{|y - x|\leq r_{\eps}}  \bar{h}(t-\eps^{2a_\eps}t, y) \dd y.
\end{align*}

By the \replaced{Minkowski’s integral inequality}{Minkowski inequality}, Proposition~\ref{pr:KPZdetermin} and \eqref{eq:limitaeps}, when $\eps < \eps_0$,
\begin{equation*}
\begin{aligned}
\|\mathcal{I}_1\|_{L^2(\Omega)} 
& = \left\|\frac{1}{|B(x, r_{\eps})|} \int_{|y - x|\leq r_{\eps}}  \log \frac{ \EE_{\eps^{-1}y}\big[\Psi_{\eps^{-2}t}^{\beta_{\eps}, h_0}\big]}{\EE_{\eps^{-1}y}\big[\Psi_{\eps^{-2(1-a_\eps)}t}^{\beta_{\eps}, 0}\big]} \dd y - \frac{1}{|B(x, r_{\eps})|} \int_{|y - x|\leq r_{\eps}}  \bar{h}(t-\eps^{2a_\eps}t, y) \dd y \right\|_{L^2(\Omega)}
 \\&\leq  \frac{1}{|B(x, r_{\eps})|} \int_{|y - x|\leq r_{\eps}}\left\| \log \frac{ \EE_{\eps^{-1}y}\big[\Psi_{\eps^{-2}t}^{\beta_{\eps}, h_0}\big]}{\EE_{\eps^{-1}y}\big[\Psi_{\eps^{-2(1-a_\eps)}t}^{\beta_{\eps}, 0}\big]} - \bar{h}(t-\eps^{2a_\eps}t, y) \right\|_{L^2(\Omega)} \dd y\\
& \leq  \frac{1}{|B(x, r_{\eps})|} \int_{|y - x|\leq r_{\eps}} C_{\beta}^{1/2} (\eps^{2a_{\eps}}t+a_{\eps}^{1/3})^{1/16} \dd y\\
& = C_{\beta}^{1/2} (\eps^{2a_{\eps}}t+a_{\eps}^{1/3})^{1/16} \to 0, \quad \text{as } \eps \to 0.
\end{aligned}
\end{equation*}
In particular, $\mathcal{I}_1$ converges to 0 in distribution.

By Proposition~\ref{pr:localave}, $\mathcal{I}_2$ converges to $ \mathcal{N}(0, \sigma_{\gamma}^2) - \frac{1}{2}\log\left(\frac{2\pi}{2\pi - \beta^2}\right)$ in distribution for all $0 \leq \gamma \leq 1$.

The last term $\mathcal{I}_3$ is deterministic. By Lebesgue's dominated convergence theorem, if $0 \leq \gamma < 1$, $\mathcal{I}_3 \to \bar{h}(t,x)$ as $\eps \to 0$. If $\gamma = 1$,
\[\mathcal{I}_3 \to \frac{1}{|B(x,1)|} \int_{|y-x|\leq 1} \bar{h}(t,y) \dd y, \quad \text{as } \eps \to 0.\]

Therefore, by proving Proposition~\ref{pr:KPZdetermin} and Proposition~\ref{pr:localave}, we would have proved Theorem~\ref{thm:main}.\deleted{ The rest of the paper is to prove these two propositions. 
}
In order to prove Proposition~\ref{pr:KPZdetermin}, we shall first prove Proposition~\ref{pr:SHEdetermin}.

\subsection{Proof of Proposition \ref{pr:SHEdetermin}}
Hereafter, we always assume $0<\beta<\sqrt{2\pi}$. By the Markov property, for any $0 <s < \eps^{-2}t$,
\begin{equation*}
\begin{aligned}
&\EE_{\eps^{-1}x}\big[\Psi_{\eps^{-2}t}^{\beta_{\eps}, h_0}\big] \\& = \EE_{\eps^{-1}x}\Big[\exp \Big\{h_0(\eps B_{\eps^{-2}t})+\beta_\eps \int_0^{\eps^{-2}t} \int_{\R^2} \phi(B_r-y)\xi(r, y) \dd y \dd r  -\frac{1}{2}\beta_\eps^2 \eps^{-2}t \|\phi\|_{L^2(\R^2)}^2\Big\}\Big]\\
&= \EE_{\eps^{-1}x}\Big[\exp \Big\{h_0(\eps B_{\eps^{-2}t})+\beta_\eps\int_s^{\eps^{-2}t} \int_{\R^2} \phi(B_r-y)\xi(r, y) \dd y \dd r \\
&\quad +\beta_\eps\int_{0}^{s} \int_{\R^2} \phi(B_r-y)\xi(r, y) \dd y \dd r -\frac{1}{2}\beta_\eps^2 \eps^{-2}t \|\phi\|_{L^2(\R^2)}^2\Big\}\Big]\\
&= \int_{z \in \R^2} 
\rho_{s}(\eps^{-1}x,z) \EE_{0, \eps^{-1}x}^{s, z}\Big[\exp \Big\{\beta_\eps\int_{0}^{s} \int_{\R^2} \phi(B_r^B-y)\xi(r, y) \dd y \dd r - \frac{1}{2}\beta_\eps^2 s \|\phi\|_{L^2(\R^2)}^2\Big\}\Big]\\
 &\quad \EE_{z}\Big[\exp \Big\{h_0(\eps \tilde{B}_{\eps^{-2}t-s})+\beta_\eps\int_0^{\eps^{-2}t-s} \int_{\R^2} \phi(\tilde{B}_r-y)\xi(r+s, y) \dd y \dd r -\frac{1}{2}\beta_\eps^2 (\eps^{-2}t-s) \|\phi\|_{L^2(\R^2)}^2\Big\}\Big] \dd z\\
&=  \int_{z \in \R^2} 
\rho_{s}(\eps^{-1}x,z) \EE_{0, \eps^{-1}x}^{s, z}\Big[\Psi_{s}^{\beta_{\eps}, 0} (B^B, \xi) \Big]\EE_z\Big[\Psi_{\eps^{-2}t-s}^{\beta_{\eps}, h_0} (\tilde{B}, \xi') \Big] \dd z.
\end{aligned}
\end{equation*}
Here $\xi'$ is a time shift of $\xi$ by $s$, i.e. $\xi'(r, \cdot) = \xi(s+r, \cdot)$ \replaced{,}{.} $\{B^B_r: 0 \leq r \leq s\}$ is a Brownian bridge starting from $ \eps^{-1}x$ at time 0 and ending at $z$ at time $s$\replaced{, and}{ .} $\{\tilde{B}_r: 0 \leq r \leq  \eps^{-2}t-s\}$ is a Brownian motion starting from $z$\replaced{, where}{.} $B^B$ and $\tilde{B}$ are independent.

For any $s>0$,
\begin{equation}\label{eq:denominatorterm}
\EE_{\eps^{-1}x}\big[\Psi_{s}^{\beta_{\eps}, 0}\big] = \int_{z \in \R^2} \rho_s(\eps^{-1}x,z)\EE_{0, \eps^{-1}x}^{s, z}\Big[\Psi_{s}^{\beta_{\eps}, 0} (B^B, \xi) \Big]\dd z.
\end{equation}
\replaced{In the rest of the paper, we will always set }{Let }\[s = \eps^{-2(1-a_\eps)}t,\]
where $a_\eps$ is defined as in \eqref{eq:aepsilon} and $\eps^2s=\eps^{2a_\eps}=o(1)$. By \eqref{eq:denominatorterm}, the ratio expression in \eqref{eq:conv2SHE} satisfies that
\begin{equation}
 \frac{ \EE_{\eps^{-1}x}\big[\Psi_{\eps^{-2}t}^{\beta_{\eps}, h_0}\big]}{\EE_{\eps^{-1}x}\big[\Psi_{\eps^{-2(1-a_\eps)}t}^{\beta_{\eps}, 0}\big]} =  \frac{\int_{z \in \R^2} 
\rho_{s}(\eps^{-1}x,z) \EE_{0, \eps^{-1}x}^{s, z}\Big[\Psi_{s}^{\beta_{\eps}, 0} (B^B, \xi) \Big]\EE_z\Big[\Psi_{\eps^{-2}t-s}^{\beta_{\eps}, h_0} (\tilde{B}, \xi') \Big] \dd z}{\int_{z \in \R^2} \rho_s(\eps^{-1}x,z)\EE_{0, \eps^{-1}x}^{s, z}\Big[\Psi_{s}^{\beta_{\eps}, 0} (B^B, \xi) \Big]\dd z} \label{eq:averagedpol}.
\end{equation}
We could thus interprete the ratio $ \frac{ \EE_{\eps^{-1}x}\big[\Psi_{\eps^{-2}t}^{\beta_{\eps}, h_0}\big]}{\EE_{\eps^{-1}x}\big[\Psi_{\eps^{-2(1-a_\eps)}t}^{\beta_{\eps}, 0}\big]} $ as a \added{(randomly)} weighted average of $\EE_z\Big[\Psi_{\eps^{-2}t-s}^{\beta_{\eps}, h_0}  (\tilde{B}, \xi') \Big] $ over $z \in \R^2$.

\deleted{To simplify the notations in the computation, we set $s:= \eps^{-2(1-a_\eps)}t$ hereafter. }
Let $\mathscr{F}_{s}$ be the $\sigma$-algebra generated by the environment $\{\xi(r, \cdot): 0 \leq r < s \}$.  Let \replaced{$\EEE^{\mathscr{F}_{s}}$, $\mathbf{{P}}^{\mathscr{F}_{s}}$, $\mathbf{{V}ar}^{\mathscr{F}_{s}}$, $\mathbf{{C}ov}^{\mathscr{F}_{s}}$ }{$\E'$, $\Prob'$, $\Var'$, $\Cov'$} denote the conditional expectation, conditional probability, conditional variance and conditional covariance given $\mathscr{F}_{s}$, respectively. In particular, the noise (with a time shift by $s$) $\xi'$ is independent of the filtration $\mathscr{F}_{s}$.

We first show that the conditional expectation of the ratio $ \frac{ \EE_{\eps^{-1}x}\big[\Psi_{\eps^{-2}t}^{\beta_{\eps}, h_0}\big]}{\EE_{\eps^{-1}x}\big[\Psi_{\eps^{-2(1-a_\eps)}t}^{\beta_{\eps}, 0}\big]} $ in \eqref{eq:averagedpol} w.r.t. $\mathscr{F}_{s}$ is approximately deterministic as $\eps \to 0$.
\begin{lemma}\label{le:condmean}
Let $\bar{h}(t,x)$ be as in \eqref{eq:deterkpz} and $a_{\eps}<1$. There exists some $C_{\beta}>0$ such that for any $t>0$, $x \in \R^2$,  
\begin{equation}
\left\|\mathbf{{E}}^{\mathscr{F}_{s}} \left\{  \frac{ \EE_{\eps^{-1}x}\big[\Psi_{\eps^{-2}t}^{\beta_{\eps}, h_0}\big]}{\EE_{\eps^{-1}x}\big[\Psi_{\eps^{-2(1-a_\eps)}t}^{\beta_{\eps}, 0}\big]} \right\} - \exp\big[\bar{h}(t- \eps^{2a_{\eps}}t,x)\big]\right\|_{L^2(\Omega)} \leq C_{\beta} \eps^{a_{\eps}}\sqrt{t}.\label{eq:condmean}
\end{equation}
\end{lemma}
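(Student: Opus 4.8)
The plan is to exploit the factorized representation \eqref{eq:averagedpol} together with the independence of the time-shifted noise $\xi'$ from $\mathscr{F}_{s}$. Write $D = \EE_{\eps^{-1}x}\big[\Psi_{s}^{\beta_{\eps},0}\big]$ and, for an endpoint $z$, set $W(z) = \EE_{0,\eps^{-1}x}^{s,z}\big[\Psi_{s}^{\beta_{\eps},0}(B^B,\xi)\big]$ and $U(z) = \EE_z\big[\Psi_{\eps^{-2}t-s}^{\beta_{\eps},h_0}(\tilde{B},\xi')\big]$. Both $D$ and each $W(z)$ are $\mathscr{F}_{s}$-measurable, while $U(z)$ is independent of $\mathscr{F}_{s}$. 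Pulling $\EEE^{\mathscr{F}_{s}}$ through the (measurable) weights $\rho_{s}(\eps^{-1}x,z)W(z)$ and using $\EEE^{\mathscr{F}_{s}}[U(z)] = \E[U(z)]$, the first step is the identity
\[
\EEE^{\mathscr{F}_{s}}\!\left[\frac{\EE_{\eps^{-1}x}[\Psi_{\eps^{-2}t}^{\beta_{\eps},h_0}]}{\EE_{\eps^{-1}x}[\Psi_{\eps^{-2(1-a_\eps)}t}^{\beta_{\eps},0}]}\right] = \frac{1}{D}\int_{\R^2}\rho_{s}(\eps^{-1}x,z)\,W(z)\,g(\eps z)\,\dd z,\qquad g(\eps z):=\E[U(z)].
\]
Because the exponential normalization in \eqref{eq:psi} makes $\E_{\xi'}[\Psi_{\eps^{-2}t-s}^{\beta_{\eps},h_0}] = u_0(\eps\tilde{B}_{\eps^{-2}t-s})$ pathwise (where $u_0 = e^{h_0}$), Brownian scaling together with $\eps^2 s = \eps^{2a_{\eps}}t$ gives $g = \heatSemi_{t-\eps^{2a_{\eps}}t}\,u_0$, the heat semigroup applied to $u_0$. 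By the Hopf–Cole transform for \eqref{eq:deterkpz} this is precisely $g(\cdot) = \exp[\bar{h}(t-\eps^{2a_{\eps}}t,\cdot)]$, so $g(x)$ is the target in \eqref{eq:condmean}.

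Next I would view the right-hand side as a random, nonnegative-weighted average of $g(\eps z)$ and subtract the target $g(x)$. Since $u_0 = e^{h_0}$ is bounded and Lipschitz (with constant $L = e^{\|h_0\|_{\infty}}$ times the Lipschitz constant of $h_0$) and the heat semigroup does not increase the Lipschitz constant, $g$ is Lipschitz with constant $L$ uniformly in $\eps$. Hence $|g(\eps z) - g(x)| \leq L\eps\,|z - \eps^{-1}x|$, and inserting the endpoint factor $L\eps\,|B_s - \eps^{-1}x|$ into \eqref{eq:denominatorterm} identifies the numerator as $\int \rho_{s}(\eps^{-1}x,z)W(z)\,L\eps|z-\eps^{-1}x|\,\dd z = \EE_{\eps^{-1}x}\big[\Psi_{s}^{\beta_{\eps},0}\,L\eps|B_s-\eps^{-1}x|\big] =: N$. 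This yields the pointwise bound
\[
\left|\,\EEE^{\mathscr{F}_{s}}\!\left[\frac{\EE_{\eps^{-1}x}[\Psi_{\eps^{-2}t}^{\beta_{\eps},h_0}]}{\EE_{\eps^{-1}x}[\Psi_{\eps^{-2(1-a_\eps)}t}^{\beta_{\eps},0}]}\right] - \exp[\bar{h}(t-\eps^{2a_{\eps}}t,x)]\,\right| \leq \frac{N}{D},
\]
so it remains to bound $\|N/D\|_{L^2(\Omega)}$.

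The final step is a Hölder argument on this ratio of random variables. Fix $p \in (2, p_\beta)$ with $p_\beta = 1 + 2\pi/\beta^2 > 2$ and let $q = (1/2 - 1/p)^{-1} < \infty$; then $\|N/D\|_{L^2(\Omega)} \leq \|N\|_{L^p(\Omega)}\,\|D^{-1}\|_{L^q(\Omega)}$, where $\|D^{-1}\|_{L^q(\Omega)}$ is bounded by Lemma~\ref{le:negmom}. For the numerator, Minkowski's integral inequality and the uniform bridge moment bound \eqref{eq:higherbound2} (valid for every endpoint $z$ since $s \leq \eps^{-2}t$) give
\[
\|N\|_{L^p(\Omega)} \leq L\eps\!\int_{\R^2}\!\rho_{s}(\eps^{-1}x,z)\,|z-\eps^{-1}x|\,\|W(z)\|_{L^p(\Omega)}\,\dd z \leq C_{\beta,p}\,L\eps\!\int_{\R^2}\!\rho_{s}(\eps^{-1}x,z)\,|z-\eps^{-1}x|\,\dd z,
\]
and the remaining Gaussian first absolute moment equals $\sqrt{\pi s/2}$. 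Using $\eps^2 s = \eps^{2a_{\eps}}t$, one has $\eps\sqrt{s} = \eps^{a_{\eps}}\sqrt{t}$, whence $\|N\|_{L^p(\Omega)} \leq C_{\beta}\,\eps^{a_{\eps}}\sqrt{t}$ and the claimed estimate \eqref{eq:condmean} follows. I expect the main obstacle to be the ratio structure: $N$ and $D$ share the same random polymer weights, so the bound cannot come from any naive independence. The resolution is to isolate the purely deterministic Lipschitz gain $\eps\sqrt{s} = \eps^{a_{\eps}}\sqrt{t}$ produced by the spatial spread of the bridge endpoint, and to absorb all the randomness through the uniform $L^p$-bound \eqref{eq:higherbound2} on the bridge partition function and the negative moments of $D$ furnished by Lemma~\ref{le:negmom}.
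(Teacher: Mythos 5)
Your proposal is correct and takes essentially the same approach as the paper's proof in Appendix~A: both pull $\EEE^{\mathscr{F}_{s}}$ through the weighted-average representation \eqref{eq:averagedpol} using the independence of $\xi'$ from $\mathscr{F}_{s}$, identify $\E[U(z)]$ with $\exp[\bar{h}(t-\eps^{2a_\eps}t,\eps z)]$ via Feynman--Kac/Hopf--Cole, exploit the Lipschitz bound $|g(\eps z)-g(x)|\leq C\eps|z-\eps^{-1}x|$ coming from $h_0$ bounded and Lipschitz, and absorb the random weights through H\"older with the bridge moment bound \eqref{eq:higherbound2} and the negative moments of Lemma~\ref{le:negmom}, with the Gaussian first absolute moment producing $\eps\sqrt{s}=\eps^{a_\eps}\sqrt{t}$. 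The only difference is an immaterial reordering: you bound the error pointwise by $N/D$ and apply H\"older before Minkowski, whereas the paper applies Minkowski's integral inequality first and then H\"older to each $\bigl\|W(z)/D\bigr\|_{L^2(\Omega)}$.
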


\added{We leave the proof of Lemma~\ref{le:condmean} to Appendix~\ref{ap:meanlemma}.}

We next show that the expectation of the conditional variance \[\E \left[\mathbf{{V}ar}^{\mathscr{F}_{s}} \left\{  \frac{ \EE_{\eps^{-1}x}\big[\Psi_{\eps^{-2}t}^{\beta_{\eps}, h_0}\big]}{\EE_{\eps^{-1}x}\big[\Psi_{\eps^{-2(1-a_\eps)}t}^{\beta_{\eps}, 0}\big]} \right\}\right]\] converges to 0 as $\eps \to 0$. \added{This result is basically saying that the randomness from the shifted white noise $\xi'$ is not contributing to the randomness in the ratio $ \frac{ \EE_{\eps^{-1}x}\big[\Psi_{\eps^{-2}t}^{\beta_{\eps}, h_0}\big]}{\EE_{\eps^{-1}x}\big[\Psi_{\eps^{-2(1-a_\eps)}t}^{\beta_{\eps}, 0}\big]} $ as $\eps \to 0$.}  In fact, we have the following bound and we leave its proof to Appendix~\ref{ap:varlemma}.
\begin{lemma}\label{le:condvar}
Let $a_{\eps}= o(1)$. There exists some $C_{\beta} >0$ and $\eps_0 >0$ such that for any $x \in \R^2$ and $t >0$, when $0 < \eps < \eps_0$,
\[
\E \left[\mathbf{{V}ar}^{\mathscr{F}_{s}} \left\{  \frac{ \EE_{\eps^{-1}x}\big[\Psi_{\eps^{-2}t}^{\beta_{\eps}, h_0}\big]}{\EE_{\eps^{-1}x}\big[\Psi_{\eps^{-2(1-a_\eps)}t}^{\beta_{\eps}, 0}\big]} \right\}\right] \leq C_{\beta}  a_{\eps}^{1/3}.\]
\end{lemma}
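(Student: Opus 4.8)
The plan is to use the representation \eqref{eq:averagedpol} of the ratio as a randomly weighted average of a shifted polymer, and to exploit that the weights are $\mathscr{F}_s$-measurable while the shifted noise $\xi'$ is independent of $\mathscr{F}_s$. With $s=\eps^{-2(1-a_\eps)}t$ and $T=\eps^{-2}t-s$, set
\[
w_\eps(z)=\frac{\rho_s(\eps^{-1}x,z)\,\EE_{0,\eps^{-1}x}^{s,z}\big[\Psi_s^{\beta_\eps,0}(B^B,\xi)\big]}{\EE_{\eps^{-1}x}\big[\Psi_s^{\beta_\eps,0}\big]},\qquad U_\eps(z)=\EE_z\big[\Psi_{T}^{\beta_\eps,h_0}(\tilde B,\xi')\big],
\]
so that the ratio in \eqref{eq:conv2SHE} equals $R:=\int_{\R^2}w_\eps(z)U_\eps(z)\dd z$ with $\int_{\R^2}w_\eps=1$. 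Since each $w_\eps(z)$ depends only on $\xi|_{[0,s)}$ and the $\mathscr{F}_s$-conditional law of $\xi'$ coincides with its unconditional law, the conditional variance is
\[
\mathbf{Var}^{\mathscr{F}_s}(R)=\int_{\R^2}\!\int_{\R^2}w_\eps(z)\,w_\eps(z')\,\Cov\big(U_\eps(z),U_\eps(z')\big)\dd z\,\dd z',
\]
where $\Cov$ denotes the deterministic covariance in $\xi'$; taking $\E$ then places the remaining randomness on the weights alone, since the $\Cov$ factor is non-random.

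Next I would replace the random weights by the heat kernel. As the denominator $N_\eps=\EE_{\eps^{-1}x}[\Psi_s^{\beta_\eps,0}]$ is independent of $z,z'$, Hölder's inequality with exponents $2/q+1/p=1$ and $q\in(2,p_\beta)$ --- a nonempty range because $p_\beta=1+2\pi/\beta^2>2$ for $\beta<\sqrt{2\pi}$ --- separates the bridge numerators from $N_\eps^{-2}$. The bridge moment bound \eqref{eq:higherbound2} and the negative-moment bound of Lemma~\ref{le:negmom} then give the deterministic domination
\[
\E\big[w_\eps(z)\,w_\eps(z')\big]\le C_\beta\,\rho_s(\eps^{-1}x,z)\,\rho_s(\eps^{-1}x,z'),
\]
so that $\E[\mathbf{Var}^{\mathscr{F}_s}(R)]\le C_\beta\int\!\int\rho_s(\eps^{-1}x,z)\rho_s(\eps^{-1}x,z')\Cov(U_\eps(z),U_\eps(z'))\dd z\dd z'$.

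The covariance is a pure second-moment computation. Since $\E[U_\eps(z)]=\EE_z[e^{h_0(\eps\tilde B_T)}]$, the chaos pairing behind \eqref{eq:ucov} yields
\[
\Cov\big(U_\eps(z),U_\eps(z')\big)=\EE_z\otimes\EE_{z'}\Big[e^{h_0(\eps\tilde B_T)+h_0(\eps\tilde B'_T)}\big(e^{\beta_\eps^2\int_0^T V(\tilde B_r-\tilde B'_r)\dd r}-1\big)\Big].
\]
The interaction factor is nonnegative, so the bounded initial datum is harmlessly estimated by $e^{2\|h_0\|_\infty}$, and the problem reduces to the translation-invariant $h_0=0$ two-point function $F_\eps(w)=\EE_{w/\sqrt2}\big[e^{\beta_\eps^2\int_0^T V(\sqrt2 B_r)\dd r}\big]-1\ge0$. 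Performing the Gaussian convolution $\int\rho_s(\eps^{-1}x,z)\rho_s(\eps^{-1}x,z-w)\dd z=\rho_{2s}(w)$ leaves
\[
\E\big[\mathbf{Var}^{\mathscr{F}_s}(R)\big]\le C_\beta\,e^{2\|h_0\|_\infty}\int_{\R^2}\rho_{2s}(w)\,F_\eps(w)\dd w .
\]

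The final, and hardest, step is to show this integral is $O(a_\eps)$; this is exactly the statement that two polymers started at microscopic separation $\sim\sqrt s=\eps^{-(1-a_\eps)}\sqrt t$ almost never interact. Expanding $F_\eps$ in its collision series and integrating the starting point against $\rho_{2s}$ --- which, after $x_0=w/\sqrt2$, replaces the first kernel $\rho_{r_1}(x_0,x_1)$ by $\rho_{s+r_1}(x_1)$ via Chapman--Kolmogorov --- turns the first collision into
\[
\beta_\eps^2\int_0^T\!\!\int_{\R^2}\rho_{s+r_1}(x_1)V(\sqrt2 x_1)\dd x_1\,\dd r_1\le \frac{\beta_\eps^2}{4\pi}\log\Big(1+\frac{T}{s}\Big)\sim\frac{\beta^2 a_\eps}{2\pi},
\]
using $T/s=\eps^{-2a_\eps}$ and $\beta_\eps^2=\beta^2/\log\eps^{-1}$; this is precisely the size predicted by $\sigma^2_{1-a_\eps}$ in Theorem~\ref{thm:kpzpw}. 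Each later collision contributes at most $\beta_\eps^2\int_0^T(\tfrac{1}{4\pi r}\wedge\|V\|_\infty)\dd r\to\beta^2/2\pi<1$, so the series sums geometrically to a factor $\tfrac{2\pi}{2\pi-\beta^2}$ and $\int\rho_{2s}(w)F_\eps(w)\dd w\le C_\beta a_\eps$. Since $a_\eps\le1$ this is stronger than the asserted $C_\beta a_\eps^{1/3}$, and the $\tfrac{1}{4\pi r}\wedge\|V\|_\infty$ bound keeps $C_\beta$ uniform in $t$ and $x$. The points requiring care are the uniform-in-$\eps$ convergence of the geometric sum (where $\beta<\sqrt{2\pi}$ enters) and the Hölder domination of the weights (where $q<p_\beta$ again forces $\beta<\sqrt{2\pi}$).
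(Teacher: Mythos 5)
Your proposal is correct, and it follows the paper's skeleton in most respects: the same representation of the ratio as a randomly weighted average over $z$, the same observation that the weights are $\mathscr{F}_{s}$-measurable while $\xi'$ is independent of $\mathscr{F}_{s}$ (so the conditional variance is a weighted double integral of the deterministic covariance $\mathcal{C}^{\eps}_{z,z'}$), the same reduction of that covariance to the two-point collision functional $\EE_z\otimes\EE_{z'}\big[e^{h_0+h_0}\big(e^{\beta_\eps^2\int_0^T V(\tilde B^1_r-\tilde B^2_r)\dd r}-1\big)\big]$, and the same mechanism for extracting the smallness: one collision integral against the spread-out kernel produces $\frac{\beta_\eps^2}{4\pi}\log\frac{s+T}{s}=\frac{\beta^2 a_\eps}{2\pi}$, while the remaining collisions sum geometrically with ratio close to $\beta^2/2\pi<1$. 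Where you genuinely diverge from the paper is in handling the random denominator $N_\eps=\EE_{\eps^{-1}x}\big[\Psi_s^{\beta_\eps,0}\big]$. The paper does \emph{not} use your three-factor H\"older domination $\E[w_\eps(z)w_\eps(z')]\leq C_\beta\,\rho_s(\eps^{-1}x,z)\rho_s(\eps^{-1}x,z')$; instead it splits on the event $\{N_\eps\leq b_\eps\}$, bounding the conditional variance by the uniform constant $\sup_{z,z'}\mathcal{C}^{\eps}_{z,z'}\leq C^{(1)}_\beta$ on the bad event (whose probability is $\leq C^{(2)}_\beta b_\eps$ by Markov and Lemma~\ref{le:negmom}) and by $b_\eps^{-2}$ times the kernel-weighted integral on the good event, then optimizes $b_\eps=a_\eps^{1/3}$ --- this event-splitting is precisely the origin of the exponent $1/3$ in the statement. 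Your route, combining \eqref{eq:higherbound2} with $q\in(2,p_\beta)$ and the all-order negative moments of Lemma~\ref{le:negmom} via H\"older with $2/q+1/p=1$, is valid (correlation between numerator and denominator is irrelevant to H\"older, and $p_\beta>2$ exactly when $\beta<\sqrt{2\pi}$) and yields the strictly stronger bound $C_\beta a_\eps$, which of course implies the stated $C_\beta a_\eps^{1/3}$; it also makes the intermediate Lemma~\ref{le:2ndmombb}-based step \eqref{eq:expcondvar2} unnecessary. Your kernel estimates are a cruder but sufficient version of the paper's: you sup-bound the spatial kernels from the far end and decouple the time increments, extracting $a_\eps$ from the \emph{first} collision time via $\int\rho_{2s}(w)\rho_{r_1}(w/\sqrt2,x_1)\dd w=\rho_{s+r_1}(x_1)$, whereas the paper propagates the kernel through every collision using \eqref{eq:ineq1}--\eqref{eq:ineq2} and harvests $a_\eps$ from the \emph{last} time integral; both are correct. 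One small caveat: your closing claim that the constant is uniform in $t$ is not quite right --- the geometric ratio carries a $\log(4\pi\|V\|_\infty t)/\log\eps^{-1}$ correction, so $\eps_0$ (or $C_\beta$) depends on $t$ --- but the paper's own proof has the identical feature (its $C_0$ depends on $V$ and $t$), so this is not a gap relative to the paper.
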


\replaced{Now by the law of total expectation, we have }{Since}
\begin{equation*}
\begin{aligned}
&\E\left[\frac{ \EE_{\eps^{-1}x}\big[\Psi_{\eps^{-2}t}^{\beta_{\eps}, h_0}\big]}{\EE_{\eps^{-1}x}\big[\Psi_{\eps^{-2(1-a_\eps)}t}^{\beta_{\eps}, 0}\big]}  - \exp\left[\bar{h}(t- \eps^{2a_{\eps}}t,x)\right]\right]^2 \\
& = \E \left[\left(\mathbf{{E}}^{\mathscr{F}_{s}} \left\{  \frac{ \EE_{\eps^{-1}x}\big[\Psi_{\eps^{-2}t}^{\beta_{\eps}, h_0}\big]}{\EE_{\eps^{-1}x}\big[\Psi_{\eps^{-2(1-a_\eps)}t}^{\beta_{\eps}, 0}\big]} \right\} - \exp\big[\bar{h}(t- \eps^{2a_{\eps}}t,x)\big]\right)^2\right] + \E \left[\mathbf{{V}ar}^{\mathscr{F}_{s}} \left\{  \frac{ \EE_{\eps^{-1}x}\big[\Psi_{\eps^{-2}t}^{\beta_{\eps}, h_0}\big]}{\EE_{\eps^{-1}x}\big[\Psi_{\eps^{-2(1-a_\eps)}t}^{\beta_{\eps}, 0}\big]} \right\}\right],
\end{aligned}
\end{equation*}
which makes \eqref{eq:conv2SHE} a direct result of Lemma~\ref{le:condmean} and Lemma~\ref{le:condvar}.

\subsection{Proof of Proposition \ref{pr:KPZdetermin}}
We then use Proposition \ref{pr:SHEdetermin} to prove Proposition \ref{pr:KPZdetermin}. We first start with the $L^{1/2}(\Omega)$ norm.

\begin{lemma}
Under the same assumptions as in Proposition \ref{pr:KPZdetermin}, there exist some $C_{\beta}>0$ and $\eps_0>0$, such that when $0 < \eps < \eps_0$, we have 
\begin{equation}
\E\left[\Big| \log \frac{ \EE_{\eps^{-1}x}\big[\Psi_{\eps^{-2}t}^{\beta_{\eps}, h_0}\big]}{\EE_{\eps^{-1}x}\big[\Psi_{\eps^{-2(1-a_\eps)}t}^{\beta_{\eps}, 0}\big]}  - \bar{h}(t- \eps^{2a_{\eps}}t,x)\Big|^{1/2}\right] \leq C_{\beta} (\eps^{2a_{\eps}}t+a_{\eps}^{1/3})^{1/4}.
\label{eq:conv2KPZhalf}\nonumber
\end{equation}
\label{le:cov2kpzhalf}
\end{lemma}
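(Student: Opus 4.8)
The plan is to transfer the $L^2$ control on the difference of the two exponentials furnished by Proposition~\ref{pr:SHEdetermin} into an $L^{1/2}$ control on the difference of their logarithms, paying for the non-Lipschitzness of $\log$ near the origin with the negative-moment estimates of Section~2. Abbreviate the ratio by $U_\eps := \EE_{\eps^{-1}x}[\Psi_{\eps^{-2}t}^{\beta_{\eps}, h_0}] / \EE_{\eps^{-1}x}[\Psi_{\eps^{-2(1-a_\eps)}t}^{\beta_{\eps}, 0}]$, set $D_\eps := \eps^{2a_{\eps}}t + a_{\eps}^{1/3}$, and write $\bar h := \bar h(t-\eps^{2a_\eps}t,x)$, so that Proposition~\ref{pr:SHEdetermin} reads $\E|U_\eps - e^{\bar h}|^2 \leq C_\beta D_\eps$. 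Since $h_0$ is bounded, $\bar u := e^{\bar h}$ solves the deterministic heat equation with initial datum $e^{h_0}$, so by the maximum principle $\bar h$ is bounded above and below by constants depending only on $\|h_0\|_\infty$, uniformly in $\eps$; I will treat $e^{\bar h}$ as a bounded deterministic factor.

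The first step is the elementary inequality, valid for $u,v>0$ and obtained from the mean value theorem for $\log$,
\[
|\log u - \log v|^{1/2} \leq \frac{|u-v|^{1/2}}{(u\wedge v)^{1/2}} \leq |u-v|^{1/2}\big(u^{-1/2}+v^{-1/2}\big).
\]
Taking $u=U_\eps$ and $v=e^{\bar h}$ gives
\[
\E\big|\log U_\eps - \bar h\big|^{1/2} \leq \E\big[|U_\eps-e^{\bar h}|^{1/2} e^{-\bar h/2}\big] + \E\big[|U_\eps-e^{\bar h}|^{1/2} U_\eps^{-1/2}\big].
\]
For the first term, $e^{-\bar h/2}$ is bounded and, by Jensen's inequality, $\E[|U_\eps-e^{\bar h}|^{1/2}] \leq (\E|U_\eps-e^{\bar h}|^2)^{1/4} \leq (C_\beta D_\eps)^{1/4}$, which is already of the claimed order.

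The second term is the crux and is handled by Hölder's inequality with exponents $(4,4/3)$:
\[
\E\big[|U_\eps-e^{\bar h}|^{1/2} U_\eps^{-1/2}\big] \leq \big(\E|U_\eps-e^{\bar h}|^2\big)^{1/4}\big(\E\,U_\eps^{-2/3}\big)^{3/4} \leq (C_\beta D_\eps)^{1/4}\big(\E\,U_\eps^{-2/3}\big)^{3/4}.
\]
It remains to bound $\E\,U_\eps^{-2/3}$ uniformly in $\eps$. Writing $U_\eps^{-2/3}=(\EE_{\eps^{-1}x}[\Psi_{\eps^{-2}t}^{\beta_{\eps}, h_0}])^{-2/3}(\EE_{\eps^{-1}x}[\Psi_{\eps^{-2(1-a_\eps)}t}^{\beta_{\eps}, 0}])^{2/3}$ and applying Cauchy--Schwarz splits this into a negative moment of order $4/3$ of the numerator, controlled by Lemma~\ref{le:negmom} in its $\Psi^{\beta_\eps,h_0}$ version, and a positive moment of order $4/3$ of the denominator, controlled by Lemma~\ref{le:higher}; the latter requires $4/3<p_\beta=1+2\pi/\beta^2$, which holds because $\beta<\sqrt{2\pi}$ forces $p_\beta>2$. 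Combining the two terms yields the claimed bound $C_\beta (\eps^{2a_\eps}t+a_\eps^{1/3})^{1/4}$. The main obstacle is exactly this negative-moment control: $\log$ is steep precisely where $U_\eps$ is small, so one must establish $\E\,U_\eps^{-2/3}<\infty$ uniformly, and this is where subcriticality enters, since $p_\beta>2$ provides just enough integrability of the denominator partition function to close the Cauchy--Schwarz split. Note also that the Hölder exponent $4$ places the difference factor exactly at the $L^2$ level controlled by Proposition~\ref{pr:SHEdetermin}, so no power beyond $D_\eps^{1/4}$ is lost.
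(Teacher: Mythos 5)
Your proof is correct and follows essentially the same route as the paper: the same elementary inequality $|\log u - \log v|^{1/2} \le |u-v|^{1/2}\left(u^{-1/2}+v^{-1/2}\right)$, followed by a H\"older/Cauchy--Schwarz split that places the difference factor exactly at the $L^2$ level of Proposition~\ref{pr:SHEdetermin} and controls the weight via the negative-moment bound of Lemma~\ref{le:negmom} (in its bounded-$h_0$ version) together with second-moment bounds; the paper works with exponents $(2,2)$ and $\E\left[U_\eps^{-1}\right]$ where you use $(4,4/3)$ and $\E\left[U_\eps^{-2/3}\right]$, an inessential rearrangement yielding the same rate $D_\eps^{1/4}$. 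One minor citation slip: Lemma~\ref{le:higher} is stated only for $2 \le p < p_\beta$, so it does not literally cover the $4/3$-moment of the denominator, but that moment follows at once from the second-moment bound \eqref{eq:general2ndmomentbound} via Jensen's inequality, so nothing is lost.
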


\begin{proof}
For any $a, b >0$, we have  $|\log a - \log b|^{1/2} \leq \sqrt{|1-\frac{a}{b}|} +  \sqrt{|1-\frac{b}{a}|}=\sqrt{|a-b|}(a^{-1/2}+ b^{-1/2})$. Thus
\begin{equation*}
\begin{aligned}
&\E\left[\left| \log \frac{ \EE_{\eps^{-1}x}\big[\Psi_{\eps^{-2}t}^{\beta_{\eps}, h_0}\big]}{\EE_{\eps^{-1}x}\big[\Psi_{s}^{\beta_{\eps}, 0}\big]}  - \bar{h}(t- \eps^{2a_{\eps}}t,x)\right|^{\frac{1}{2}}\right]\\ 
& \leq \E\left\{\left|  \frac{ \EE_{\eps^{-1}x}\big[\Psi_{\eps^{-2}t}^{\beta_{\eps}, h_0}\big]}{\EE_{\eps^{-1}x}\big[\Psi_{s}^{\beta_{\eps}, 0}\big]}  -\exp [\bar{h}(t- \eps^{2a_{\eps}}t,x)] \right|^{\frac{1}{2}} \left[\left( \frac{ \EE_{\eps^{-1}x}\big[\Psi_{\eps^{-2}t}^{\beta_{\eps}, h_0}\big]}{\EE_{\eps^{-1}x}\big[\Psi_{s}^{\beta_{\eps}, 0}\big]}  \right)^{-\frac{1}{2}}+\exp [-\frac{1}{2}\bar{h}(t- \eps^{2a_{\eps}}t,x)] \right]\right\} \\
& \leq  \sqrt{2}\left[\E\left|  \frac{ \EE_{\eps^{-1}x}\big[\Psi_{\eps^{-2}t}^{\beta_{\eps}, h_0}\big]}{\EE_{\eps^{-1}x}\big[\Psi_{s}^{\beta_{\eps}, 0}\big]}  -\exp [\bar{h}(t- \eps^{2a_{\eps}}t,x)] \right|\right]^{\frac{1}{2}}  \left[ \E \left( \frac{ \EE_{\eps^{-1}x}\big[\Psi_{\eps^{-2}t}^{\beta_{\eps}, h_0}\big]}{\EE_{\eps^{-1}x}\left[\Psi_{s}^{\beta_{\eps}, 0}\right]}  \right)^{-1} +\exp [-\bar{h}(t- \eps^{2a_{\eps}}t,x)]\right]^{\frac{1}{2}}.
\end{aligned}
\end{equation*}
Note that 
\begin{equation}
\E \left( \frac{ \EE_{\eps^{-1}x}\big[\Psi_{\eps^{-2}t}^{\beta_{\eps}, h_0}\big]}{\EE_{\eps^{-1}x}\big[\Psi_{s}^{\beta_{\eps}, 0}\big]}  \right)^{-1} \leq \left\|\EE_{\eps^{-1}x}\big[\Psi_{s}^{\beta_{\eps}, 0}\big] \right\|_2 \left\|\frac{1}{\EE_{\eps^{-1}x}\big[\Psi_{\eps^{-2}t}^{\beta_{\eps}, h_0}\big]}\right\|_2.\label{eq:l1halfnorm}
\end{equation}

By \eqref{eq:general2ndmomentbound} and Lemma ~\ref{le:negmom}, together with the assumption that $h_0$ is bounded, \eqref{eq:l1halfnorm} is bounded by some constant $C_{\beta}$. Lemma~\ref{le:cov2kpzhalf} is then the consequence of the Cauchy-Schwarz inequality and Proposition~\ref{pr:SHEdetermin}.
\end{proof}

Now to prove  Proposition \ref{pr:KPZdetermin},  \replaced{we apply the Cauchy-Schwarz
inequality to the following expression:}{by the Cauchy-Schwarz inequality, we have}
\begin{equation*}
\begin{aligned}
& \E\left[\Big| \log \frac{ \EE_{\eps^{-1}x}\big[\Psi_{\eps^{-2}t}^{\beta_{\eps}, h_0}\big]}{\EE_{\eps^{-1}x}\big[\Psi_{s}^{\beta_{\eps}, 0}\big]}  - \bar{h}(t- \eps^{2a_{\eps}}t,x)\Big|^{2}\right] \\
& = \E\left[\left| \log \frac{ \EE_{\eps^{-1}x}\big[\Psi_{\eps^{-2}t}^{\beta_{\eps}, h_0}\big]}{\EE_{\eps^{-1}x}\big[\Psi_{s}^{\beta_{\eps}, 0}\big]}  - \bar{h}(t- \eps^{2a_{\eps}}t,x)\right|^{1/4}\left| \log \frac{ \EE_{\eps^{-1}x}\big[\Psi_{\eps^{-2}t}^{\beta_{\eps}, h_0}\big]}{\EE_{\eps^{-1}x}\big[\Psi_{s}^{\beta_{\eps}, 0}\big]}  - \bar{h}(t- \eps^{2a_{\eps}}t,x)\right|^{7/4}\right],
\end{aligned}
\end{equation*}
\replaced{and bound the ${L^{7/2}(\Omega)}$ term by using}{By the discussion after} Corollary~\ref{co:logmom} and the discussion after it\replaced{. Then there}{, there} exists some constant $C_{\beta}$ such that 
the above is bounded by 
\[C_{\beta} \left\| \log \frac{ \EE_{\eps^{-1}x}\big[\Psi_{\eps^{-2}t}^{\beta_{\eps}, h_0}\big]}{\EE_{\eps^{-1}x}\big[\Psi_{\eps^{-2(1-a_\eps)}t}^{\beta_{\eps}, 0}\big]}  - \bar{h}(t- \eps^{2a_{\eps}}t,x)\right\|_{L^{1/2}(\Omega)}^{1/4}.\]
We now apply Lemma~\ref{le:cov2kpzhalf}.

\subsection{Proof of Proposition~\ref{pr:localave}}
We shall prove that  \[\frac{1}{|B(x, r_{\eps})|}\int_{|y-x| \leq r_{\eps}} \log \EE_{\eps^{-1}y}\big[\Psi_{\eps^{-2(1-a_\eps)}t}^{\beta_{\eps}, 0}\big] \dd y\] converges to the Gaussian random variable $ \mathcal{N}(0, \sigma_{\gamma}^2) - \frac{1}{2}\log\left(\frac{2\pi}{2\pi - \beta^2}\right)$ as $\eps \to 0$. To do so, we prove the convergence of all its moments.

By the shear invariance of the space-time white noise, we can set $x=0$ without loss of generality.

We first show the convergence of the mean. 

By Corollary~\ref{co:logmom}, for any $t>0$, $y \in \R^2$, $\log \EE_{\eps^{-1}y}\left[\Psi_{s}^{\beta_{\eps}, 0}\right]$
is uniformly integrable. With $a_\eps = o(1)$, by Theorem~\ref{thm:kpzpw}, its expectation converges to $ - \frac{1}{2}\log\left(\frac{2\pi}{2\pi - \beta^2}\right)$ as $\eps \to 0$. 
By a change of variable $\frac{y}{r_\eps} \to \tilde{y}$,
\[\E \left(\frac{1}{|B(0, r_{\eps})|}\int_{|y| \leq r_{\eps}} \log \EE_{\eps^{-1}y}\big[\Psi_{s}^{\beta_{\eps}, 0}\big] \dd y\right) = \E \left(\frac{1}{|B(0, 1)|}\int_{|\tilde{y}| \leq 1} \log \EE_{\eps^{-1}{\tilde{y}r_\eps}}\big[\Psi_{s}^{\beta_{\eps}, 0}\big] \dd \tilde{y}\right).
\]
By Lebesgue's dominated convergence theorem, 
\[\E \left(\frac{1}{|B(0, 1)|}\int_{|\tilde{y}| \leq 1} \log \EE_{\eps^{-1}{\tilde{y}r_\eps}}\big[\Psi_{s}^{\beta_{\eps}, 0}\big] \dd \tilde{y}\right)\to  - \frac{1}{2}\log\left(\frac{2\pi}{2\pi - \beta^2}\right), \quad \text{ as } \eps \to 0.\]

For the second moment, with  $\frac{y}{r_\eps} \to \tilde{y}$ and $\frac{y'}{r_\eps} \to \tilde{y}'$,
\begin{equation*}
\begin{aligned}
& \Var \left( \frac{1}{|B(0, r_{\eps})|}\int_{|y| \leq r_{\eps}} \log \EE_{\eps^{-1}y}\big[\Psi_{s}^{\beta_{\eps}, 0}\big] \dd y \right) \\
&=  \frac{1}{|B(0, r_{\eps})|^2}\int_{|y| \leq r_{\eps}} \int_{|y'| \leq r_{\eps}}\Cov \left( \log \EE_{\eps^{-1}y}\big[\Psi_{s}^{\beta_{\eps}, 0}\big],  \log \EE_{\eps^{-1}y'}\big[\Psi_{s}^{\beta_{\eps}, 0}\big]
 \right) \dd y \dd y'\\
 & = \frac{1}{|B(0, 1)|^2}\int_{|\tilde{y}| \leq 1} \int_{|\tilde{y}'| \leq 1} \Cov \left( \log \EE_{\eps^{-1}{\tilde{y}r_\eps}}\big[\Psi_{s}^{\beta_{\eps}, 0}\big],  \log \EE_{\eps^{-1}{\tilde{y}'r_\eps}}\big[\Psi_{s}^{\beta_{\eps}, 0}\big]
 \right) \dd \tilde{y} \dd \tilde{y}' .
\end{aligned}
\end{equation*}
Again, Corollary~\ref{co:logmom} gives the uniform integrability. By Theorem~\ref{thm:kpzpw} and the continuous mapping theorem, for any $\tilde{y}, \tilde{y}' \in \R^2$, as $\eps \to 0$,
 \[\Cov \left( \log \EE_{\eps^{-1}{\tilde{y}r_\eps}}\big[\Psi_{s}^{\beta_{\eps}, 0}\big],  \log \EE_{\eps^{-1}{\tilde{y}'r_\eps}}\big[\Psi_{s}^{\beta_{\eps}, 0}\big]
 \right) \to \Cov[Y_1, Y_2],\]
where $(Y_1, Y_2)$ are the jointly Gaussian random variables defined in Theorem~\ref{thm:kpzpw} with $\zeta_{1,2} = \gamma$. In particular, since $\Cov[Y_1, Y_2]  = \log\left( \frac{2\pi - \beta^2\gamma}{2\pi - \beta^2}\right)=: \sigma_{\gamma}^2$, we have \[\Var \left( \frac{1}{|B(0, r_{\eps})|}\int_{|y| \leq r_{\eps}} \log \EE_{\eps^{-1}y}\big[\Psi_{s}^{\beta_{\eps}, 0}\big] \dd y \right) \to  \sigma_{\gamma}^2, \quad \text{ as } \eps \to 0.\]

The convergence of any higher moments can be proved following a same procedure. In fact, for any $p \geq 1$, with $\frac{y_i}{r_\eps} \to \tilde{y_i}$ for all $1 \leq i \leq p$,
\begin{equation}
\begin{aligned}
&\E\left(\left[ \frac{1}{|B(0, r_{\eps})|}\int_{|y| \leq r_{\eps}} \log \EE_{\eps^{-1}y}\left[\Psi_{s}^{\beta_{\eps}, 0}\right] \dd y - \E \left( \frac{1}{|B(0, r_{\eps})|}\int_{|y| \leq r_{\eps}} \log \EE_{\eps^{-1}y}\left[\Psi_{s}^{\beta_{\eps}, 0}\right] \dd y \right) \right]^p \right)\\
&= \frac{1}{|B(0, r_{\eps})|^p}\int_{|y_1| \leq r_{\eps}}\dots\int_{|y_p| \leq r_{\eps}} \E \left[ \prod_{i=1}^p  \left( \log \EE_{\eps^{-1}y_i}\big[\Psi_{s}^{\beta_{\eps}, 0}\big] - \E  \log \EE_{\eps^{-1}y_i}\big[\Psi_{s}^{\beta_{\eps}, 0}\big] \right)\right] \dd y_1 \dots \dd y_p\\
&= \frac{1}{|B(0, 1)|^p}\int_{|\tilde{y}_1| \leq 1}\dots\int_{|\tilde{y}_p| \leq 1}  \E \left[ \prod_{i=1}^p  \left( \log \EE_{\eps^{-1}{\tilde{y}_ir_\eps}}\big[\Psi_{s}^{\beta_{\eps}, 0}\big] - \E  \log \EE_{\eps^{-1}{\tilde{y}_ir_\eps}}\big[\Psi_{s}^{\beta_{\eps}, 0}\big] \right)\right] \dd \tilde{y}_1 \dots \dd \tilde{y}_p.
\end{aligned}\label{eq:pthmom}
\end{equation}

Again the uniform integrability is guaranteed by Corollary~\ref{co:logmom}. Using Theorem~\ref{thm:kpzpw}, the continuous mapping theorem and the uniform integrability, for any $(\tilde{y}_{i})_{1 \leq i \leq p}$, 
\[
 \E \left[ \prod_{i=1}^p  \left( \log \EE_{\eps^{-1}{\tilde{y}_ir_\eps}}\big[\Psi_{s}^{\beta_{\eps}, 0}\big] - \E  \log \EE_{\eps^{-1}{\tilde{y}_ir_\eps}}\big[\Psi_{s}^{\beta_{\eps}, 0}\big] \right)\right] \to \E  \left[ \prod_{i=1}^p Y_i \right], \quad \text{ as } \eps \to 0,\] where $(Y_i)_{1 \leq i \leq p}$ are again the jointly Gaussian random variables defined in Theorem~\ref{thm:kpzpw} with $\zeta_{i,j} = \gamma$ for any $ i \neq j$.

Let $P_p^2$ be the set of all the pairings of $\{1, \dots, p\}$. By Wick's probability theorem, 
\[
\E  \left[ \prod_{i=1}^p Y_i \right] = \sum_{s \in P_p^2} \prod_{\{i, j\} \in s} \Cov [Y_i, Y_j].
\]
In particular, 
\begin{equation}
\E  \left[ \prod_{i=1}^p Y_i \right] = \begin{cases}
0, & \quad \text{if } p \text{ is odd,}\\
\sigma_\gamma^p(p-1)!!, & \quad \text{if } p \text{ is even.}
\end{cases}\label{eq:wick}\nonumber
\end{equation}
Here we use $n!!$ to denote the double factorial.

By Lebesgue's dominated convergence theorem, \eqref{eq:pthmom} converges to  $\E  \left[ \prod_{i=1}^p Y_i \right]$ as $\eps \to 0$. Thus for any $p \geq 1$, the $p$-th moment of  \[\frac{1}{|B(0, r_{\eps})|}\int_{|y| \leq r_{\eps}} \log \EE_{\eps^{-1}y}\big[\Psi_{s}^{\beta_{\eps}, 0}\big] \dd y\] converges to the $p$-th moment of the Gaussian distribution $ \mathcal{N}(0, \sigma_{\gamma}^2) - \frac{1}{2}\log\left(\frac{2\pi}{2\pi - \beta^2}\right)$ as $\eps \to 0$. This implies \eqref{eq:localave}.

\section{Proof of Corollary~\ref{co:localfield}}
\added{By the equality in law \eqref{eq:kpzfk},} we \added{can} prove Corollary~\ref{co:localfield} by showing that as $\eps \to 0$,
\begin{equation}\label{eq:corl2}
\E \left[\int_{\R^2} \left(\frac{1}{|B(x, r_\eps)|} \int_{|y-x|\leq{r_\eps}} \log \EE_{\eps^{-1}y}\Big[\Psi_{\eps^{-2}t}^{\beta_{\eps}, h_0}\Big] \dd y -\left[\bar{h}(t, x)- \frac{1}{2}\log\left(\frac{2\pi}{2\pi - \beta^2}\right)\right] \right)g(x)\dd x\right]^2 \to 0.
\end{equation}

In order to prove \eqref{eq:corl2}, we prove that as $\eps \to 0$,
\begin{equation}\label{eq:corproof1}
\begin{aligned}
&\left\|\int_{\R^2} \left(\frac{1}{|B(x, r_\eps)|} \int_{|y-x|\leq{r_\eps}} \log \EE_{\eps^{-1}y}\Big[\Psi_{\eps^{-2}t}^{\beta_{\eps}, h_0}\Big] \dd y-\E\left[ \frac{1}{|B(x, r_\eps)|} \int_{|y-x|\leq{r_\eps}} \log \EE_{\eps^{-1}y}\Big[\Psi_{\eps^{-2}t}^{\beta_{\eps}, h_0}\Big] \dd y\right] \right)g(x)\dd x\right\|_{L^2(\Omega)} \\
&\quad \to 0,
\end{aligned}
\end{equation}
and 
\begin{equation}\label{eq:corproof2}
\begin{aligned}
\int_{\R^2} \left(\E\left[ \frac{1}{|B(x, r_\eps)|} \int_{|y-x|\leq{r_\eps}} \log \EE_{\eps^{-1}y}\Big[\Psi_{\eps^{-2}t}^{\beta_{\eps}, h_0}\Big] \dd y\right] -\left[\bar{h}(t, x)- \frac{1}{2}\log\left(\frac{2\pi}{2\pi - \beta^2}\right)\right] \right)g(x)\dd x \to 0.
\end{aligned}
\end{equation}

Let $\tilde{\mathfrak{h}}^{\eps, \gamma}(t,x) :=  \frac{1}{|B(x, r_\eps)|} \int_{|y-x|\leq{r_\eps}} \log \EE_{\eps^{-1}y}\Big[\Psi_{\eps^{-2}t}^{\beta_{\eps}, h_0}\Big] \dd y$. By using again the decomposition \added{\eqref{eq:decomp}, we can prove}\deleted{  as in the proof of Theorem~\ref{thm:main}, we can show} that for any $x_1 \neq x_2$, $x_1, x_2 \in \R^2$, as $\eps \to 0$, \begin{equation*}
\begin{aligned}
&\Cov\left[\tilde{\mathfrak{h}}^{\eps, \gamma}(t,x_1),\tilde{\mathfrak{h}}^{\eps, \gamma}(t,x_2)\right]\\ &= \E  \left(\prod_{i=1,2}\left[ \frac{1}{|B(x_i, r_\eps)|} \int_{|y_i-x_i|\leq{r_\eps}}  \left(\log \EE_{\eps^{-1}y_i}\Big[\Psi_{\eps^{-2}t}^{\beta_{\eps}, h_0}\Big]- \E \log \EE_{\eps^{-1}y_i}\Big[\Psi_{\eps^{-2}t}^{\beta_{\eps}, h_0}\Big] \right) \dd y_i\right]\right)\\
& = \E  \left(\prod_{i=1,2}\left[ \frac{1}{|B(x_i, 1)|} \int_{|\tilde{y}_i|\leq{1}}  \left(\log \EE_{\eps^{-1}({\tilde{y}_ir_\eps + x_i})}\Big[\Psi_{\eps^{-2}t}^{\beta_{\eps}, h_0}\Big]- \E \log \EE_{\eps^{-1}({\tilde{y}_i r_\eps + x_i})}\Big[\Psi_{\eps^{-2}t}^{\beta_{\eps}, h_0}\Big] \right) \dd \tilde{y}_i\right]\right)\\
&\to 0.
\end{aligned}
\end{equation*}
\added{More precisely, we can apply \eqref{eq:decomp} to split $\tilde{\mathfrak{h}}^{\eps, \gamma}(t,x_i)$ into an ``almost deterministic'' part on $[0,t-o(1))$ and a ``random'' part on $[t-o(1),t]$ again. We can then use the Lebesgue's dominated convergence theorem to prove that the covariance of the ``random'' part converges to zero as $\eps \to 0$, where we appeal to Corollary~\ref{co:logmom} with $p=1$ for the uniform integrability.}\deleted{where Corollary~\ref{co:logmom} with $p=1$ gives the validity of using Lebesgue's dominated convergence theorem to prove the convergence.}
If $h_0=0$, the above convergence is a direct consequence of Theorem~\ref{thm:kpzpw} and Corollary~\ref{co:logmom}.

We can now prove \eqref{eq:corproof1}. In fact,
\begin{equation}\label{eq:l2avereagedfield}
\begin{aligned}
&\E\left[\int_{\R^2} \left(\frac{1}{|B(x, r_\eps)|} \int_{|y-x|\leq{r_\eps}} \log \EE_{\eps^{-1}y}\Big[\Psi_{\eps^{-2}t}^{\beta_{\eps}, h_0}\Big] - \E \log \EE_{\eps^{-1}y}\Big[\Psi_{\eps^{-2}t}^{\beta_{\eps}, h_0}\Big] \dd y \right)g(x)\dd x\right]^2\\
&= \int_{\R^2} \int_{\R^2} \Cov\left[\tilde{\mathfrak{h}}^{\eps, \gamma}(t,x_1),\tilde{\mathfrak{h}}^{\eps, \gamma}(t,x_2)\right]g(x_1)g(x_2)\dd x_1 \dd x_2.
\end{aligned}
\end{equation}
Again by using Corollary~\ref{co:logmom}, we can apply Lebesgue's dominated convergence theorem to show that \eqref{eq:l2avereagedfield} converges to zero as $\eps \to 0$.

To prove \eqref{eq:corproof2}, we note that Corollary~\ref{co:logmom} also guarantees the uniform integrability of $\frac{1}{|B(x, r_\eps)|} \int_{|y-x|\leq{r_\eps}} \log \EE_{\eps^{-1}y}\Big[\Psi_{\eps^{-2}t}^{\beta_{\eps}, h_0}\Big] \dd y$ for any fixed $t>0, x\in \R^2$. Thus for any fixed $x \in \R^2$, as $\eps \to 0$,
\[ \E \left[\frac{1}{|B(x, r_\eps)|} \int_{|y-x|\leq{r_\eps}} \log \EE_{\eps^{-1}y}\Big[\Psi_{\eps^{-2}t}^{\beta_{\eps}, h_0}\Big] \dd y \right] \to \bar{h}(t, x)- \frac{1}{2}\log\left(\frac{2\pi}{2\pi - \beta^2}\right).
\]
Now by using again Corollary~\ref{co:logmom} with $p=1$, we can apply Lebesgue's dominated convergence theorem to obtain \eqref{eq:corproof2}.

\appendix
\section{Proof of Lemma~\ref{le:condmean}}\label{ap:meanlemma}

From \eqref{eq:averagedpol}, we have
\begin{equation}\label{eq:cdmeandiff}
\begin{aligned}
&\mathbf{{E}}^{\mathscr{F}_{s}} \left\{  \frac{ \EE_{\eps^{-1}x}\big[\Psi_{\eps^{-2}t}^{\beta_{\eps}, h_0}\big]}{\EE_{\eps^{-1}x}\big[\Psi_{\eps^{-2(1-a_\eps)}t}^{\beta_{\eps}, 0}\big]} \right\} - \exp\big[\bar{h}(t- \eps^{2a_{\eps}}t,x)\big] \\
&= \mathbf{{E}}^{\mathscr{F}_{s}} \left\{ \frac{\int_{z \in \R^2} 
\rho_{s}(\eps^{-1}x,z) \EE_{0, \eps^{-1}x}^{s, z}\Big[\Psi_{s}^{\beta_{\eps}, 0} (B^B, \xi) \Big]\left\{\EE_z\Big[\Psi_{\eps^{-2}t-s}^{\beta_{\eps}, h_0} (\tilde{B}, \xi') \Big] -  \exp[{\bar{h}(t- \eps^{2a_{\eps}}t,x)}] \right\}\dd z}{\int_{z \in \R^2} \rho_s(\eps^{-1}x,z)\EE_{0, \eps^{-1}x}^{s, z}\Big[\Psi_{s}^{\beta_{\eps}, 0} (B^B, \xi) \Big]\dd z}  \right\}\\
&= \int_{z \in \R^2}   \mathbf{{E}}^{\mathscr{F}_{s}} \left\{ \frac{\rho_{s}(\eps^{-1}x,z) \EE_{0, \eps^{-1}x}^{s, z}\Big[\Psi_{s}^{\beta_{\eps}, 0} (B^B, \xi) \Big]\left\{\EE_z\Big[\Psi_{\eps^{-2}t-s}^{\beta_{\eps}, h_0} (\tilde{B}, \xi') \Big] -  \exp[{\bar{h}(t- \eps^{2a_{\eps}}t,x)}]\right\}}{\int_{z \in \R^2} \rho_s(\eps^{-1}x,z)\EE_{0, \eps^{-1}x}^{s, z}\Big[\Psi_{s}^{\beta_{\eps}, 0} (B^B, \xi) \Big]\dd z} \right\}\dd z.
\end{aligned}
\end{equation}
By \added{the }definition \added{above},\added{ \[\xi'(r, \cdot) =\xi(s+r, \cdot) \]  is a time shift of $\xi$ by $s$.}
Thus $\{\xi'(r, \cdot)\}_{r \geq 0}$ is independent of the filtration $\mathscr{F}_{s}$. Since $B^B$ and $\tilde{B}$ are independent\replaced{, and}{.} $\EE_{0, \eps^{-1}x}^{s, z} \Big[\Psi_{s}^{\beta_{\eps}, 0} (B^B, \xi) \Big]$ is measurable w.r.t. $\mathscr{F}_{s}$\added{, the }above \eqref{eq:cdmeandiff} equals to
\begin{equation}
\begin{aligned}
 & \int_{z \in \R^2}  \rho_{s}(\eps^{-1}x,z) \mathbf{{E}}^{\mathscr{F}_{s}} \left\{ \frac{\EE_{0, \eps^{-1}x}^{s, z} \Big[\Psi_{s}^{\beta_{\eps}, 0} (B^B, \xi) \Big]}{\int_{z \in \R^2} \rho_s(\eps^{-1}x,z)\EE_{0, \eps^{-1}x}^{s, z}\Big[\Psi_{s}^{\beta_{\eps}, 0} (B^B, \xi) \Big]\dd z}
  \right\} \\
& \quad \E \left\{\EE_z\Big[\Psi_{\eps^{-2}t-s}^{\beta_{\eps}, h_0} (\tilde{B}, \xi') \Big] -  \exp[\bar{h}(t- \eps^{2a_{\eps}}t,x)]\right\} \dd z\\
& =  \int_{z \in \R^2}  \rho_{s}(\eps^{-1}x,z) \frac{\EE_{0, \eps^{-1}x}^{s, z} \Big[\Psi_{s}^{\beta_{\eps}, 0} (B^B, \xi) \Big]}{\EE_{\eps^{-1}x}\big[\Psi_{s}^{\beta_{\eps}, 0}\big] }  \E \left\{\EE_z\Big[\Psi_{\eps^{-2}t-s}^{\beta_{\eps}, h_0} (\tilde{B}, \xi') \Big] -  \exp[\bar{h}(t- \eps^{2a_{\eps}}t,x)] \right\} \dd z,
\end{aligned}\nonumber
\end{equation}
\added{where we applied \eqref{eq:denominatorterm} to the denominator}.
Since $ \E \left[
\Psi_{s}^{\beta_{\eps}, 0} (\tilde{B}, \xi')\right]=1$, through scaling invariance of the Brownian motion $\tilde{B}$, the above equals to
\begin{equation*}
\begin{aligned}
&\int_{z \in \R^2}  \rho_{s}(\eps^{-1}x,z) \frac{\EE_{0, \eps^{-1}x}^{s, z} \Big[\Psi_{s}^{\beta_{\eps}, 0} (B^B, \xi) \Big]}{\EE_{\eps^{-1}x}\big[\Psi_{s}^{\beta_{\eps}, 0}\big] } \\&\quad\E\left\{ \EE_{ z}\left[\exp \big[h_0(\eps\tilde{B}_{\eps^{-2}t-s})\big]
\Psi_{\eps^{-2}t-s }^{\beta_{\eps}, 0} (\tilde{B}, \xi')\right]-  \exp[{\bar{h}(t- \eps^{2a_{\eps}}t,x)}]\right\}\dd z\\
 =& \int_{z \in \R^2}  \rho_{s}(\eps^{-1}x,z) \frac{\EE_{0, \eps^{-1}x}^{s, z} \Big[\Psi_{s}^{\beta_{\eps}, 0} (B^B, \xi) \Big]}{\EE_{\eps^{-1}x}\big[\Psi_{s}^{\beta_{\eps}, 0}\big] } \left(\EE_{\eps z}\left[\exp[{h_0(\tilde{B}_{t-\eps^{2a_\eps}t})}]
\right]-  \exp[{\bar{h}(t - \eps^{2a_{\eps}}t, x)}]\right)\dd z\\
 = &\int_{z' \in \R^2} \rho_{s}(\eps^{-1}x,\eps^{-1}z') \frac{\EE_{0, \eps^{-1}x}^{s, \eps^{-1}z'} \Big[\Psi_{s}^{\beta_{\eps}, 0} (B^B, \xi) \Big]}{\EE_{\eps^{-1}x}\big[\Psi_{s}^{\beta_{\eps}, 0}\big] } \left(\EE_{z'}\left[\exp[{h_0(\tilde{B}_{t-\eps^{2a_\eps}t})}]
\right]-  \exp[{\bar{h}(t - \eps^{2a_{\eps}}t, x)}]\right) \eps^{-2}\dd z'.
\end{aligned}
\end{equation*}
We applied a change of variable $\eps z \to z'$ in the last equation.

Using \replaced{Minkowski’s integral inequality}{Minkowski inequality} and the above computation, we have
\begin{equation}
\begin{aligned}
&\left \|\mathbf{{E}}^{\mathscr{F}_{s}} \left\{  \frac{ \EE_{\eps^{-1}x}\big[\Psi_{\eps^{-2}t}^{\beta_{\eps}, h_0}\big]}{\EE_{\eps^{-1}x}\big[\Psi_{s}^{\beta_{\eps}, 0}\big]} \right\} - \exp\big[\bar{h}(t- \eps^{2a_{\eps}}t,x)\big]\right\|_{L^2(\Omega)}\\ & \leq
 \int_{z' \in \R^2}  \rho_{s}(\eps^{-1}x,\eps^{-1}z') 
 \left|\EE_{z'}\left[\exp[{h_0(\tilde{B}_{t-\eps^{2a_\eps}t})}]
\right]-  \exp[{\bar{h}(t - \eps^{2a_{\eps}}t, x)}]\right|\\&\quad
 \left\|  \frac{\EE_{0, \eps^{-1}x}^{s, \eps^{-1}z'} \Big[\Psi_{s}^{\beta_{\eps}, 0} (B^B, \xi) \Big]}{\EE_{\eps^{-1}x}\big[\Psi_{s}^{\beta_{\eps}, 0}\big] }\right\|_{L^2(\Omega)} \eps^{-2}\dd z'.
\end{aligned}\label{eq:mink}
\end{equation}

By  H\"older's inequality, for any $p, q > 2$ and $1/p + 1/q = 1/2$,
 \[ \left\|  \frac{\EE_{0, \eps^{-1}x}^{s, \eps^{-1}z'} \Big[\Psi_{s}^{\beta_{\eps}, 0} (B^B, \xi) \Big]}{\EE_{\eps^{-1}x}\big[\Psi_{s}^{\beta_{\eps}, 0}\big] }\right\|_{L^2(\Omega)} \leq \left\| \EE_{0, \eps^{-1}x}^{s, \eps^{-1}z'} \Big[\Psi_{s}^{\beta_{\eps}, 0} (B^B, \xi) \Big]\right\|_{L^p(\Omega)}  \left\|\frac{1}{\EE_{\eps^{-1}x}\big[\Psi_{s}^{\beta_{\eps}, 0}\big]}  \right\|_{L^q(\Omega)}.
 \]
 By Lemma~\ref{le:higher} and Lemma~\ref{le:negmom}, if $ p < p_{\beta}$, there exists some $\tilde{C}_{\beta}>0$ such that for any $z' \in \R^2$ and $0<\eps\leq1$, 
\[ \left\|  \frac{\EE_{0, \eps^{-1}x}^{s, \eps^{-1}z'} \Big[\Psi_{s}^{\beta_{\eps}, 0} (B^B, \xi) \Big]}{\EE_{\eps^{-1}x}\big[\Psi_{s}^{\beta_{\eps}, 0}\big] }\right\|_{L^2(\Omega)} \leq\tilde {C}_{\beta}.\]
Therefore, \eqref{eq:mink} is bounded by 
\begin{equation*}
\begin{aligned}
&\tilde{C}_{\beta}
\int_{z' \in \R^2}  \rho_{s}(\eps^{-1}x,\eps^{-1}z') 
 \left|\EE_{z'}\left[\exp[{h_0(\tilde{B}_{t-\eps^{2a_\eps}t})}]
\right]-  \exp[\bar{h}(t - \eps^{2a_{\eps}}t, x)]\right|
\eps^{-2}\dd z' \\
& =\tilde{C}_{\beta}
\int_{z' \in \R^2}  \rho_{\eps^2 s}(x, z') 
 \left|\EE_{z'}\left[\exp[{h_0(\tilde{B}_{t-\eps^{2a_\eps}t})}]
\right]-  \exp[{\bar{h}(t - \eps^{2a_{\eps}}t, x)}]\right|
\dd z'.
\end{aligned}
\end{equation*}
By Feynman-Kac formula, $\exp[\bar{h}(t - \eps^{2a_{\eps}}t, x)] = \EE_x\left[\exp[{h_0({B}_{t-\eps^{2a_\eps}t})}]\right]$ for some Brownian motion $B$ starting at $x$. Let $B^0$ be another Brownian motion starting at $0$. For any $a, b \in \R$, $|e^{a} - e^b| \leq (e^a + e^b)|a-b|$. Now since $h_0$ is bounded and Lipschitz continuous, there exists some $C>0$, such that
\begin{equation*}
\begin{aligned}
 &\left|\EE_{z'}\left[\exp\left[{h_0(\tilde{B}_{t-\eps^{2a_\eps}t})}\right]
\right]-  \exp\left[{h(t - \eps^{2a_{\eps}}t, x)}\right]\right|  \\&= \left|\EE_{z'}\left[\exp[{h_0(\tilde{B}_{t-\eps^{2a_\eps}t})}]
\right]-  \EE_x\left[\exp[{h_0({B}_{t-\eps^{2a_\eps}t})}]\right]\right|\\
& \leq  \EE_0 \left|\exp[{h_0(z' + {{B}}^0_{t-\eps^{2a_\eps}t})}] - \exp[{h_0(x +{{B}}^0_{t-\eps^{2a_\eps}t})}]\right| \\ &\leq C |z'-x|.\end{aligned}
\end{equation*}
Then since
\[
\int_{z' \in \R^2}  \rho_{\eps^2 s}(x, z') |z'-x| \dd z'  = 2\pi \int_{0}^{+\infty} \frac{r^2}{2\pi \eps^2 s} e^{- \frac{r^2}{2 \eps^2 s}} \dd r = \sqrt{\frac{\pi}{2}\eps^2s},\]
and $s = \eps^{-2(1-a_\eps)}t$, we have proved \eqref{eq:condmean}.

\section{Proof of Lemma~\ref{le:condvar}}\label{ap:varlemma}

Again, we shall use the facts that $\{\xi'(r, \cdot)\}_{r \geq 0}$ is independent of the filtration $\mathscr{F}_{s}$, $B^B$ and $\tilde{B}$ are independent, and $\EE_{0, \eps^{-1}x}^{s, z} \Big[\Psi_{s}^{\beta_{\eps}, 0}  \Big] :=\EE_{0, \eps^{-1}x}^{s, z} \Big[\Psi_{s}^{\beta_{\eps}, 0} (B^B, \xi) \Big]$ is measurable w.r.t. $\mathscr{F}_{s}$. 
With the same approach as in the proof of Lemma~\ref{le:condmean}, we have
\begin{equation}
\begin{aligned}
&\mathbf{{V}ar}^{\mathscr{F}_{s}} \left\{  \frac{ \EE_{\eps^{-1}x}\big[\Psi_{\eps^{-2}t}^{\beta_{\eps}, h_0}\big]}{\EE_{\eps^{-1}x}\big[\Psi_{s}^{\beta_{\eps}, 0}\big]} \right\}  =
\mathbf{{V}ar}^{\mathscr{F}_{s}} \left\{  \frac{\int_{z \in \R^2} 
\rho_{s}(\eps^{-1}x,z) \EE_{0, \eps^{-1}x}^{s, z}\Big[\Psi_{s}^{\beta_{\eps}, 0} \Big]\EE_z\Big[\Psi_{\eps^{-2}t-s}^{\beta_{\eps}, h_0} (\tilde{B}, \xi') \Big] \dd z}{\int_{z \in \R^2} \rho_s(\eps^{-1}x,z)\EE_{0, \eps^{-1}x}^{s, z}\Big[\Psi_{s}^{\beta_{\eps}, 0} \Big]\dd z}  \right\}\\
& = \frac{\int_{z \in \R^2} \int_{z' \in \R^2} 
\rho_{s}(\eps^{-1}x,z)
\rho_{s}(\eps^{-1}x,z') \EE_{0, \eps^{-1}x}^{s, z}\Big[\Psi_{s}^{\beta_{\eps}, 0}\Big]\EE_{0, \eps^{-1}x}^{s, z'}\Big[\Psi_{s}^{\beta_{\eps}, 0}\Big] \mathcal{C}_{z,z'}^{\eps} \dd z \dd z'}{\int_{z \in \R^2} \int_{z' \in \R^2} 
\rho_{s}(\eps^{-1}x,z)
\rho_{s}(\eps^{-1}x,z') \EE_{0, \eps^{-1}x}^{s, z}\Big[\Psi_{s}^{\beta_{\eps}, 0}\Big]\EE_{0, \eps^{-1}x}^{s, z'}\Big[\Psi_{s}^{\beta_{\eps}, 0}\Big]\dd z \dd z' },
\end{aligned}\label{eq:averagedvar}
\end{equation}
where \begin{equation*}
\mathcal{C}_{z,z'}^{\eps} := \mathbf{{C}ov}^{\mathscr{F}_{s}} \left( \EE_z\Big[\Psi_{\eps^{-2}t-s}^{\beta_{\eps}, h_0} (\tilde{B}, \xi') \Big], \EE_{z'}\Big[\Psi_{\eps^{-2}t-s}^{\beta_{\eps}, h_0} (\tilde{B}, \xi') \Big] \right).
\end{equation*}
Since $\{\xi'(r, \cdot)\}_{r \geq 0}$ is independent of the filtration $\mathscr{F}_{s}$, 
\[
\mathbf{{C}ov}^{\mathscr{F}_{s}} \left( \EE_z\Big[\Psi_{\eps^{-2}t-s}^{\beta_{\eps}, h_0} (\tilde{B}, \xi') \Big], \EE_{z'}\Big[\Psi_{\eps^{-2}t-s}^{\beta_{\eps}, h_0} (\tilde{B}, \xi') \Big] \right)= \Cov \left( \EE_z\Big[\Psi_{\eps^{-2}t-s}^{\beta_{\eps}, h_0} (\tilde{B}, \xi') \Big], \EE_{z'}\Big[\Psi_{\eps^{-2}t-s}^{\beta_{\eps}, h_0} (\tilde{B}, \xi') \Big] \right).
\]
For fixed $z, z' \in \R^2$ and $\eps>0$, $\mathcal{C}_{z,z'}^{\eps}$ is a deterministic value, and
\begin{equation*}
\begin{aligned}\mathcal{C}_{z,z'}^{\eps} &=\Cov \left( \EE_z\Big[\Psi_{\eps^{-2}t-s}^{\beta_{\eps}, h_0} (\tilde{B}, \xi') \Big], \EE_{z'}\Big[\Psi_{\eps^{-2}t-s}^{\beta_{\eps}, h_0} (\tilde{B}, \xi') \Big] \right) \\
&= \Cov  \Big(\EE_{ z}\left[\exp \big[h_0(\eps\tilde{B}^1_{\eps^{-2}t-s})\big]
\Psi_{\eps^{-2}t-s}^{\beta_{\eps}, 0} (\tilde{B}^1, \xi')\right],  \EE_{z'}\left[\exp \big[h_0(\eps\tilde{B}^2_{\eps^{-2}t-s})\big]
\Psi_{s}^{\beta_{\eps}, 0} (\tilde{B}^2, \xi')\right]\Big)\\
& = \E \Big\{ \EE_{ z}\left[\exp \big[h_0(\eps\tilde{B}^1_{\eps^{-2}t-s})\big]\left(
\Psi_{\eps^{-2}t-s}^{\beta_{\eps}, 0} (\tilde{B}^1, \xi') - 1 \right)\right]\\&\quad \EE_{ z'}\left[\exp \big[h_0(\eps\tilde{B}^2_{\eps^{-2}t-s})\big]\left(
\Psi_{\eps^{-2}t-s}^{\beta_{\eps}, 0} (\tilde{B}^2, \xi') - 1 \right)\right] \Big\}\\
& = \EE_z \otimes \EE_{z'} \Big\{ \exp \big[h_0(\eps\tilde{B}^1_{\eps^{-2}t-s}) +h_0(\eps\tilde{B}^2_{\eps^{-2}t-s}) \big]   \E \left(
\Psi_{\eps^{-2}t-s}^{\beta_{\eps}, 0} (\tilde{B}^1, \xi') - 1 \right) \left(
\Psi_{\eps^{-2}t-s}^{\beta_{\eps}, 0} (\tilde{B}^2, \xi') - 1 \right) \Big\}\\
&= \EE_z \otimes \EE_{z'} \Big\{ \exp \big[h_0(\eps\tilde{B}^1_{\eps^{-2}t-s}) +h_0(\eps\tilde{B}^2_{\eps^{-2}t-s}) \big]   \big(\exp \big[{\beta^2_{\eps}}\int_0^{\eps^{-2}t-s}V(\tilde{B}^1_r - \tilde{B}^2_r ) \dd r \big]-1\big)\Big\},
\end{aligned}
\end{equation*}
\added{where $V(x)= \phi * \phi(x)$ as defined in \eqref{eq:defofV}.}
Note that we \replaced{can}{could} bound \[\EE_z \otimes \EE_{z'}   \left(\exp \big[{\beta^2_{\eps}}\int_0^{\eps^{-2}t-s}V(\tilde{B}^1_r - \tilde{B}^2_r ) \dd r \big]-1\right)\] uniformly by some constant as in \eqref{eq:ucov}. Since $h_0$ is also bounded, there exists some $0 < C^{(1)}_{\beta}< \infty$, such that
\begin{equation}
\sup_{\eps \leq 1}\sup_{z, z' \in \R^2}\mathcal{C}_{z,z'}^{\eps} \leq C^{(1)}_{\beta}. \label{eq:unicovbd}
\end{equation}

From \eqref{eq:averagedvar},  $\mathbf{{V}ar}^{\mathscr{F}_{s}} \left\{  \frac{ \EE_{\eps^{-1}x}\big[\Psi_{\eps^{-2}t}^{\beta_{\eps}, h_0}\big]}{\EE_{\eps^{-1}x}\big[\Psi_{s}^{\beta_{\eps}, 0}\big]} \right\}$
is a weighted average of $C_{z,z'}^{\eps}$ over $z, z' \in \R^2$. Therefore, by \eqref{eq:unicovbd}, for $\eps \leq 1$, 
\begin{equation}
\mathbf{{V}ar}^{\mathscr{F}_{s}} \left\{  \frac{ \EE_{\eps^{-1}x}\big[\Psi_{\eps^{-2}t}^{\beta_{\eps}, h_0}\big]}{\EE_{\eps^{-1}x}\big[\Psi_{s}^{\beta_{\eps}, 0}\big]} \right\} \leq C^{(1)}_{\beta}, \quad \text{ almost surely.} \label{eq:bdconvar}
\end{equation}

Now we first assume that, for a particular realization of the noise field, 
$\EE_{\eps^{-1}x}\big[\Psi_{s}^{\beta_{\eps}, 0}\big] \leq b_{\eps}$
for some $b_{\eps} > 0$. We will choose an appropriate $b_{\eps}$ later so that $b_{\eps} = o(1)$.

By the Markov's inequality and  Lemma~\ref{le:negmom}, we have
\begin{equation}
\begin{aligned}
\Prob \left[\EE_{\eps^{-1}x}\big[\Psi_{s}^{\beta_{\eps}, 0}\big] \leq b_{\eps} \right] = \Prob \left[\left(\EE_{\eps^{-1}x}\big[\Psi_{s}^{\beta_{\eps}, 0}\big]\right)^{-1} \geq b_{\eps}^{-1}\right] \leq \E \left[\left(\EE_{\eps^{-1}x}\big[\Psi_{s}^{\beta_{\eps}, 0}\big]\right)^{-1} \right] b_{\eps} \leq C^{(2)}_{\beta}  b_{\eps},
\end{aligned}
\label{eq:lefttailpro}
\end{equation}
for some $ C^{(2)}_{\beta} >0$. By \eqref{eq:bdconvar} and \eqref{eq:lefttailpro} together, we have
\begin{equation}
\E \left[\mathbf{{V}ar}^{\mathscr{F}_{s}} \left\{  \frac{ \EE_{\eps^{-1}x}\big[\Psi_{\eps^{-2}t}^{\beta_{\eps}, h_0}\big]}{\EE_{\eps^{-1}x}\big[\Psi_{s}^{\beta_{\eps}, 0}\big]} \right\}; \EE_{\eps^{-1}x}\big[\Psi_{s}^{\beta_{\eps}, 0}\big] \leq b_{\eps} \right] \leq C^{(1)}_{\beta} C^{(2)}_{\beta}   b_{\eps}.
\label{eq:condmean1}
\end{equation}

We then assume that for a particular realization of the noise field, 
\[\EE_{\eps^{-1}x}\big[\Psi_{s}^{\beta_{\eps}, 0}\big] > b_{\eps}.\]
In this case, by \eqref{eq:averagedvar},
\begin{equation}
\begin{aligned}
&\mathbf{{V}ar}^{\mathscr{F}_{s}} \left\{  \frac{ \EE_{\eps^{-1}x}\big[\Psi_{\eps^{-2}t}^{\beta_{\eps}, h_0}\big]}{\EE_{\eps^{-1}x}\big[\Psi_{s}^{\beta_{\eps}, 0}\big]} \right\}\\
&\leq  b_{\eps}^{-2} \int_{z \in \R^2} \int_{z' \in \R^2} 
\rho_{s}(\eps^{-1}x,z)
\rho_{s}(\eps^{-1}x,z') \EE_{0, \eps^{-1}x}^{s, z}\Big[\Psi_{s}^{\beta_{\eps}, 0}\Big]\EE_{0, \eps^{-1}x}^{s, z'}\Big[\Psi_{s}^{\beta_{\eps}, 0}\Big] \mathcal{C}_{z,z'}^{\eps} \dd z \dd z'.
\label{eq:varcon2leq}
\end{aligned}
\end{equation}
We bound the expectation of the right-hand-side of \eqref{eq:varcon2leq}.
By H\"older's inequality and Lemma~\ref{le:2ndmombb}, there exists some $0 < C^{(3)}_\beta < \infty$, such that
\begin{equation}
\begin{aligned}
& b_{\eps}^{-2} \E  \int_{z \in \R^2} \int_{z' \in \R^2} 
\rho_{s}(\eps^{-1}x,z)
\rho_{s}(\eps^{-1}x,z') \EE_{0, \eps^{-1}x}^{s, z}\Big[\Psi_{s}^{\beta_{\eps}, 0}\Big]\EE_{0, \eps^{-1}x}^{s, z'}\Big[\Psi_{s}^{\beta_{\eps}, 0}\Big] \mathcal{C}_{z,z'}^{\eps} \dd z \dd z' \\
 &= b_{\eps}^{-2} \int_{z \in \R^2} \int_{z' \in \R^2} 
\rho_{s}(\eps^{-1}x,z)
\rho_{s}(\eps^{-1}x,z') \E \left\{\EE_{0, \eps^{-1}x}^{s, z}\Big[\Psi_{s}^{\beta_{\eps}, 0}\Big]\EE_{0, \eps^{-1}x}^{s, z'}\Big[\Psi_{s}^{\beta_{\eps}, 0}\Big] \right\}  \mathcal{C}_{z,z'}^{\eps} \dd z \dd z' \\
&\leq b_{\eps}^{-2}  C_\beta^{(3)} \int_{z \in \R^2} \int_{z' \in \R^2} 
\rho_{s}(\eps^{-1}x,z)
\rho_{s}(\eps^{-1}x,z') \mathcal{C}_{z,z'}^{\eps} \dd z \dd z' \\
&= b_{\eps}^{-2} C_\beta^{(3)} \int_{y \in \R^2} \int_{y' \in \R^2} 
\rho_{\eps^2s}(x, y)
\rho_{\eps^2s}(x, y') \mathcal{C}_{\eps^{-1}y,\eps^{-1}y'}^{\eps} \dd y \dd y',
\end{aligned}\label{eq:expcondvar2}
\end{equation}
with a change of variable $\eps z \to y$ and $\eps z' \to y'$ in the last equation.

Since we have the term $b_{\eps}^{-2} $ in \eqref{eq:expcondvar2}, the uniform bound of $ \mathcal{C}_{\eps^{-1}y,\eps^{-1}y'}^{\eps}$ solely will not give us a desired bound. We shall proceed as the following.

By the boundedness of $h_0$, there exists some constant $C_{h_0} >0$ such that 
\begin{equation}\label{eq:condvar2}
\begin{aligned}
&\quad \int_{y \in \R^2} \int_{y' \in \R^2} 
\rho_{\eps^2s}(x, y)
\rho_{\eps^2s}(x, y') \mathcal{C}_{\eps^{-1}y,\eps^{-1}y'}^{\eps} \dd y \dd y'\\
 &= \int_{y \in \R^2} \int_{y' \in \R^2} 
\rho_{\eps^2s}(x, y)
\rho_{\eps^2s}(x, y')  \\ 
& \EE_{\eps^{-1}y} \otimes \EE_{\eps^{-1}y'} \Big\{ \exp \big[h_0(\eps\tilde{B}^1_{\eps^{-2}t-s}) +h_0(\eps\tilde{B}^2_{\eps^{-2}t-s}) \big] \big(\exp \big[{\beta^2_{\eps}}\int_0^{\eps^{-2}t-s}V(\tilde{B}^1_r - \tilde{B}^2_r ) \dd r \big]-1\big)\Big\} \dd y \dd y'\\
&\leq C_{h_0}  \int_{y \in \R^2} \int_{y' \in \R^2} 
\rho_{\eps^2s}(x, y)
\rho_{\eps^2s}(x, y')
 \EE_{\eps^{-1}y} \otimes \EE_{\eps^{-1}y'}  \big(\exp \big[{\beta^2_{\eps}}\int_0^{\eps^{-2}t-s}V(\tilde{B}^1_r - \tilde{B}^2_r ) \dd r \big]-1\big) \dd y \dd y '\\
&= C_{h_0}  \int_{y \in \R^2} \int_{y' \in \R^2} 
\rho_{\eps^2s}(x, y)
\rho_{\eps^2s}(x, y')  \EE_{\eps^{-1}\frac{y-y'}{\sqrt{2}}}  \big(\exp \big[{\beta^2_{\eps}}\int_0^{\eps^{-2}t-s}V(\sqrt{2} B_r) \dd r \big]-1\big) \dd y \dd y '\\
&= C_{h_0}  \int_{\R^2} \int_{\R^2} 
\rho_{\eps^2s}(y)
\rho_{\eps^2s}(y')
 \EE_{\eps^{-1}\frac{y-y'}{\sqrt{2}}}  \big(\exp \big[{\beta^2_{\eps}}\int_0^{\eps^{-2}t-s}V(\sqrt{2} B_r) \dd r \big]-1\big) \dd y \dd y ' \\
&= C_{h_0}  \int_{\R^2}
\rho_{2\eps^2s}(y)
 \EE_{\eps^{-1}\frac{y}{\sqrt{2}}}  \big(\exp \big[{\beta^2_{\eps}}\int_0^{\eps^{-2}t-s}V(\sqrt{2} B_r) \dd r \big]-1\big) \dd y.
\end{aligned}
\end{equation}
By Taylor expansion, if we set $x_0 = \eps^{-1}\frac{y}{\sqrt{2}}$ and $s_0 = 0$, the last line equals to
\begin{equation}\label{eq:taylorexpansion}
\begin{aligned}
&C_{h_0}  \int_{\R^2}
\rho_{2\eps^2s}(y)\\
&\cdot\Big[\sum_{n=1}^{\infty}\beta_{\eps}^{2n} \int_{0 < s_1 < \dots < s_n< \eps^{-2}t-s}\int_{\R^{2n}} \prod_{i=1}^n V(\sqrt{2} x_i)\rho_{s_i - s_{i-1}}(x_{i-1}, x_i) \dd s_1 \dots \dd s_n \dd x_1 \dots \dd x_n\Big] \dd y.
\end{aligned}
\end{equation}

With $\int_{\R^2}{\eps^{-2}}V(\eps^{-1}x)\dd x =1 $,  we bound the $n=1$ term by the following:
\begin{equation*}
\begin{aligned}
&\int_{\R^2}
\rho_{2\eps^2s}(y)
\Big[\beta_{\eps}^2 \int_0^{\eps^{-2}t-s}\int_{\R^{2}} V(\sqrt{2} x_1)\rho_{s_1}(x_1,  \eps^{-1}\frac{y}{\sqrt{2}}) \dd s_1 \dd  x_1\Big] \dd y \\
&= \beta_{\eps}^2  \int_0^{\eps^{-2}t-s} \int_{\R^{2}}\int_{\R^{2}}\rho_{2\eps^2s}(y)2 \eps^2 \rho_{2 \eps^2 s_1}(\sqrt{2}\eps x_1,  {y}) V(\sqrt{2} x_1)\dd  x_1\dd y \dd s_1 \\
& = \beta_{\eps}^2  \int_0^{\eps^{-2}t-s} \int_{\R^{2}}2 \eps^2 \rho_{2 \eps^2 (s+s_1)}(\sqrt{2}\eps x_1) V(\sqrt{2} x_1)\dd  x_1 \dd s_1 \\
& = \beta_{\eps}^2  \int_0^{\eps^{-2}t-s} \int_{\R^{2}} \rho_{2 \eps^2 (s+s_1)}(x) V(\eps^{-1}x)\dd  x \dd s_1 = \beta_{\eps}^2  \int_{\eps^2s}^{t} \int_{\R^{2}} \rho_{2r}(x) \frac{1}{\eps^2} V(\eps^{-1}x)\dd  x \dd r\\
& \leq \beta_{\eps}^2 \int_{\eps^2s}^{t} \frac{1}{4 \pi r} \dd r 
= \frac{ \beta_{\eps}^2}{4 \pi} \log \frac{t}{\eps^2 s} = \frac{ \beta^2   \log \eps^{-2 a_{\eps}}}{4 \pi \log \eps^{-1}} =  \frac{ \beta^2  }{2\pi} a_{\eps}.
\end{aligned}
\end{equation*}

For higher order terms, we have
\begin{equation*}
\begin{aligned}
& \int_{\R^2}
\rho_{2\eps^2s}(y)
\Big[\beta_{\eps}^{2n} 
\int_{0 < s_1 < \dots < s_n< \eps^{-2}t-s}\int_{\R^{2n}}\prod_{i=1}^n V(\sqrt{2} x_i) \rho_{s_i - s_{i-1}}(x_{i-1}, x_i) \dd s_1 \dots \dd s_n \dd x_1 \dots \dd x_n\Big] \dd y\\
& =  \int_{\R^2}
\rho_{2\eps^2s}(y)
\Big[\beta_{\eps}^{2n} 
\int_{0 < s_1 < \dots < s_n< \eps^{-2}t-s}\int_{\R^{2n}}\prod_{i=1}^n V(\sqrt{2} x_i) \\
& \quad (2\eps^2)^n \rho_{2\eps^2(s_i - s_{i-1})}(\sqrt{2}\eps x_{i-1}, \sqrt{2}\eps x_i) \dd s_1 \dots \dd s_n \dd x_1 \dots \dd x_n\Big] \dd y\\
& = \int_{\R^2}
\rho_{2\eps^2s}(y)
\Big[\beta_{\eps}^{2n} 
\int_{0 < s_1 < \dots < s_n< \eps^{-2}t-s}\int_{\R^{2n}}\prod_{i=1}^n V(\frac{\tilde{x}_i}{\eps}) \rho_{2\eps^2(s_i - s_{i-1})}(\tilde{x}_{i-1},\tilde{x}_i) \dd s_1 \dots \dd s_n \dd \tilde{x}_1 \dots \dd \tilde{x}_n\Big] \dd y\\
& = \int_{\R^2}
\rho_{2\eps^2s}(y)
\Big[\beta_{\eps}^{2n} 
\int_{0 < \tilde{s}_1 < \dots < \tilde{s}_n< t-\eps^2s}\int_{\R^{2n}}\prod_{i=1}^n \frac{1}{\eps^2}V(\frac{\tilde{x}_i}{\eps}) \rho_{2(\tilde{s}_i - \tilde{s}_{i-1})}(\tilde{x}_{i-1}, \tilde{x}_i) \dd \tilde{s}_1 \dots \dd \tilde{s}_n \dd  \tilde{x}_1 \dots \dd \tilde{x}_n\Big] \dd y,
\end{aligned}
\end{equation*}
with $\tilde{s}_0 = 0$ and $\tilde{x}_0 = y$ by a change of variable.

Using again $\int_{\R^2}\frac{1}{\eps^2}V(\eps^{-1}x)\dd x =1 $, for any $\eps, u_1, u_2 >0$ and $z_1, z_2 \in \R^2$, we have
\begin{equation}
\begin{aligned}
&\int_{\R^2}\rho_{u_1}(z_1) \rho_{u_2}(z_2, z_1)\frac{1}{\eps^2}V(\eps^{-1}z_1)\dd z_1 \\
&=\int_{\R^2} \frac{1}{4\pi^2u_1u_2} \exp\left(-\frac{|z_1|^2}{2u_1}-\frac{|z_1-z_2|^2}{2u_2}\right) \frac{1}{\eps^2}V(\eps^{-1}z_1)\dd z_1\\ &= \rho_{u_1+u_2}(z_2)\int_{\R^2} \frac{2\pi (u_1 +u_2)}{4\pi^2u_1u_2} \exp\left(\frac{|z_2|^2}{2(u_1+u_2)}-\frac{|z_1|^2}{2u_1}-\frac{|z_1-z_2|^2}{2u_2}\right) \frac{1}{\eps^2}V(\eps^{-1}z_1)\dd z_1\\
& = \rho_{u_1+u_2}(z_2)\int_{\R^2} \frac{2\pi (u_1 +u_2)}{4\pi^2u_1u_2} \exp\left(- \frac{u_1+u_2}{2u_1u_2}(z_1 - \frac{u_1}{u_1+u_2}z_2)^2\right)  \frac{1}{\eps^2}V(\eps^{-1}z_1)\dd z_1\\
& \leq  \rho_{u_1+u_2}(z_2)\int_{\R^2} \frac{(u_1 +u_2)}{2\pi u_1u_2}  \frac{1}{\eps^2}V(\eps^{-1}z_1)\dd z_1 =  \frac{u_1 +u_2}{2\pi u_1u_2}  \rho_{u_1+u_2}(z_2).
\end{aligned}\label{eq:ineq1}
\end{equation}
Since $V$ is bounded by $\|V\|_{\infty}$, \added{we also have that}
\begin{equation}
\int_{\R^2}\rho_{u_1}(z_1) \rho_{u_2}(z_2, z_1)\frac{1}{\eps^2}V(\eps^{-1}z_1)\dd z_1 \leq \frac{\|V\|_{\infty}}{\eps^2} \rho_{u_1+u_2}(z_2).\label{eq:ineq2}
\end{equation}
Now if we integrate $y$ first, and then $x_1, \dots, x_n$ successively utilizing the bounds \eqref{eq:ineq1} and \eqref{eq:ineq2}, we will have  that
\begin{equation}
\begin{aligned}
&\int_{\R^2}
\rho_{2\eps^2s}(y)
\Big[\beta_{\eps}^{2n} 
\int_{0 < {s}_1 < \dots < {s}_n< t-\eps^2s}\int_{\R^{2n}}\prod_{i=1}^n \frac{1}{\eps^2}V(\eps^{-1}x_i) \rho_{2({s}_i - {s}_{i-1})}({x}_{i-1}, {x}_i) \dd {s}_1 \dots \dd {s}_n \dd  {x}_1 \dots \dd {x}_n\Big] \dd y \\ 
& \leq \left(\frac{\beta_{\eps}^2}{2\pi}\right)^{n}  \int_{0 < {s}_1 < \dots < {s}_n< t-\eps^2s}  \frac{1}{2s_n + 2\eps^2 s}
\prod_{i=1}^{n-1}\left[\left(\frac{1}{2(s_{i+1}-s_i)} + \frac{1}{2s_i + 2\eps^2 s}\right)\wedge\frac{2\pi\|V\|_{\infty}}{\eps^2}\right]  \dd {s}_1 \dots \dd {s}_n.
\end{aligned}\label{eq:prodl2}
\end{equation}

To bound the last integral, we integrate $s_1, \dots, s_n$ successively. For any $1 \leq i \leq n-1$, we have
\begin{equation*}
\begin{aligned}
& \int_{0}^{s_{i+1}} \left(\frac{1}{2s_{i+1}-2s_i} + \frac{1}{2s_i + 2\eps^2 s}\right)\wedge\frac{2\pi\|V\|_{\infty}}{\eps^2} \dd s_i \\ &\leq \int_{0}^{s_{i+1}} \frac{1}{2s_{i+1}-2s_i}  \wedge\frac{2\pi\|V\|_{\infty}}{\eps^2} 
\dd s_i + \frac{1}{2}\int_0^{t- \eps^2s}  \frac{1}{s_i + \eps^2 s} \dd s_i\\
& \leq  \int_{0}^{s_{i+1}} \frac{1}{2s_i}  \wedge\frac{2\pi\|V\|_{\infty}}{\eps^2} 
\dd s_i + \frac{1}{2}\int_0^{t- \eps^2s}  \frac{1}{s_i + \eps^2 s} \dd s_i  \\&\leq  \int_{0}^{t} \frac{1}{2s_i}  \wedge\frac{2\pi\|V\|_{\infty}}{\eps^2} 
\dd s_i + \frac{1}{2}\int_0^{t- \eps^2s}  \frac{1}{s_i + \eps^2 s} \dd s_i\\
& =\frac{1}{2} \log \frac{4\pi\|V\|_{\infty}t}{\eps^2} + \frac{1}{2} + \frac{1}{2}\log \frac{t}{\eps^2 s}\\& = \frac{1}{2} \log (4\pi\|V\|_{\infty}t) + \frac{1}{2} + \log \eps^{-1} + a_{\eps} \log \eps^{-1}. 
\end{aligned}
\end{equation*}
Thus \eqref{eq:prodl2} is bounded by
\begin{equation}\label{eq:lastboundchaos}
\begin{aligned}
&\quad \left(\frac{\beta_{\eps}^2}{2\pi}\right)^{n}\int_0^{t-\eps^2s} \frac{1}{2s_n + 2\eps^2 s}\left( \frac{1}{2} \log (4\pi\|V\|_{\infty}t) + \frac{1}{2} + \log \eps^{-1} + a_{\eps} \log \eps^{-1}\right)^{n-1} \dd s_n \\
& = \left(\frac{\beta_{\eps}^2}{2\pi}\right)^{n}\left( \frac{1}{2} \log (4\pi\|V\|_{\infty}t) + \frac{1}{2} + \log \eps^{-1} + a_{\eps} \log \eps^{-1}\right)^{n-1} a_{\eps} \log \eps^{-1}\\
& =  \left(\frac{\beta^2}{2\pi}\right)^{n}\left(1+a_{\eps}+\frac{C_0}{\log \eps^{-1}}\right)^{n-1}a_\eps,
\end{aligned}
\end{equation}
with some constant $C_0 \in \R$ depends only on $V$ and $t$.

We can now bound \eqref{eq:expcondvar2} by  {\eqref{eq:condvar2}--\eqref{eq:taylorexpansion} and \eqref{eq:prodl2}--\eqref{eq:lastboundchaos}}\deleted{the above results}. We have that
\begin{equation*}
\begin{aligned}
& b_{\eps}^{-2} C_\beta^{(3)} \int_{y \in \R^2} \int_{y' \in \R^2} 
\rho_{\eps^2s}(x, y)
\rho_{\eps^2s}(x, y') \mathcal{C}_{\eps^{-1}y,\eps^{-1}y'}^{\eps} \dd y \dd y' \\ &\leq b_{\eps}^{-2} C_\beta^{(3)} C_{h_0}  a_\eps \sum_{n=1}^{\infty} \left(\frac{\beta^2}{2\pi}\right)^{n}\left(1+a_{\eps}+\frac{C_0}{\log \eps^{-1}}\right)^{n-1}.
\end{aligned}
\end{equation*}
Since $a_\eps \to 0$ and $\frac{C_0}{\log \eps^{-1}} \to 0$ as $\eps \to 0$, the above sum is bounded when $\eps$ is sufficiently small and $\beta < \sqrt{2\pi}$. In particular, there exist constants $C_\beta^{(4)}>0$ and $\eps_0>0$, such that when $0< \eps < \eps_0$, the infinite sum equals to \[A_{\eps} :=\sum_{n=1}^{\infty} \left(\frac{\beta^2}{2\pi}\right)^{n}\left(1+a_{\eps}+\frac{C_0}{\log \eps^{-1}}\right)^{n-1} =  \frac{\frac{\beta^2}{2\pi}}{1- (\frac{\beta^2}{2\pi})(1+a_{\eps}+\frac{C_0}{\log \eps^{-1}})} \leq C_\beta^{(4)}.\]

Now by \eqref{eq:varcon2leq} and \eqref{eq:expcondvar2}, we have 
\begin{equation}
\begin{aligned}
&\E \left[\mathbf{{V}ar}^{\mathscr{F}_{s}} \left\{  \frac{ \EE_{\eps^{-1}x}\big[\Psi_{\eps^{-2}t}^{\beta_{\eps}, h_0}\big]}{\EE_{\eps^{-1}x}\big[\Psi_{s}^{\beta_{\eps}, 0}\big]} \right\}; \EE_{\eps^{-1}x}\big[\Psi_{s}^{\beta_{\eps}, 0}\big] > b_{\eps} \right] \leq  C_{h_0} C_{\beta}^{(3)} C_\beta^{(4)}a_\eps b_{\eps}^{-2}.
\label{eq:condmean2}\nonumber
\end{aligned}
\end{equation}

Together with $\eqref{eq:condmean1}$, we conclude that when $\eps < \eps_0$, 
\begin{equation*}
\begin{aligned}
& \E \left[\mathbf{{V}ar}^{\mathscr{F}_{s}} \left\{  \frac{ \EE_{\eps^{-1}x}\big[\Psi_{\eps^{-2}t}^{\beta_{\eps}, h_0}\big]}{\EE_{\eps^{-1}x}\big[\Psi_{s}^{\beta_{\eps}, 0}\big]} \right\}\right] \\ &=  \E \left[\mathbf{{V}ar}^{\mathscr{F}_{s}} \left\{  \frac{ \EE_{\eps^{-1}x}\big[\Psi_{\eps^{-2}t}^{\beta_{\eps}, h_0}\big]}{\EE_{\eps^{-1}x}\big[\Psi_{s}^{\beta_{\eps}, 0}\big]} \right\}; \EE_{\eps^{-1}x}\big[\Psi_{s}^{\beta_{\eps}, 0}\big] \leq b_{\eps} \right] + \E \left[\mathbf{{V}ar}^{\mathscr{F}_{s}} \left\{  \frac{ \EE_{\eps^{-1}x}\big[\Psi_{\eps^{-2}t}^{\beta_{\eps}, h_0}\big]}{\EE_{\eps^{-1}x}\big[\Psi_{s}^{\beta_{\eps}, 0}\big]} \right\}; \EE_{\eps^{-1}x}\big[\Psi_{s}^{\beta_{\eps}, 0}\big] > b_{\eps} \right] 
 \\
& \leq C_{\beta}^{(1)}C_{\beta}^{(2)}   b_{\eps} + C_{h_0}  C_{\beta}^{(3)}C_\beta^{(4)} a_\eps b_\eps^{-2}.
\end{aligned}
\end{equation*}
By choosing $b_{\eps} = a_{\eps}^{1/3} $, Lemma~\ref{le:condvar} is proved.

\bibliographystyle{plain}
\bibliography{2dmesoscopicref}

\end{document}